\newcommand{\Cb}{{\mathbb C}}
\newcommand{\Rb}{{\mathbb R}}
\newcommand{\Zb}{{\mathbb Z}}
\newcommand{\Nb}{{\mathbb N}}
\newcommand{\Tb}{{\mathbb T}}
\newcommand{\cA}{{\mathcal A}}
\newcommand{\cE}{{\mathcal E}}
\newcommand{\fg}{{\mathfrak g}}
\newcommand{\fk}{{\mathfrak k}}
\newcommand{\fp}{{\mathfrak p}}
\newcommand{\GL}{{\rm GL}}
\newcommand{\sa}{{\rm sa}}
\newcommand{\red}{{\rm r}}
\newcommand{\Ad}{{\rm Ad}}
\newcommand{\rb}{{\rm b}}
\newcommand{\distnu}{{\rm dist_{nu}}}
\newcommand{\dist}{{\rm dist}}
\newcommand{\prox}{{\rm prox}}
\newcommand{\UCP}{{\rm UCP}}
\newcommand{\rH}{{\rm H}}
\newcommand{\pa}{\|}
\newtheorem{theorem}{Theorem}[section]
\newtheorem{lemma}[theorem]{Lemma}
\newtheorem{proposition}[theorem]{Proposition}
\theoremstyle{definition}
\newtheorem{example}[theorem]{Example}
\begin{document}

\title{Metric aspects of noncommutative homogeneous spaces}
\dedicatory{Dedicated to Marc A. Rieffel \\
in honor of his seventieth birthday}
\author{Hanfeng Li}
\thanks{Partially supported by NSF Grant DMS-0701414.}
\address{Department of Mathematics \\
SUNY at Buffalo \\
Buffalo, NY 14260-2900, U.S.A.} \email{hfli@math.buffalo.edu}
\date{May 21, 2009}

\subjclass[2000]{Primary 46L87; Secondary 53C23, 46L57}

\begin{abstract}
For a closed cocompact subgroup $\Gamma$ of a locally compact group $G$,
given a compact abelian subgroup $K$ of $G$ and a homomorphism $\rho:\hat{K}\rightarrow G$
satisfying certain conditions,
Landstad and Raeburn constructed equivariant
noncommutative deformations $C^*(\hat{G}/\Gamma, \rho)$
of the homogeneous space $G/\Gamma$, generalizing Rieffel's construction
of quantum Heisenberg manifolds. We show that when $G$ is a Lie group and $G/\Gamma$ is connected,
given any norm on the Lie algebra of $G$, the seminorm on $C^*(\hat{G}/\Gamma, \rho)$
induced by the derivation map of the canonical $G$-action defines a compact quantum metric.
Furthermore, it is shown that  this compact quantum metric space depends on $\rho$ continuously,
with respect to quantum Gromov-Hausdorff distances.
\end{abstract}

\maketitle


\section{Introduction}
\label{intro:sec}

In recent years, the quantum Heisenberg manifolds have received
quite some attention. These interesting $C^*$-algebras were constructed by Rieffel \cite{Rieffel89}
as deformation quantizations of the Heisenberg manifolds, and carry natural
actions of the Heisenberg group. The classification of these $C^*$-algebras
up to isomorphism (in most cases) and Morita equivalence (in all cases) has been achieved
by Abadie and her collaborators \cite{Abadie95, Abadie00, Abadie05, AE97}.
These $C^*$-algebras also appear in the work of Connes and Dubois-Violette on noncommutative $3$-spheres \cite{CD03, CD08}.

Aiming partly at giving a mathematical foundation for various approximations in the string theory,
such as the fuzzy spheres, namely the matrix algebras $M_n(\Cb)$, converging to the $2$-sphere $S^2$,
Rieffel developed a theory of compact quantum metric spaces and quantum Gromov-Hausdorff distance
between them \cite{Rieffel00, Rieffel01, Rieffel03}. As the information of the metric
on a compact metric space $X$ is encoded in the Lipschitz seminorm on the algebra of continuous
functions on  $X$, a quantum metric on (the compact quantum space represented by) a unital $C^*$-algebra $A$
is a (possibly $+\infty$-valued) seminorm on $A$ satisfying suitable conditions (see Section~\ref{action:sec} below
for detail).

One important class of examples of compact quantum metric spaces comes from ergodic actions
of a compact group $G$ on a unital $C^*$-algebra $A$, which should be thought of
as the translation action of $G$ on a noncommutative homogeneous space of $G$.
Given any length function on $G$,
such an ergodic action induces a quantum metric on $A$ \cite{Rieffel98} (see \cite{Li08} for
a generalization to ergodic actions of co-amenable compact quantum groups).
This class of examples includes the (fuzzy) spheres above and the noncommutative tori.
When $G$ is a compact connected Lie group and the length function comes from
the geodesic distance associated to some bi-invariant Riemannian metric on $G$,
this seminorm can also be defined in terms of the derivation map on the space
of once differentiable elements of $A$ with respect to the $G$-action \cite[Proposition 8.6]{Rieffel00}.
Explicitly, denote by $\sigma_X(b)$ the derivation of a once differentiable element $b$ of $A$ with respect to
an element $X$ of the Lie algebra $\fg$ of $G$ (see Section~\ref{der:sec} below for detail).
Then the seminorm $L(b)$ is defined as the norm of
the linear map $\fg\rightarrow A$ sending $X$ to $\sigma_X(b)$ when $b$ is once differentiable,
or $\infty$ otherwise.

It is natural to ask what conditions are needed to guarantee that $L$ defined above gives rise
to a quantum metric when $G$ is not compact. Rieffel raised the question about the quantum Heisenberg
manifolds in \cite{Rieffel03}. In \cite{Weaver} Weaver studied some sub-Riemannian metric on the quantum
Heisenberg manifolds, which does not quite fit into the above framework.
In \cite{Chakraborty} Chakraborty showed that certain seminorm associated to
some $\ell^1$-norm does define a quantum metric on the quantum Heisenberg manifolds. Since the $\ell^1$-norm
is bigger than the $C^*$-norm, this seminorm is bigger than the seminorm $L$ defined above. Thus
the result in \cite{Chakraborty} is weaker than what Riffel's question asks for.

Our first main result in this article is  an affirmative answer to Rieffel's question. In fact, we shall deal more generally
with Landstad and Raeburn's noncommutative homogeneous spaces.
In \cite{LR97} Landstad and Raeburn generalized Rieffel's construction to obtain equivariant deformations
of compact homogeneous spaces $G/\Gamma$, starting from a locally compact group $G$,
a closed cocompact subgroup $\Gamma$ of $G$, a compact abelian subgroup $K$ of $G$, and a homomorphism $\rho:\hat{K}\rightarrow G$
satisfying certain conditions. These $C^*$-algebras were denoted by $C^*_{\red}(\hat{G}/\Gamma, \rho)$
and were further studied in \cite{Landstad}. We shall see in Proposition~\ref{Fell:prop} below that
these algebras coincide with certain universal $C^*$-algebras, which we denote by $C^*(\hat{G}/\Gamma, \rho)$.
For our result to be valid for these algebras, we shall assume conditions (S1)-(S5) (see Sections~\ref{nh:sec}, \ref{der:sec}, and \ref{nondeformed:sec} below). Among these conditions, (S1)-(S3) are essentially the same but slightly weaker than the conditions
of Landstad and Raeburn. The conditions (S4) and (S5)  are just that $G$ is a Lie group and $G/\Gamma$ is connected.

\begin{theorem} \label{Lip:thm}
Let $G, \Gamma, K$ and $\rho$ satisfy the conditions (S1)-(S5).
Fix a norm on the Lie algebra $\fg$ of $G$.
Denote by $L_{\rho}$ the seminorm on $C^*(\hat{G}/\Gamma, \rho)$ defined above for
the canonical action $\alpha$ of $G$ on $C^*(\hat{G}/\Gamma, \rho)$.
Then $(C^*(\hat{G}/\Gamma, \rho), L_{\rho})$ is a $C^*$-algebraic compact
quantum metric space.
\end{theorem}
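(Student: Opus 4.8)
The strategy is to verify the two defining properties of a compact quantum metric space: that the null space of $L_\rho$ is exactly the scalars $\Cb 1$, and that the induced metric on the state space $S(C^*(\hat G/\Gamma,\rho))$ metrizes the weak-$*$ topology (equivalently, by Rieffel's criterion, that the unit ball $\{b : L_\rho(b) \le 1, \|b\|\le 1\}$ is totally bounded in $A/\Cb 1$, or that $\{b : L_\rho(b)\le 1\}$ has image with finite radius and is totally bounded modulo scalars). Both pieces should be extracted from the structure of $C^*(\hat G/\Gamma,\rho)$ as a deformation: I expect the earlier sections to exhibit $A = C^*(\hat G/\Gamma,\rho)$ together with a canonical $G$-action $\alpha$ and, crucially, a dense $\alpha$-invariant $*$-subalgebra $\mathcal A$ of smooth vectors that is "small" in the way the smooth functions on a compact homogeneous space are.

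First I would establish that $L_\rho$ vanishes only on scalars. If $L_\rho(b)=0$ then $\sigma_X(b)=0$ for all $X\in\fg$, so $b$ is fixed by the connected component of $G$; since $G/\Gamma$ is connected (S5) and the $G$-action on $A$ is the canonical deformed translation action, the fixed-point algebra for the identity component acting on $A$ should be $\Cb 1$ — this is the noncommutative reflection of the fact that a $G_0$-invariant continuous function on the connected space $G/\Gamma$ is constant. I would prove this by looking at the action on the natural generators of $A$ (the images of $C_c(G/\Gamma)$-type elements twisted by $\rho$): the $K$-grading/spectral decomposition coming from $\hat K$ reduces the statement to the classical connectedness of $G/\Gamma$, which is hypothesis (S5).

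Next, and this is where the real work lies, I would prove the total boundedness of the Lip-ball modulo scalars. The plan is to reduce to the undeformed case $\rho = 1$, where $A_1 = C(G/\Gamma)$ and $L_1$ is the classical Lipschitz seminorm for the Riemannian metric on the compact manifold $G/\Gamma$ — which is a compact quantum (indeed classical) metric space by Rieffel's foundational results. The bridge is that $C^*(\hat G/\Gamma,\rho)$ and $C^*(\hat G/\Gamma,1)$ share the same underlying $K\times\widehat K$-decomposition: each is a completion of the same $*$-algebra of "matrix coefficients," the deformation only twisting the multiplication, and the $G$-action (hence $\sigma_X$) acts compatibly on corresponding spectral subspaces with norms uniformly comparable. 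Concretely, I would decompose $b\in A$ with $L_\rho(b)\le 1$ along the dual action of $\widehat{K}$ (or the $K$-action), control each homogeneous component's norm by the classical estimates applied to the corresponding component in $C(G/\Gamma)$, and show a uniform tail estimate: the high-frequency part is small uniformly in $\rho$. Truncating to finitely many frequencies leaves a finite-dimensional (indeed finite-rank, since $G/\Gamma$ is compact) piece, which is automatically totally bounded. Assembling these gives total boundedness of the unit ball.

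The main obstacle I anticipate is precisely this uniform comparison between the deformed and undeformed norms on spectral subspaces, together with the uniform frequency-tail estimate — making precise that the deformation by $\rho$ does not distort the "size" of elements or the action of $\sigma_X$ by more than a controlled factor, uniformly in $\rho$ on compact sets. For a fixed $\rho$ this comparison alone gives Theorem~\ref{Lip:thm}; done with explicit constants that vary continuously in $\rho$, the same estimates feed directly into the continuity statement (the second main result) via quantum Gromov–Hausdorff distance, using the shared spectral-subspace structure to build the requisite bridges or continuous fields. I would also need to confirm that $\mathcal A$ (smooth vectors) is dense in $A$ and that $L_\rho$ is lower semicontinuous — the former from standard smoothing over $G$, the latter from the closedness of the derivations $\sigma_X$ — so that $(A,L_\rho)$ is a genuine (closed) Lip-norm.
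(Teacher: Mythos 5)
Your overall strategy --- grade $C^*(\hat G/\Gamma,\rho)$ over $\hat K$ using the $K$-action, observe that on each spectral subspace the norm and seminorm are the undeformed ones, and combine classical estimates on each homogeneous component with a frequency-tail estimate --- is in essence the paper's route (which runs through a general criterion, Theorem~\ref{action:thm}, in the style of \cite{Li05}). But two steps you leave open are genuine gaps. First, the tail estimate, which you correctly flag as the crux, needs a mechanism: the grading comes from the \emph{right} translation action $\beta$ of $K$, while $L_\rho$ is built from the \emph{left} translation action $\alpha$ of $G$, and there is no a priori relation between the two. The paper's key input is Proposition~\ref{der:prop}: writing $\Ad_x(Y)=\sum_j F_{j,Y}(x)X_j$ for $Y\in\fk$, one gets $\beta_Y(f)=-\sum_j F_{j,Y}*\alpha_{X_j}(f)$, and the coefficient functions $F_{j,Y}$ are \emph{bounded} because they are invariant under right translation by $K$, $\Gamma$ and $\rho(\hat K)$ (all of which commute with $K$) and hence descend to the compact space $G/K\Gamma$. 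This yields $\|\beta_k(a)-a\|\le C\,\ell(k)\,L_\rho(a)$ for $k$ in the identity component of $K$, which is what drives the Fej\'er-type tail bound; without it your ``uniform tail estimate'' has no proof. (Your worry about uniformity in $\rho$ is, by contrast, vacuous for the fixed-$\rho$ theorem: on each $B_s$ the $C^*$-norm is the supremum norm and $\alpha|_{B_s}$ is plain left translation, both literally independent of $\rho$.)

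Second, your claim that truncating to finitely many frequencies leaves a finite-dimensional piece is false: each $B_s$ is an infinite-dimensional space of continuous functions on $G/\Gamma$, and the $\beta$-fixed-point algebra $B_0\cong C(G/K\Gamma)$ is an infinite-dimensional commutative algebra, not $\Cb$. Total boundedness of $\{a\in B_s\mid L_\rho(a)\le 1,\ \|a\|\le r\}$ is obtained instead by identifying $L_{\rho_0}$ on $C(G/\Gamma)$ with the Lipschitz seminorm of the quotient geodesic metric (Proposition~\ref{Lipschitz:prop}, which is where hypothesis (S5) actually enters) and invoking Arzel\`a--Ascoli; the same identification supplies the Lip-norm on the fixed-point algebra and the finite-radius bound. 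With these two repairs your outline matches the paper's proof.
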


Since Rieffel introduced his quantum Gromov-Hausdorff distance in \cite{Rieffel00}, several variations
have appeared \cite{Kerr02, KL, Li06, Li08, Li07, Rieffel08, Wu}. Among these quantum distances, probably the most suitable one
in our current situation is the distance $\distnu$ discussed in \cite[Section 5]{KL}, which is the unital version of the quantum distance
introduced in \cite[Remark 5.5]{Li07}. As pointed out in \cite[Section 5]{KL}, this distance is no less
than the distances introduced in \cite{Kerr02, Rieffel00}. It is also no less than the distances in \cite{Rieffel08} (see Appendix below).
Our second main result says that the compact quantum metric spaces $ (C^*(\hat{G}/\Gamma, \rho), L_{\rho})$ depend on $\rho$ continuously.
Let us mention that among the conditions (S1)-(5), only the conditions (S1) and (S2) involve $\rho$.

\begin{theorem} \label{continuity:thm}
Fix $G$, $\Gamma$, and $K$ so that there exists $\rho$ satisfying the conditions
(S1)-(S5). Denote by $\Omega$ the set of all $\rho$ satisfying the conditions (S1)
and (S2), equipped with the weakest topology making the maps $\Omega\to G$
sending $\rho$ to $\rho(s)$ to be continuous for each $s\in \hat{K}$. Then
$\Omega$ is a locally compact metrizable space.
Fix a norm on the Lie algebra $\fg$ of $G$. Then for any $\rho'\in \Omega$,
$\distnu(C^*(\hat{G}/\Gamma, \rho), C^*(\hat{G}/\Gamma, \rho'))\to 0$
as $\rho\to \rho'$.
\end{theorem}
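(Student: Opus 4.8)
The plan is to reduce the continuity statement to a single uniform approximation estimate, using a common ``model space'' that carries all the deformations simultaneously. First I would address the topological claim: the map $\Omega \to \prod_{s \in \hat K} G$ given by $\rho \mapsto (\rho(s))_s$ is, by definition of the topology on $\Omega$, a homeomorphism onto its image, and since $\hat K$ is countable (as $K$ is compact abelian, $\hat K$ is discrete, and here it must be countable for $G$ a second-countable Lie group), this product is metrizable; the conditions (S1) and (S2) are closed conditions (they are expressed through continuous functions of the $\rho(s)$ and algebraic relations), so $\Omega$ is a closed subspace of a metrizable space. Local compactness would follow by showing $\Omega$ is locally closed in a compact metrizable space, or more directly that near any $\rho' \in \Omega$ the relevant $\rho(s)$ are confined to a compact subset of $G$ by the conditions that make the deformation well-defined. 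I would carry this out first since it is largely bookkeeping.

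The heart of the matter is the metric estimate. Following Rieffel's and Kerr--Li's strategy for $\distnu$, I would construct, for $\rho$ near $\rho'$, a unital $C^*$-algebra $B$ together with unital completely positive maps, or better, embeddings of both $C^*(\hat G/\Gamma,\rho)$ and $C^*(\hat G/\Gamma,\rho')$ into $B$ that are ``almost isometric'' for the Lip-norms $L_\rho, L_{\rho'}$, with the error tending to $0$ as $\rho \to \rho'$. The natural candidate for $B$ is built from the fixed underlying space: all the algebras $C^*(\hat G/\Gamma,\rho)$ are, as in Landstad--Raeburn, generated by a fixed $C_0(G/\Gamma)$-module structure with a twisted $\hat K$-action, so one can realize them on a common Hilbert module, or use a continuous field of $C^*$-algebras over $\Omega$ whose fibers are the $C^*(\hat G/\Gamma,\rho)$ — the continuity of $\rho \mapsto \rho(s)$ is exactly what makes the structure maps (the twist cocycles) norm-continuous in $\rho$. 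On a dense subalgebra — the image of $C_c(G)$ or the span of the ``matrix coefficients'' implementing the $K$-isotypic decomposition — one has canonical identifications $\Phi_{\rho,\rho'}$ between the dense subalgebras of the two fibers, and I would estimate both $\|\Phi_{\rho,\rho'}(a)\|_\rho - \|a\|_{\rho'}$ and $L_\rho(\Phi_{\rho,\rho'}(a)) - L_{\rho'}(a)$ on such $a$, using that the canonical $G$-action $\alpha$ is implemented the same way on each fiber (it acts on the $G/\Gamma$ variable) so that $\sigma_X$ is essentially $\rho$-independent on matrix coefficients, while the dependence on $\rho$ enters only through the twist in the product and the norm. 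Combined with the compactness of the relevant piece of $G$ (so that on each $K$-isotypic component the estimates are uniform) and the fact that $L_\rho$ has bounded ``radius'' uniformly near $\rho'$ by Theorem~\ref{Lip:thm} applied along a compact neighborhood, this yields the bound on $\distnu$.

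The main obstacle I anticipate is the \emph{uniformity} of the approximation over the full (infinite-dimensional) algebra, not just on individual matrix coefficients. Because $\distnu$ is computed via the order-unit (or $C^*$) structure on \emph{all} of the Lip-unit ball, I must control the error on elements that spread across infinitely many $K$-isotypic components; the standard device is to truncate via a fixed finite-dimensional approximate identity coming from convolution on $K$ (a ``Cesàro/Fejér'' smoothing), show this truncation is $L_\rho$-uniformly almost-identity with a rate independent of $\rho$ near $\rho'$ (this is where conditions (S3)--(S5), especially connectedness of $G/\Gamma$ and $G$ being a Lie group, re-enter, guaranteeing the Lip-ball is totally bounded \emph{uniformly}), and only then apply the matrix-coefficient estimates on the truncated part. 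Making the truncation rate genuinely uniform in $\rho$ — equivalently, establishing an equicontinuity/total-boundedness statement for the family $\{(C^*(\hat G/\Gamma,\rho), L_\rho) : \rho \in \text{compact} \subset \Omega\}$ — is the technical crux, and I expect it to occupy the bulk of the argument; everything else is then a routine assembly into the definition of $\distnu$ from \cite[Section 5]{KL}.
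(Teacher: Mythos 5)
Your plan reproduces the architecture of the paper's proof quite closely: the topological statement via the fact that $\hat{K}$ is finitely generated (so that $\Omega$ embeds as a closed subset of a finite power of $G$); the continuous field of $C^*$-algebras over $\Omega$ whose structure maps vary norm-continuously with $\rho$ (Proposition~\ref{cont field:prop}); a truncation to finitely many $K$-isotypic components with a rate uniform in $\rho$ (which the paper gets from Lemma~\ref{finite approx:lemma}, whose constants $C$, $C_s$ and length function $\ell$ do not depend on $\rho$); and continuity of $\rho\mapsto \|a\|_\rho$ and $\rho\mapsto L_\rho(a)$ on finite isotypic sums, combined with the $\rho$-independent total boundedness of the truncated Lip-balls in each $B_s$. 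You have also correctly identified the uniformity of the truncation as the technical crux.

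There is, however, one concrete gap: you do not actually produce the common unital $C^*$-algebra $B$ into which both $C^*(\hat{G}/\Gamma,\rho)$ and $C^*(\hat{G}/\Gamma,\rho')$ must be isometrically embedded before $\distnu$ can even be estimated, and both of your suggested routes are problematic as stated. Realizing all the algebras ``on a common Hilbert module'' presumably means using the defining representations $V_\rho$ on $L^2(G)$; but the formula for $V_\rho$ involves the operators $L_{\rho(s)}$ from the left regular representation, which depends on $\rho(s)$ only strongly and not in norm, so $\|V_\rho(f)-V_{\rho'}(f)\|_{B(L^2(G))}$ need not become small as $\rho\to\rho'$ and this embedding is useless for the Hausdorff estimate. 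A continuous field, on the other hand, does not by itself embed its fibers into a common algebra (evaluation goes the wrong way). The paper bridges this by invoking Blanchard's subtrivialization theorem for separable continuous fields of nuclear $C^*$-algebras \cite{Blanchard}, which is precisely where nuclearity of $C^*(\hat{G}/\Gamma,\rho)$ (Proposition~\ref{nuclear:prop}) and separability enter; this step is genuinely needed and is absent from your outline. A secondary, smaller omission: in the direction where you approximate elements of $\cE(C^*(\hat{G}/\Gamma,\rho))$ (with $\rho$ varying) by elements of the fixed fiber $\rho'$, the limit object a priori only satisfies the difference-quotient bound $L''_{\rho'}\le 1$ rather than lying in $C^1$, and one must smooth it (Lemma~\ref{approximate:lemma}) to land in the Lip-ball; your ``routine assembly'' should account for this.
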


This paper is organized as follows. In Section~\ref{nh:sec} we recall
Landstad and Raeburn's construction of noncommutative homogeneous spaces,
and establish some general properties of these noncommutative spaces.
The relation between the derivations coming from two canonical group
actions on $C^*(\hat{G}/\Gamma, \rho)$ is established in Section~\ref{der:sec}.
In Section~\ref{nondeformed:sec} we show that in the nondeformed
case $L_{\rho}$ is essentially the Lipschitz seminorm corresponding to
some metric on $G/\Gamma$. A general result of establishing certain seminorm being a quantum metric
by the help of a compact group action is proved in Section~\ref{action:sec}.
Theorems~\ref{Lip:thm} and \ref{continuity:thm} are proved
in Sections~\ref{Lip:sec} and \ref{GH:sec} respectively. In an appendix
we compare the distance $\distnu$ and the proximity Rieffel introduced
in \cite{Rieffel08}.

\noindent{\it Acknowledgements.} I am grateful to Wei Wu for comments.

\section{Noncommutative homogeneous spaces}
\label{nh:sec}

In this section we recall Landstad and Raeburn's construction of noncommutative deformations of homogeneous spaces,
discuss some examples, and establish some general properties of these noncommutative homogeneous spaces. These properties
are of independent interest themselves.

Let $G$ be a locally compact group. Throughout this paper,
we make the following
standard assumptions:
\begin{enumerate}
\item[(S1)] $K$ is a compact abelian subgroup of $G$, and $\rho: \hat{K}\rightarrow
G$ is a group homomorphism from its Pontryagin dual $\hat{K}$ into $G$
such that $\rho(\hat{K})$ commutes with $K$.

\item[(S2)] $\Gamma$ is a closed subgroup of $G$ commuting with $K$ and
satisfies
\begin{eqnarray*}
\Omega_{\gamma}(s)&:=&\gamma\rho(s)\gamma^{-1}\rho(-s) \mbox{ is in } K  \mbox{ for all } s\in \hat{K}, \gamma\in \Gamma \mbox{ and }\\
\left<\Omega_{\gamma}(s), t\right>&=&\left<\Omega_{\gamma}(t), s\right> \mbox{ for all } s, t\in \hat{K}, \gamma\in \Gamma,
\end{eqnarray*}
where $\left< \cdot, \cdot\right>$ denotes the canonical pairing between $K$ and $\hat{K}$.
\end{enumerate}

Denote by $C_{\rb}(G)$ the Banach algebra of bounded continuous $\Cb$-valued functions on $G$, equipped with
the pointwise multiplication and the supremum norm.
Endow $K$ with its normalized Haar measure. Consider the
action of $K$ on $C_{\rb}(G)$ induced by the
right multiplication of $K$ on $G$.
For $f\in C_{\rb}(G)$, let $f_s\in C_{\rb}(G)$ for $s\in \hat{K}$ be the partial
Fourier transform defined by $f_s(x):=\int_K\overline{<k, s>}f(xk)\,dk$ for
$x\in G$ (this is denoted by $\hat{f}(x, s)$ in (1.3) of \cite{LR97}).
Note that although the action of $K$ on $C_{\rb}(G)$ may not be strongly continuous, we do have
$f_s\in C_{\rb}(G)$.
Then
\begin{eqnarray*}
C_{\rb, 1}(G):=\{f\in C_{\rb}(G)| \quad \pa f\pa_{\infty, 1}:=\sum_{s\in \hat{K}}\pa f_s\pa<\infty\}
\end{eqnarray*}
is a Banach $*$-algebra \cite[Proposition 5.2]{LR95}
with norm $\pa\cdot \pa_{\infty, 1}$ and operations
\begin{eqnarray} \label{b, 1:eq}
f*g(x)&=&\sum_{s, t}f_s(x\rho(t))g_t(x\rho(-s)), \\
\label{b, 1*:eq}
f^*(x)&=&\overline{f(x)}.
\end{eqnarray}
Fix a left invariant Haar measure on  $G$.
For each $s\in \hat{K}$ denote by $P_s$ the projection on $L^2(G)$ corresponding
to the restriction of the left regular representation $L|_K$ of $K$ in $L^2(G)$, i.e.,
\begin{eqnarray*}
P_s=\int_K\overline{\left<k, s\right>}L_k\, dk,
\end{eqnarray*}
where $L_y\xi(x)=\xi(y^{-1}x)$ for $\xi\in L^2(G)$, $x, y\in G$.
Then $C_{\rb,1}(G)$ has a faithful $*$-representation
$V$ on $L^2(G)$ \cite[Proposition 1.3]{LR97} given by
\begin{eqnarray} \label{mu:eq}
V(f)=\sum_{s, t}P_tL_{\rho(s)}M(f)L_{\rho(-t)}P_s,
\end{eqnarray}
where $M$ is the representation of $C_{\rb}(G)$ on $L^2(G)$ given
by $M(f)\xi(x)=f(x^{-1})\xi(x)$.
Denote by $C_0(G/\Gamma)$ the $C^*$-algebra of continuous $\Cb$-valued functions
on $G/\Gamma$ vanishing at $\infty$, and think of it as a $C^*$-subalgebra of $C_{\rb}(G)$
via the quotient map $G\rightarrow G/\Gamma$.
The space $C_{0, 1}(G/\Gamma, \rho):=C_0(G/\Gamma)\cap C_{\rb,1}(G, \rho)$
is a closed $*$-subalgebra of $C_{\rb,1}(G, \rho)$,
and the noncommutative homogeneous space
$C^*_{\red}(\hat{G}/\Gamma, \rho)$ of Landstad and Raeburn is defined as
the closure of $V(C_{0, 1}(G/\Gamma, \rho))$ \cite[Theorem 4.3]{LR97}.

Clearly the left translations $\alpha_y$ defined by $\alpha_y(f)(x)=f(y^{-1}x)$ for
$y\in G$
extend to isometric $*$-automorphisms of
$C_{0, 1}(G/\Gamma, \rho)$. They also extend to $*$-automorphisms
of $C^*_{\red}(\hat{G}/\Gamma, \rho)$ \cite[Theorem 4.3]{LR97}. We shall see later that this action of $G$ on $C^*_{\red}(\hat{G}/\Gamma, \rho)$
is strongly continuous.

Before discussing properties of these noncommutative homogeneous spaces, let us look at some examples.

\begin{example} \label{Quantum Heisenberg manifold}
Let $H_1$ be the $3$-dimensional Heisenberg group
consisting of matrices of the form

\begin{eqnarray*}
\begin{pmatrix} 1 & y & z  \\ 0 &  1 & x \\ 0 & 0 & 1
\end{pmatrix}
\end{eqnarray*}
as a subgroup of $\GL(3, \Rb)$. Denote by  $Z$ the subgroup consisting of
elements with
$x=y=0$ and $z\in \Zb$.
Then we can write the elements of $G:=H_1/Z$ as $(x, y, e^{2\pi i z})$
for $x, y, z\in \Rb$.
Fix a positive integer $c$.
Take
\begin{eqnarray*}
\Gamma=\{(x, y, e^{2\pi iz})\in G| x, y, cz\in \Zb\},\quad  K=\{(0, 0, e^{2\pi i z})\in G| z\in \Rb\}.
\end{eqnarray*}
Take $\mu, \nu \in \Rb$ and define $\rho:\Zb=\hat{K}\rightarrow G$ by
\begin{eqnarray*}
\rho(s)=(s\mu, s\nu, e^{\pi i s^2\mu\cdot\nu}).
\end{eqnarray*}
The $C^*$-algebra $C^*_{\red}(\hat{G}/\Gamma, \rho)$ is isomorphic to
Rieffel's quantum Heisenberg manifold $D_1$ in \cite[Theorem 5.5]{Rieffel89} (see \cite[page 493]{LR97}).
\end{example}

\begin{example}(cf. \cite[Example 4.17]{LR97}) \label{Higher Diemnsional Quantum Heisenberg manifold}
Let $H_n$ be the $2n+1$-dimensional Heisenberg group
consisting of matrices of the form

\begin{eqnarray*}
\begin{pmatrix} 1 & y_1 & y_2 &\cdots &y_n & z  \\ 0 &  1 & 0&\cdots & 0 &x_1 \\ 0 & 0 & 1& \cdots  & 0 & x_2\\
\cdots & \cdots & \cdots & \cdots & \cdots & \cdots
\\0 &0& 0&\cdots & 0& 1
\end{pmatrix}
\end{eqnarray*}
as a subgroup of $\GL(n+2, \Rb)$. Denote by  $Z$ the subgroup consisting of
elements with
$x_1= \cdots= x_n=y_1=\cdots=y_n=0$ and $z\in \Zb$.
Then we can write the elements of $G:=H_n/Z$ as $(x, y, e^{2\pi i z})$
for $x, y\in \Rb^n$ and $z\in \Rb$.
Fix positive integers $b_1, \dots, b_n, d_1, \dots, d_n$ and $c$
such that $b_jd_j|c$ for all $j$. Set $b=(b_1, \dots, b_n)$ and $d=(d_1, \dots, d_n)\in \Zb^n$.
Take
\begin{eqnarray*}
\Gamma=\{(x, y, e^{2\pi iz})\in G| b\cdot x, d\cdot y, cz\in \Zb\},\quad  K=\{(0, 0, e^{2\pi i z})\in G| z\in \Rb\}.
\end{eqnarray*}
Take $\mu, \nu \in \Rb^n$ and define $\rho:\Zb=\hat{K}\rightarrow G$ by
\begin{eqnarray*}
\rho(s)=(s\mu, s\nu, e^{\pi i s^2\mu\cdot\nu}).
\end{eqnarray*}
The $C^*$-algebra $C^*_{\red}(\hat{G}/\Gamma, \rho)$ is a higher-dimensional generalization
of Example~\ref{Quantum Heisenberg manifold}.
\end{example}

\begin{example}
Let $n\ge 3$.
Let $W$ be the subgroup of $\GL(n,\Zb)$
consisting of upper triangular matrices $(a_{j,l})$
with diagonal entries all being $1$.
Denote by  $Z$ the subgroup consisting of
matrices whose entries are all $0$ except diagonal ones being $1$ and
$a_{1,n}$ being an integer.
Then we can write the elements of $G:=W/Z$ as $(a_{j, l})$  with $a_{1, n}\in \Tb$.
Fix a positive integer $c$.
Take
\begin{eqnarray*}
\Gamma&=&\{(a_{j,l})\in G| a^c_{1, n}=1 \mbox{ and } a_{j, l}\in \Zb \mbox{ if } (j, l)\neq (1, n)\},\\
  K&=&\{(a_{j,l})\in G| a_{j,l}=0\quad  \mbox{ if }\quad  j<l \quad \mbox{ and } \quad (j,l)\neq(1,n)\}.
\end{eqnarray*}
Take $\mu, \nu \in \Rb$ and define $\rho:\Zb=\hat{K}\rightarrow G$ by
\begin{eqnarray*}
(\rho(s))_{j,l}=\left\{ \begin{array}{ll}
                   s\mu, & \mbox{ if }  (j,l)=(2,n), \\
                   s\nu, & \mbox{ if } (j, l)=(1, n-1), \\
                   e^{\pi i s^2\mu\cdot\nu}, &  \mbox{ if } (j, l)=(1, n), \\
                   0, & \mbox{ for other } j<l.
\end{array}
\right.
\end{eqnarray*}
For $n=3$ we get the quantum Heisenberg manifold in Example~\ref{Quantum Heisenberg manifold} again.
\end{example}

In the rest of this section  we establish some properties of $C^*_{\red}(\hat{G}/\Gamma, \rho)$.
Denote by $C^*(\hat{G}/\Gamma, \rho)$ the enveloping $C^*$-algebra of
the Banach $*$-algebra $C_{0, 1}(G/\Gamma, \rho)$ \cite[page 42]{Takesaki}.
By the universality of $C^*(\hat{G}/\Gamma, \rho)$ there is a canonical surjective
$*$-homomorphism $C^*(\hat{G}/\Gamma, \rho)\rightarrow
C^*_{\red}(\hat{G}/\Gamma, \rho)$ such that
the diagram
\begin{eqnarray*}
\xymatrix{
C_{0, 1}(G/\Gamma, \rho)  \ar[rd] \ar[r]
& C^*(\hat{G}/\Gamma, \rho) \ar[d]\\
& C^*_{\red}(\hat{G}/\Gamma, \rho)
}
\end{eqnarray*}
commutes.

Clearly
the right translations $\beta_k(f)(x)=f(xk)$ for $k\in K$
extend to isometric $*$-automorphisms of
$C_{0, 1}(G/\Gamma, \rho)$. Recall the action $\alpha$ of $G$ on $C_{0, 1}(G/\Gamma, \rho)$ defined before Example~\ref{Quantum Heisenberg manifold}.
Then $\alpha$ and $\beta$ induce actions of $G$ and $K$ on $C^*(\hat{G}/\Gamma, \rho)$ respectively, which we still denote
by $\alpha$ and $\beta$ respectively.
For each $s\in \hat{K}$, set
\begin{eqnarray} \label{Bs:eqn}
B_s:=\{f\in C_0(G/\Gamma)| f=f_s\}.
\end{eqnarray}

\begin{lemma} \label{cont:lemma}
The actions $\alpha$ and $\beta$ of $G$ and $K$ on
$C_{0, 1}(G/\Gamma, \rho)$ ($C^*(\hat{G}/\Gamma, \rho)$ resp.)
commute with each other and   are strongly continuous. The spectral spaces
$\{f\in C_{0, 1}(G/\Gamma, \rho)| \beta_k(f)=\left<k, s\right>f \mbox{ for all } k\in K\}$
and $\{a\in C^*(\hat{G}/\Gamma, \rho)| \beta_k(a)=\left<k, s\right>a \mbox{ for all } k\in K\}$
of $\beta$ corresponding to $s\in \hat{K}$ are exactly $B_s$, and the norm of $B_s$ in $C_{0, 1}(G/\Gamma, \rho)$
and $C^*(\hat{G}/\Gamma, \rho)$ is exactly the supremum norm.
\end{lemma}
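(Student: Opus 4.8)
The plan is to reduce everything to elementary identities for the partial Fourier components and then transfer them from the $\pa\cdot\pa_{\infty,1}$-dense $*$-subalgebra $C_{0,1}(G/\Gamma,\rho)$ to $C^*(\hat{G}/\Gamma,\rho)$. Directly from the definitions one gets, for $f\in C_{\rb,1}(G,\rho)$, $y\in G$ and $k\in K$, the identities $(\alpha_y f)_s=\alpha_y(f_s)$, $(\beta_k f)_s=\left<k,s\right>f_s$ and $(f^*)_s=\overline{f_{-s}}$. Using that $K$ commutes with $\rho(\hat{K})$ and with $\Gamma$ (here the first relation of (S2) enters), one then checks: each $B_s$ lies in $C_{0,1}(G/\Gamma,\rho)$ with $\pa f\pa_{\infty,1}=\pa f\pa$ for $f\in B_s$; $B_s^*\subseteq B_{-s}$ and $B_sB_t\subseteq B_{s+t}$ for the product of $C_{0,1}(G/\Gamma,\rho)$; and, for $f\in B_s$, $f^*f=|f(\cdot\,\rho(s))|^2\in B_0$, so $\pa f^*f\pa=\pa f\pa^2$. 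Since that product restricted to $B_0$ is just the pointwise product, $B_0$ with the norm $\pa\cdot\pa_{\infty,1}=\pa\cdot\pa$ is a commutative $C^*$-algebra, a closed $*$-subalgebra of $C_0(G/\Gamma)$. Two facts are then immediate on $C_{0,1}(G/\Gamma,\rho)$: $\alpha_y\beta_k(f)(x)=f(y^{-1}xk)=\beta_k\alpha_y(f)(x)$, so $\alpha$ and $\beta$ commute there; and, comparing Fourier components, $\beta_k(f)=\left<k,s\right>f$ for all $k\in K$ is equivalent to $f_t=0$ for $t\neq s$, i.e.\ to $f\in B_s$, so the $\beta$-spectral subspace of $C_{0,1}(G/\Gamma,\rho)$ for $s$ is exactly $B_s$, with norm the supremum norm.

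For strong continuity I would argue first inside $C_{0,1}(G/\Gamma,\rho)$, where $\pa\alpha_y(f)-f\pa_{\infty,1}=\sum_s\pa\alpha_y(f_s)-f_s\pa$ and $\pa\beta_k(f)-f\pa_{\infty,1}=\sum_s|\left<k,s\right>-1|\,\pa f_s\pa$. Since $\sum_s\pa f_s\pa<\infty$, given $\varepsilon>0$ one discards all but finitely many $s$ and handles the finite remainder using, for $\alpha$, the standard strong continuity of the translation action of $G$ on $C_0(G/\Gamma)$ applied to each $f_s$, and, for $\beta$, the continuity of the characters of $K$; this yields strong continuity of $\alpha$ and $\beta$ on $C_{0,1}(G/\Gamma,\rho)$. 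Since $\alpha_y$ and $\beta_k$ are $*$-automorphisms, hence isometries, of $C^*(\hat{G}/\Gamma,\rho)$, and the canonical map $C_{0,1}(G/\Gamma,\rho)\to C^*(\hat{G}/\Gamma,\rho)$ is norm-decreasing with dense range, an $\varepsilon/3$ argument upgrades strong continuity to $C^*(\hat{G}/\Gamma,\rho)$; the same density argument transfers commutativity of $\alpha$ and $\beta$ to $C^*(\hat{G}/\Gamma,\rho)$.

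It remains to compute the norm and the $\beta$-spectral subspaces inside $C^*(\hat{G}/\Gamma,\rho)$. Since the representation $V$ of $C_{\rb,1}(G,\rho)$ on $L^2(G)$ is faithful, its restriction to $C_{0,1}(G/\Gamma,\rho)$ is faithful, and as this restriction factors through $C^*(\hat{G}/\Gamma,\rho)$ the canonical map $C_{0,1}(G/\Gamma,\rho)\to C^*(\hat{G}/\Gamma,\rho)$ is injective; hence its restriction $B_0\to C^*(\hat{G}/\Gamma,\rho)$ is an injective $*$-homomorphism of $C^*$-algebras, so isometric. Combining this with the $C^*$-identity and $f^*f\in B_0$: for $f\in B_s$ we get $\pa f\pa^2_{C^*(\hat{G}/\Gamma,\rho)}=\pa f^*f\pa_{C^*(\hat{G}/\Gamma,\rho)}=\pa f^*f\pa=\pa f\pa^2$ (the last two being supremum norms), so the norm of $B_s$ in $C^*(\hat{G}/\Gamma,\rho)$ is the supremum norm; in particular $B_s$ is complete, hence closed, in $C^*(\hat{G}/\Gamma,\rho)$. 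Finally the $\beta$-spectral subspace of $C^*(\hat{G}/\Gamma,\rho)$ for $s$ is the range of the contractive projection $E_s=\int_K\overline{\left<k,s\right>}\beta_k(\cdot)\,dk$; it contains $B_s$ by the eigenrelation already valid in $C_{0,1}(G/\Gamma,\rho)$, and since $E_s$ is continuous, maps $C_{0,1}(G/\Gamma,\rho)$ onto $B_s$ (because $E_s(f)=f_s$), and $C_{0,1}(G/\Gamma,\rho)$ is dense, $E_s$ sends $C^*(\hat{G}/\Gamma,\rho)$ into $\overline{B_s}=B_s$, so that subspace is exactly $B_s$.

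The routine ingredients are the Fourier identities and the two $\varepsilon/3$-type continuity arguments. The step I expect to need the most care is the norm computation inside the \emph{universal} algebra $C^*(\hat{G}/\Gamma,\rho)$: it hinges on faithfulness of $V$ --- equivalently, injectivity of $C_{0,1}(G/\Gamma,\rho)\to C^*(\hat{G}/\Gamma,\rho)$ --- to obtain the isometric embedding of the unit fibre $B_0$, and on the $C^*$-identity to bootstrap that from $B_0$ to every $B_s$.
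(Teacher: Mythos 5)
Your proposal is correct and follows essentially the same route as the paper: density of $\oplus_{s\in\hat K}B_s$ in $C_{0,1}(G/\Gamma,\rho)$ via truncation of the $\ell^1$-sum, transfer of strong continuity through the contractive canonical map, injectivity of $C_{0,1}(G/\Gamma,\rho)\to C^*(\hat G/\Gamma,\rho)$ (via faithfulness of $V$) to get isometry on the $C^*$-algebra $B_0$, the identity $\|f^**f\|=\|f\|^2$ to bootstrap to each $B_s$, and the spectral projection $\int_K\overline{\left<k,s\right>}\beta_k(\cdot)\,dk$ together with closedness of $B_s$ to identify the spectral subspaces. You merely spell out some steps the paper labels as clear.
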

\begin{proof}
Clearly $\alpha$ and $\beta$ commute with each other.
It is also clear that $B_s=\{f\in C_{0, 1}(G/\Gamma, \rho)| \beta_k(f)=\left<k, s\right>f \mbox{ for all } k\in K\}$
and that the norm of $B_s$ in $C_{0, 1}(G/\Gamma, \rho)$ is exactly the supremum norm.
It follows that the restrictions of the actions $\alpha$ and $\beta$ on
$B_s\subseteq C_{0, 1}(G/\Gamma, \rho)$ are strongly continuous for each $s\in \hat{K}$.
For any $f\in C_{0, 1}(G/\Gamma, \rho)$, one has $f_s\in B_s$ for each $s\in \hat{K}$.
For any $\varepsilon>0$ take a finite subset $F\subseteq \hat{K}$ such that $\sum_{s\in \hat{K}\setminus F}\pa f_s\pa<\varepsilon$.
Then $\pa f-\sum_{s\in F}f_s\pa_{\infty, 1}=\sum_{s\in \hat{K}\setminus F}\pa f_s\pa<\varepsilon$. Therefore $\oplus_{s\in \hat{K}}B_s$
is dense in $C_{0, 1}(G/\Gamma, \rho)$.  It follows that the actions $\alpha$ and $\beta$
are strongly continuous on $C_{0, 1}(G/\Gamma, \rho)$. Note that the canonical homomorphism
$C_{0, 1}(G/\Gamma, \rho)\rightarrow C^*(\hat{G}/\Gamma, \rho)$ is contractive \cite[Proposition 5.2]{Takesaki}. Consequently,
the induced actions of $\alpha$ and $\beta$ on $C^*(\hat{G}/\Gamma, \rho)$
are also strongly continuous.

Note that the subalgebra $B_0$ of $C_{0, 1}(G/\Gamma, \rho)$ is a $C^*$-algebra, which can be identified with
$C_0(G/K\Gamma)$. Since the natural homomorphism $C_{0, 1}(G/\Gamma, \rho)\rightarrow C^*_{\red}(\hat{G}/\Gamma, \rho)$ is
injective, so is the canonical homomorphism $C_{0, 1}(G/\Gamma, \rho)\rightarrow C^*(\hat{G}/\Gamma, \rho)$. As injective $*$-homomorphisms between
$C^*$-algebras are isometric, we conclude that the homomorphism of $B_0$ into $C^*(\hat{G}/\Gamma, \rho)$ is isometric.
For any $f\in B_s$ one has $f^**f\in B_0$ and the supremum norm of $f^**f$ is equal to the square of the supremum norm
of $f$. It follows that the homomorphism $C_{0, 1}(G/\Gamma, \rho)\rightarrow C^*(\hat{G}/\Gamma, \rho)$
is isometric on $B_s$. In particular, the image of $B_s$ in $C^*(\hat{G}/\Gamma, \rho)$ is closed.

Since the action $\beta$ of $K$ on $C^*(\hat{G}/\Gamma, \rho)$ is strongly continuous, the spectral space
$\{a\in C^*(\hat{G}/\Gamma, \rho)| \beta_k(a)=\left<k, s\right>a \mbox{ for all } k\in K\}$ is the image of the
continuous linear operator $C^*(\hat{G}/\Gamma, \rho)\rightarrow C^*(\hat{G}/\Gamma, \rho)$ sending
$a$ to $\int_K\overline{\left<k, s\right>}\beta_k(a)\,dk$. It follows that the image of $B_s=\{f\in C_{0, 1}(G/\Gamma, \rho)| \beta_k(f)=\left<k, s\right>f \mbox{ for all } k\in K\}$ in $C^*(\hat{G}/\Gamma, \rho)$
is dense in $\{a\in C^*(\hat{G}/\Gamma, \rho)| \beta_k(a)=\left<k, s\right>a \mbox{ for all } k\in K\}$.
Therefore the image of $B_s$ in $C^*(\hat{G}/\Gamma, \rho)$ is exactly $\{a\in C^*(\hat{G}/\Gamma, \rho)| \beta_k(a)=\left<k, s\right>a \mbox{ for all } k\in K\}$.
\end{proof}

We refer the reader to \cite[Chapter 2]{BO} for the basics of nuclear $C^*$-algebras.

\begin{proposition} \label{nuclear:prop}
The $C^*$-algebra $C^*(\hat{G}/\Gamma, \rho)$ is nuclear.
\end{proposition}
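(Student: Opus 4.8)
The plan is to exploit the strongly continuous action $\beta$ of the compact abelian group $K$ on $A:=C^*(\hat{G}/\Gamma,\rho)$ furnished by Lemma~\ref{cont:lemma}, together with its spectral decomposition. By that lemma, the spectral subspace of $\beta$ attached to a character $s\in\hat{K}$ is the isometric copy of $B_s$ inside $A$; the unit fibre $B_0$ is the commutative $C^*$-algebra $C_0(G/K\Gamma)$; the algebraic direct sum $\bigoplus_{s\in\hat{K}}B_s$ is a dense $*$-subalgebra of $A$ with $B_sB_t\subseteq B_{s+t}$, $B_s^*=B_{-s}$ and $B_s^*B_s\subseteq B_0$; and for $f\in B_s$ the element $f^**f$ lies in $B_0$, is positive in $A$, and satisfies $\pa f^**f\pa=\pa f\pa^2$. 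These are exactly the axioms asserting that the family $\cB:=(B_s)_{s\in\hat{K}}$, with multiplication and involution inherited from $C_{0,1}(G/\Gamma,\rho)$ and with the supremum norms, is a Fell bundle over the discrete group $\hat{K}$, which is amenable since it is abelian.

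Next I would note that the inclusions $B_s\hookrightarrow A$ constitute a representation of $\cB$, so by the universal property of the full cross-sectional $C^*$-algebra $C^*(\cB)$ they integrate to a $*$-homomorphism $\pi\colon C^*(\cB)\to A$ that restricts to the inclusion on each $B_s$. The range of $\pi$ is a $C^*$-subalgebra of $A$ containing every $B_s$, hence containing the dense subalgebra $\bigoplus_s B_s$, so $\pi$ is surjective. (In fact $\pi$ is an isomorphism --- this is Proposition~\ref{Fell:prop} --- but only surjectivity is used here.) Thus $A$ is a quotient of $C^*(\cB)$.

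It therefore suffices to prove that $C^*(\cB)$ is nuclear. Its unit fibre $B_0\cong C_0(G/K\Gamma)$ is commutative, hence nuclear \cite{BO}, and $\hat{K}$ is amenable; by the nuclearity of cross-sectional $C^*$-algebras of Fell bundles over discrete amenable groups with nuclear unit fibre --- equivalently, over an amenable base one has $C^*(\cB)=C^*_{\red}(\cB)$ and the latter is nuclear once the unit fibre is --- the algebra $C^*(\cB)$ is nuclear. Since a quotient of a nuclear $C^*$-algebra is nuclear \cite{BO}, we conclude that $A=C^*(\hat{G}/\Gamma,\rho)$ is nuclear.

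The one non-routine ingredient is this last appeal, namely the nuclearity theorem for Fell bundles over amenable discrete groups (due to Exel); the remaining verifications --- the Fell-bundle axioms for $\cB$ and the fact that $A$ is generated by $\bigoplus_s B_s$ --- are immediate from Lemma~\ref{cont:lemma}. If one prefers to avoid Fell-bundle machinery, an alternative is to observe that the dual action of $K$ on $C^*(\cB)$ has fixed-point algebra $B_0$ and, by Takai duality, $(C^*(\cB)\rtimes K)\rtimes\hat{K}\cong C^*(\cB)\otimes\cK(L^2K)$, reducing the matter to nuclearity of $C^*(\cB)\rtimes K$; but proving that uses the same structural input, so I would simply cite the Fell-bundle result.
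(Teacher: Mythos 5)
Your argument is correct, but it follows a different route from the paper's. The paper's proof is a two-line application of Lemma~\ref{cont:lemma}: the action $\beta$ of the compact group $K$ on $C^*(\hat{G}/\Gamma,\rho)$ is strongly continuous with fixed-point algebra $B_0\cong C_0(G/K\Gamma)$, which is commutative hence nuclear, and then the fixed-point criterion of Doplicher--Longo--Roberts--Zsid\'o (a $C^*$-algebra with a strongly continuous compact group action is nuclear if and only if its fixed-point subalgebra is) gives the conclusion directly. You instead realize the algebra as a quotient of the full cross-sectional $C^*$-algebra of the Fell bundle $(B_s)_{s\in\hat{K}}$ over the discrete amenable group $\hat{K}$ and invoke Exel's nuclearity theorem for Fell bundles over amenable groups with nuclear unit fibre, plus the fact that quotients of nuclear algebras are nuclear. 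Both proofs work, and your structural observations (the Fell-bundle axioms, the identification $C^*(\hat{G}/\Gamma,\rho)=C^*(\mathcal{B}^\rho)$) are in fact made in the paper, but only \emph{after} this proposition, in the discussion following Proposition~\ref{Fell:prop}. The paper's route is lighter: it needs only facts already proved in Lemma~\ref{cont:lemma} and a single citation, whereas yours imports the heavier Fell-bundle machinery and an external theorem not in the paper's bibliography. It is also worth noting that in this abelian setting the two arguments are close cousins: the dual (gauge) action of $K=\widehat{\hat{K}}$ on $C^*(\mathcal{B}^\rho)$ has fixed-point algebra $B_0$, so the DLRZ criterion applied to that action is essentially how one proves the Fell-bundle nuclearity statement for abelian base groups --- the paper simply applies it directly to $\beta$, bypassing the bundle. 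What your approach buys is generality (it would survive if $\hat{K}$ were replaced by a nonabelian amenable discrete group, where no compact dual group action is available); what the paper's buys is economy.
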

\begin{proof}
By Lemma~\ref{cont:lemma} the action $\beta$ of $K$ on $C^*(\hat{G}/\Gamma, \rho)$
is strongly continuous, and its fixed-point subalgebra
is $B_0$, a commutative $C^*$-algebra, and hence is nuclear \cite[Proposition 2.4.2]{BO}.
For any $C^*$-algebra carrying a strongly continuous action of a compact group, the algebra is nuclear if and only if
the fixed-point subalgebra is nuclear \cite[Proposition 3.1]{DLRZ}. Consequently, $C^*(\hat{G}/\Gamma, \rho)$ is nuclear.
\end{proof}

We shall need the following well-known fact a few times (see for example \cite[Proposition 4.5.1]{BO}).

\begin{lemma} \label{inj:lemma}
Let $H$ be a compact group, and let $\sigma_j$ be
a strongly continuous action of $H$ on a $C^*$-algebra $A_j$
for $j=1, 2$. Let $\varphi:A_1\rightarrow A_2$ be
an $H$-equivariant $*$-homomorphism. Then
$\varphi$ is injective if and only if the restriction
of $\varphi$ on the fixed-point subalgebra $A^H_1$ is injective.
In particular, if $\varphi$ is surjective and
$\varphi|_{A^H_1}$ is injective, then $\varphi$ is
an isomorphism.
\end{lemma}

\begin{proposition} \label{Fell:prop}
The canonical $*$-homomorphism $C^*(\hat{G}/\Gamma, \rho)\rightarrow
C^*_{\red}(\hat{G}/\Gamma, \rho)$ is an isomorphism.
\end{proposition}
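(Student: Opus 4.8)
The plan is to use Lemma~\ref{inj:lemma} with $H = K$ acting via $\beta$. The canonical $*$-homomorphism $\pi : C^*(\hat{G}/\Gamma, \rho)\to C^*_{\red}(\hat{G}/\Gamma, \rho)$ is surjective by construction, and it is $K$-equivariant because both actions $\beta$ come from the right translation action of $K$ on $C_{0,1}(G/\Gamma,\rho)$ and $\pi$ intertwines the canonical maps out of $C_{0,1}(G/\Gamma,\rho)$. By Lemma~\ref{inj:lemma} it therefore suffices to show that the restriction of $\pi$ to the fixed-point subalgebra $C^*(\hat{G}/\Gamma,\rho)^K$ is injective. Hence the whole problem reduces to understanding these fixed-point subalgebras on both sides.

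Next I would identify the fixed-point subalgebras. On the universal side, Lemma~\ref{cont:lemma} already tells us that the spectral subspace of $\beta$ for $s=0$ inside $C^*(\hat{G}/\Gamma,\rho)$ is exactly the image of $B_0$, and that this image is isometric to $B_0 = C_0(G/K\Gamma)$; since $\beta$ is strongly continuous on $C^*(\hat{G}/\Gamma,\rho)$, the fixed-point subalgebra equals that $s=0$ spectral subspace, so $C^*(\hat{G}/\Gamma,\rho)^K \cong C_0(G/K\Gamma)$. The same reasoning — together with the fact (used in the proof of Lemma~\ref{cont:lemma}) that $C_{0,1}(G/\Gamma,\rho)\to C^*_{\red}(\hat{G}/\Gamma,\rho)$ is injective — shows that $C^*_{\red}(\hat{G}/\Gamma,\rho)^K$ is likewise the closure of the image of $B_0$, again isometrically $C_0(G/K\Gamma)$. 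Under these identifications, $\pi$ restricted to the fixed-point subalgebras is simply the identity map of $C_0(G/K\Gamma)$ (both are the $C^*$-completion of the same copy of $B_0$ sitting compatibly over $C_{0,1}(G/\Gamma,\rho)$), hence injective. Then Lemma~\ref{inj:lemma} immediately gives that $\pi$ is an isomorphism.

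The step I expect to be the main obstacle is making the identification $C^*_{\red}(\hat{G}/\Gamma,\rho)^K \cong C_0(G/K\Gamma)$ fully rigorous, i.e.\ checking that the faithful representation $V$ on $L^2(G)$ restricts on $B_0$ to something whose closure is exactly $C_0(G/K\Gamma)$ with its own $C^*$-norm, and that the $K$-action $\beta$ passes to the reduced algebra strongly continuously with the expected spectral decomposition. This requires unwinding the definition of $V$ in \eqref{mu:eq} on $f\in B_0$ (where $f=f_0$, so $f$ is right-$K$-invariant and descends to $G/K\Gamma$) and verifying $\pa V(f)\pa$ equals the supremum norm; the argument is parallel to the one already carried out for the universal algebra in Lemma~\ref{cont:lemma}, using $f^* * f \in B_0$ and the multiplicativity of the supremum norm on the commutative algebra $B_0$. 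Once the fixed-point subalgebras on both sides are pinned down as the \emph{same} $C^*$-completion of the pre-$C^*$-algebra $(B_0, \pa\cdot\pa_\infty)$ and $\pi$ is seen to restrict to the identity there, the rest is a direct appeal to Lemma~\ref{inj:lemma}.
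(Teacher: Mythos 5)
Your proposal is correct and follows essentially the same route as the paper: apply Lemma~\ref{inj:lemma} to the $K$-actions, identify the fixed-point subalgebra of the universal algebra as $B_0$ via Lemma~\ref{cont:lemma}, and deduce injectivity there from the faithfulness of $C_{0,1}(G/\Gamma,\rho)\rightarrow C^*_{\red}(\hat{G}/\Gamma,\rho)$. The step you flag as the main obstacle --- pinning down $C^*_{\red}(\hat{G}/\Gamma,\rho)^K$ as $C_0(G/K\Gamma)$ --- is not actually needed, since Lemma~\ref{inj:lemma} only requires injectivity on the fixed-point subalgebra of the \emph{domain}; all the paper checks on the reduced side is that $\beta$ descends to a strongly continuous action there, which follows from the surjectivity and contractivity of the canonical map.
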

\begin{proof}
We shall apply Lemma~\ref{inj:lemma} to show that the canonical $*$-homomorphism $\varphi: C^*(\hat{G}/\Gamma, \rho)\rightarrow
C^*_{\red}(\hat{G}/\Gamma, \rho)$ is
an isomorphism.
By \cite[Lemma 4.4]{LR97}
the action $\beta$ on $C_{0, 1}(G/\Gamma, \rho)$ extends
to an action of $K$ on $C^*_{\red}(\hat{G}/\Gamma, \rho)$, which we denote by $\beta'$.
Clearly
$\varphi$ is $K$-equivariant.
By Lemma~\ref{cont:lemma} $\beta$ is strongly continuous
on $C^*(\hat{G}/\Gamma, \rho)$. Since $\varphi$ is contractive, it follows that  $\beta'$ is strongly continuous
on $C^*_{\red}(\hat{G}/\Gamma, \rho)$. By Lemma~\ref{cont:lemma} the fixed-point
subalgebra $(C^*(\hat{G}/\Gamma, \rho))^K$
is $B_0$.
Since
the homomorphism $C_{0, 1}(G/\Gamma, \rho)\rightarrow
C^*_{\red}(\hat{G}/\Gamma, \rho)$ is injective, we see that
the restriction of $\varphi$ on $(C^*(\hat{G}/\Gamma, \rho))^K$  is injective. Therefore
the conditions of Lemma~\ref{inj:lemma} are satisfied and we conclude that $\varphi$ is
an isomorphism.
\end{proof}


We refer the reader to \cite{DF} for a comprehensive treatment
of {\it $C^*$-algebraic bundles}, which are usually called {\it Fell bundles} now.
Notice that for $f_s\in B_s$ and $g_t\in B_t$ the
product $f_s*g_t$ is in $B_{s+t}$ and $f^*_s$ is in
$B_{-s}$. Also $||f^*_s*f_s||=||f_s||^2$. Therefore
we have a Fell bundle $\mathcal{B}^{\rho}=\{B_s\}_{s\in \hat{K}}$
over $\hat{K}$ with operations given by
(\ref{b, 1:eq}) and (\ref{b, 1*:eq}).
It is easy to see that $C_{0, 1}(G/\Gamma, \rho)$ is
exactly the $L^1$-algebra of $\mathcal{B}^{\rho}$ (cf. the proof
of \cite[Proposition 5.2]{LR95}). Thus the $C^*$-algebra $C^*(\hat{G}/\Gamma, \rho)$
is also the enveloping
$C^*$-algebra $C^*(\mathcal{B}^{\rho})$ of the Fell
bundle $\mathcal{B}^{\rho}$.

Next we discuss what happens if we let $\rho$ vary continuously. We refer the reader to \cite[Chapter 10]{Dixmier} for
the basics of continuous fields of Banach spaces and $C^*$-algebras.
On page 505 of \cite{LR97} Landstad and Raeburn pointed out that it seems reasonable that
we shall
get a continuous field of $C^*$-algebras, but no proof was given there.
This is indeed true, and we give a proof here. To be precise, fix $G$, $\Gamma$ and $K$,
let $W$ be a locally compact Hausdorff space and for each $w\in W$
we assign a $\rho_w$ satisfying (S1) and (S2) such that the map
$w\mapsto \rho_w(s)$ is continuous for each $s\in \hat{K}$.  Notice
that $\mathcal{B}^{\rho}$ as a Banach space bundle over $\hat{K}$ do not
depend on $\rho$. For clarity we denote the product
and $*$-operation in (\ref{b, 1:eq}) and (\ref{b, 1*:eq})
by $f_s*_wg_t$ and $f^{*_w}_s$. For any $f_s\in B_s$ and $g_t\in B_t$, clearly
the maps $w\mapsto f_s*_wg_t$ and $w\mapsto f^{*_w}_s$ are both continuous.
This leads to the next lemma, which is a slight generalization
of \cite[Proposition 3.3, Theorem 3.5]{AE01}.
The proof of \cite[Proposition 3.3, Theorem 3.5]{AE01}, which in turn follows the lines of
\cite{Rieffel93},
 is easily
seen to hold also in our case.

\begin{lemma} \label{cont of Fell:lemma}
Let $H$ be a discrete group and $A_h$ be a vector space
for each $h\in H$.
Let $W$ be a locally compact Hausdorff space
and for each $w\in W$ assign norms and algebra operations making $\mathcal{A}^w=\{A_h\}_{h\in H}$
into
a Fell bundle in such a way that
for any $f_s\in A_s$ and $g_t\in A_t$ the map $w\mapsto ||f_s||_w\in \Rb$ is continuous
(then we have a continuous field of Banach spaces $(A_s, ||\cdot ||_w)_{w\in W}$
over $W$ for each $s\in H$)
and the sections
$w\mapsto f_s*_wg_t\in B_{st}$ and
$w\mapsto f^{*_w}_s\in B_{s^{-1}}$ are continuous in the above continuous fields
of Banach spaces $(B_{st}, ||\cdot ||_w)_{w\in W}$ and $(B_{s^{-1}}, ||\cdot ||_w)_{w\in W}$ respectively .
Then the map $w\mapsto ||f||_w$ is upper semi-continuous for each $f\in \oplus_{s\in H}A_s$,
where $||\cdot ||_w$ is the norm on the enveloping $C^*$-algebra $C^*(\mathcal{A}^w)$ and
extends the norm of $A_s$ as part of $\mathcal{A}^w$ for each $s\in H$.
Moreover, if $H$ is amenable, then $\{C^*(\mathcal{A}^w)\}_{w\in W}$
is a continuous field of $C^*$-algebras with the field structure determined
by the continuous sections $w\mapsto f$ for all $f\in \oplus_{s\in H}A_s$.
\end{lemma}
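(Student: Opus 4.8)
The plan is to follow \cite[Proposition 3.3, Theorem 3.5]{AE01} (which in turn follow \cite{Rieffel93}); those arguments carry over with only notational changes, so I describe the structure and isolate the one delicate point. Two assertions must be proved: (i) $w\mapsto\|f\|_w$ is upper semi-continuous for every $f\in\oplus_{s\in H}A_s$, with no restriction on $H$; and (ii) when $H$ is amenable this map is also lower semi-continuous, hence continuous. Granting (i) and (ii): since $\oplus_sA_s$ is dense in each fibre $C^*(\mathcal{A}^w)$ and the constant sections $w\mapsto f$ ($f\in\oplus_sA_s$) form a linear space on which the norm varies continuously, the standard criterion for generating a continuous field of Banach spaces (see \cite[Chapter 10]{Dixmier}) applies; and the product and involution sections $w\mapsto f*_wg$, $w\mapsto f^{*_w}$ are continuous in this field because they take values in $\oplus_sA_s$, depend continuously on $w$ fibrewise by hypothesis, and $\|\cdot\|_w$ is dominated by the $L^1$-norm $\|g\|_{1,w}=\sum_s\|g_s\|_w$, whose $w$-dependence is continuous. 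Hence $\{C^*(\mathcal{A}^w)\}_w$ is a continuous field of $C^*$-algebras.

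For (i) I would show that $\{w:\|f\|_w\ge c\}$ is closed for each $c>0$. Using the standard description of the enveloping $C^*$-norm of a positive element as a supremum over positive linear functionals of norm $\le 1$ on the $L^1$-algebra, we have $\|f\|_w^2=\|a_w\|_w=\sup\{\omega(a_w)\}$ with $a_w:=f^{*_w}*_wf$ and $\omega$ ranging over positive functionals on $(\oplus_sA_s,*_w)$ with $\|\omega\|\le1$. Let $w_i\to w_0$ with $\|f\|_{w_i}\ge c$, fix $\varepsilon>0$, and choose such $\omega^i$ with $\omega^i(a_{w_i})>c^2-\varepsilon$. Only the finitely many fibres $A_s$ with $s\in(\supp f)^{-1}\supp f$ are relevant; on each of these the family $(\omega^i)$ lies in the dual unit ball, so after passing to a subnet $\omega^i\to\omega^0$ pointwise on the relevant fibres. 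Continuity of the product and involution sections gives $g^{*_{w_i}}*_{w_i}h\to g^{*_{w_0}}*_{w_0}h$ in fibre norm for finitely supported $g,h$, so every positivity matrix $\big(\omega^i(g_j^{*_{w_i}}*_{w_i}g_k)\big)_{j,k}\ge0$ passes in the limit to $\big(\omega^0(g_j^{*_{w_0}}*_{w_0}g_k)\big)_{j,k}\ge0$, and likewise $\|\omega^0\|\le1$. Thus $\omega^0$ is a positive functional of norm $\le1$ for the $w_0$-structure, so $\omega^0(a_{w_0})\le\|a_{w_0}\|_{w_0}$; and $\omega^0(a_{w_0})=\lim_i\omega^i(a_{w_i})\ge c^2-\varepsilon$. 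Hence $\|f\|_{w_0}^2\ge c^2-\varepsilon$, and letting $\varepsilon\to0$ gives $\|f\|_{w_0}\ge c$.

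For (ii) I would use the left regular representation $\lambda_w$ of $\mathcal{A}^w$ on the right Hilbert $A_e$-module completion of $\oplus_sA_s$ for the $w$-structure; since $H$ is amenable, $\|f\|_w=\|\lambda_w(f)\|$. Fix $w_0$ and $\varepsilon>0$ and pick finitely supported $\xi,\eta\in\oplus_sA_s$ with $\|\xi\|_{w_0}=\|\eta\|_{w_0}=1$ and $\|\langle\lambda_{w_0}(f)\xi,\eta\rangle_{w_0}\|>\|f\|_{w_0}-\varepsilon$. The $A_e$-valued inner product $\langle\lambda_w(f)\xi,\eta\rangle_w$ and the norms $\|\xi\|_w,\|\eta\|_w$ are assembled from finitely many of the continuous product and involution sections, so for $w$ near $w_0$ we get $\|f\|_w\ge\|\langle\lambda_w(f)\xi,\eta\rangle_w\|/(\|\xi\|_w\|\eta\|_w)>\|f\|_{w_0}-2\varepsilon$. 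Hence $\liminf_{w\to w_0}\|f\|_w\ge\|f\|_{w_0}$, which together with (i) yields continuity.

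The step I expect to be the main obstacle is (i): it has to hold with $H$ arbitrary, so one cannot argue by transporting representations forward from nearby fibres, and the subtle point is that positivity of the limiting functional $\omega^0$ must be verified while the multiplication $*_w$ is itself moving --- which is precisely where the hypothesis that the product and involution sections vary continuously is used, ensuring that the (closed) positivity conditions survive the weak-$*$ limit. The remaining ingredients --- weak-$*$ compactness, the reduction to finitely many fibres, the regular-representation estimate, and the assembly of the field via \cite[Chapter 10]{Dixmier} --- are routine.
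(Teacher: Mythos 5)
Your proposal takes essentially the same route as the paper, which likewise just invokes \cite[Proposition 3.3, Theorem 3.5]{AE01} (following \cite{Rieffel93}) and notes that those arguments carry over; the details you supply --- upper semicontinuity of the universal norm via weak-$*$ limits of positive functionals, whose positivity survives the limit precisely because the products $g^{*_{w_i}}*_{w_i}h$ converge in fibre norm, and lower semicontinuity of the reduced ($=$ universal, by amenability) norm via matrix coefficients of the regular representation on the Hilbert $A_e$-module --- are exactly those arguments. The one imprecision is that the limiting functional $\omega^0$ should be extracted (by a Tychonoff argument) as a pointwise limit on all of $\oplus_{s}A_s$, not merely on the fibres indexed by $(\supp f)^{-1}\supp f$, since positivity and the bound $\omega^0(a_{w_0})\le\|a_{w_0}\|_{w_0}$ require $\omega^0$ to be a norm-$\le 1$ positive functional on the whole $L^1$-algebra; this is a one-line fix.
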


Since every discrete abelian group is amenable \cite[page 14]{Paterson}, from Proposition~\ref{Fell:prop}
we get

\begin{proposition} \label{cont field:prop}
Fix $G, \Gamma$ and $K$. Let $W$ by a locally compact Hausdorff space and for each $w\in W$
let $\rho_w$ satisfy (S1) and (S2) such that the map
$w\mapsto \rho_w(s)$ is continuous for each $s\in \hat{K}$.
Then $\{C^*(\hat{G}/\Gamma, \rho_w)\}_{w\in W}$
is a continuous field of $C^*$-algebras with the field structure determined
by the continuous sections $w\mapsto f$ for all $f\in \oplus_{s\in \hat{K}}B_s$.
\end{proposition}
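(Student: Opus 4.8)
The plan is to realise each $C^*(\hat{G}/\Gamma, \rho_w)$ as the enveloping $C^*$-algebra of a Fell bundle over one and the same discrete group, and then to quote Lemma~\ref{cont of Fell:lemma}. Recall from the discussion following Proposition~\ref{Fell:prop} that $C^*(\hat{G}/\Gamma, \rho_w)$ equals the enveloping $C^*$-algebra $C^*(\mathcal{B}^{\rho_w})$ of the Fell bundle $\mathcal{B}^{\rho_w}=\{B_s\}_{s\in\hat{K}}$, whose fibres $B_s$ (see (\ref{Bs:eqn})) are vector spaces independent of $w$ and whose bundle operations are those of (\ref{b, 1:eq}) and (\ref{b, 1*:eq}) with $\rho$ replaced by $\rho_w$. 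Accordingly I would apply Lemma~\ref{cont of Fell:lemma} with $H=\hat{K}$, $A_s=B_s$, and $\mathcal{A}^w=\mathcal{B}^{\rho_w}$.

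It then remains to check the hypotheses of that lemma. First, $\hat{K}$ is discrete, because $K$ is compact, and abelian, because $K$ is; hence $\hat{K}$ is amenable by \cite[page 14]{Paterson}. Second, by the proof of Lemma~\ref{cont:lemma} the norm of $B_s$ inside $C^*(\mathcal{B}^{\rho_w})=C^*(\hat{G}/\Gamma,\rho_w)$ is the supremum norm of $B_s\subseteq C_0(G/\Gamma)$, which plainly does not involve $\rho_w$; thus $w\mapsto\|f_s\|_w$ is constant, in particular continuous, and the resulting field of Banach spaces $(B_s,\|\cdot\|_w)_{w\in W}$ is (constant, a fortiori) continuous. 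Third, the involution $f_s^{*_w}(x)=\overline{f_s(x)}$ is independent of $w$, so $w\mapsto f_s^{*_w}$ is continuous; and the multiplication section $w\mapsto f_s*_wg_t$ is continuous, as was already noted just before Lemma~\ref{cont of Fell:lemma}: since $w\mapsto\rho_w(t)$ and $w\mapsto\rho_w(-s)$ are continuous and functions in $C_0(G)$ are uniformly continuous under right translation, one has $f_s(\,\cdot\,\rho_w(t))\,g_t(\,\cdot\,\rho_w(-s))\to f_s(\,\cdot\,\rho_{w_0}(t))\,g_t(\,\cdot\,\rho_{w_0}(-s))$ in the supremum norm of $B_{s+t}$ as $w\to w_0$.

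With the hypotheses in place, the amenable part of Lemma~\ref{cont of Fell:lemma} gives that $\{C^*(\mathcal{B}^{\rho_w})\}_{w\in W}$ is a continuous field of $C^*$-algebras whose field structure is determined by the continuous sections $w\mapsto f$ for $f\in\oplus_{s\in\hat{K}}B_s$. Since $C^*(\mathcal{B}^{\rho_w})=C^*(\hat{G}/\Gamma,\rho_w)$ (equivalently, by Proposition~\ref{Fell:prop}, $\cong C^*_{\red}(\hat{G}/\Gamma,\rho_w)$), this is precisely the statement of the proposition.

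Essentially all the analytic substance is absorbed into Lemma~\ref{cont of Fell:lemma} (whose proof is imported from \cite{AE01} and \cite{Rieffel93}), so within the proof of the proposition proper there is little to obstruct us. If forced to point to one step, it is the norm-continuity of the multiplication sections $w\mapsto f_s*_wg_t$: this is the only place where the hypothesis that $w\mapsto\rho_w(s)$ be continuous actually enters, and it is the step whose verification hinges on the uniform continuity, under right translation, of functions on $G$ vanishing at infinity. Everything else — amenability of $\hat{K}$, the $w$-independence of the fibre norms and of the involution — is immediate from the definitions and from Lemma~\ref{cont:lemma}.
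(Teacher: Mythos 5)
Your proof is correct and follows essentially the same route as the paper: realize each $C^*(\hat{G}/\Gamma,\rho_w)$ as the enveloping $C^*$-algebra of the Fell bundle $\mathcal{B}^{\rho_w}$ over the discrete amenable group $\hat{K}$, note that the fibres and their norms do not depend on $w$ while the product and involution sections vary continuously, and invoke Lemma~\ref{cont of Fell:lemma}. The only cosmetic slip is the appeal to uniform continuity of functions in $C_0(G)$ where you mean functions in $C_0(G/\Gamma)$ viewed on $G$ (for which right uniform continuity still holds); this does not affect the argument.
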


\section{Derivations}  \label{der:sec}

In this section we prove Proposition~\ref{der:prop}, to establish the relation between derivations
coming from $\alpha$ and $\beta$.

Throughout the rest of this paper, we assume:
\begin{enumerate}
\item[(S3)] $G/\Gamma$ is compact.
\item[(S4)] $G$ is a Lie group.
\end{enumerate}
The examples in Section~\ref{nh:sec} all satisfy these conditions.

We refer the reader to \cite[Section 1.3]{Hamilton} for the discussion about differentiable maps into
Fr\'echet spaces. We just recall that a continuous map $\psi$ from a smooth manifold $M$ into a Fr\'echet space
$A$  is {\it continuously differentiable}
if for any chart $(U, \phi)$ of $G$, where $U$ is an open subset of some Euclidean space $\Rb^n$
and
$\phi$ is a diffeomorphism from $U$ onto an open set of $M$, the derivative
$$ D(\psi\circ \phi)(x, h)=\lim_{\Rb \ni\nu\to 0}\frac{\psi\circ \phi(x+\nu h)-\psi\circ \phi(x)}{\nu}$$
exists for all $(x, h)\in (U, \Rb^n)$ and is a jointly continuous map from $(U, \Rb^n)$ into $A$.
In such case, $D(\psi\circ \phi)(x, h)$ is linear on $h$, and depends only on $\psi$ and the tangent vector $u:=\phi_*(v_{x, h})$
of $M$ at $\phi(x)$, where $v_{x, h}$ denotes the tangent vector $h$ at $x$. Thus we may denote $D(\psi\circ \phi)(x, h)$ by
$\partial_u\psi$.  Then $\partial_u\psi$ is linear on $u$.

Denote by $\fg$ and $\fk$ the Lie algebras
of $G$ and $K$ respectively.
For a strongly continuous action $\sigma$ of $G$ on a Banach space $A$ as isometric automorphisms,
we say that an element $a\in A$ is {\it once differentiable} with respect to $\sigma$ if
the orbit map $\psi_a$ from $G$ into $A$ sending $x$ to $\sigma_x(a)$ is continuously differentiable.
Then the set $A_1$ of once differentiable elements is a linear subspace of $A$.
For any $a\in A$ and any compactly supported smooth $\Cb$-valued function $\varphi$ on $G$, it is easily checked
that $\int_G\varphi(x)\sigma_x(a)\, dx$ is in $A_1$. As $a$ can be approximated by such elements, we see
that $A_1$ is dense in $A$.
Thinking of $\fg$ as the tangent space of $G$ at the identity element, for each $X\in \fg$ we have the linear map
$\sigma_X: A_1\rightarrow A$ sending $a$ to $\partial_X\psi_a$.  Fix a norm on $\fg$. We define a seminorm
$L$ on $A_1$ by setting $L(a)$ to be the norm of the linear map $\fg\rightarrow A$ sending $X$ to $\sigma_Xa$.

\begin{lemma} \label{derivation via length:lemma}
Let $\sigma$ be a strongly continuous action of $G$  on a Banach space $A$ as isometric automorphisms.
For any $a\in A_1$, one has
$$ L(a)=\sup_{0\neq X\in \fg} \frac{\pa \sigma_{e^{X}}(a)-a\pa}{\pa X\pa}.$$
\end{lemma}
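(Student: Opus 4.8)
The plan is to establish the two inequalities separately, using only that $\nu\mapsto e^{\nu X}$ is a one-parameter subgroup of $G$ and that every $\sigma_x$ is an isometry. First I would record that, working in the chart at the identity furnished by the exponential map (whose differential at $0\in\fg$ is the identity), the derivative $\sigma_Xa=\partial_X\psi_a$ is exactly $\lim_{\nu\to 0}\nu^{-1}(\sigma_{e^{\nu X}}(a)-a)$. For the inequality ``$\le$'', fix $0\neq X\in\fg$; for small $\nu>0$, since $\pa\nu X\pa=\nu\pa X\pa$ one has $\nu^{-1}\pa\sigma_{e^{\nu X}}(a)-a\pa=\pa X\pa\cdot\pa\nu X\pa^{-1}\pa\sigma_{e^{\nu X}}(a)-a\pa\le \pa X\pa\cdot\sup_{0\neq Y}\pa Y\pa^{-1}\pa\sigma_{e^Y}(a)-a\pa$, and letting $\nu\to 0$ gives $\pa\sigma_Xa\pa\le\pa X\pa\cdot\sup_{0\neq Y}\pa Y\pa^{-1}\pa\sigma_{e^Y}(a)-a\pa$; taking the supremum over unit vectors $X$ yields $L(a)\le\sup_{0\neq Y}\pa Y\pa^{-1}\pa\sigma_{e^Y}(a)-a\pa$.

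For the reverse inequality, fix $0\neq X\in\fg$ and put $g(t):=\sigma_{e^{tX}}(a)$ for $t\in\Rb$. From $e^{(t+h)X}=e^{tX}e^{hX}$ and boundedness of $\sigma_{e^{tX}}$ one gets $h^{-1}(g(t+h)-g(t))=\sigma_{e^{tX}}\bigl(h^{-1}(\sigma_{e^{hX}}(a)-a)\bigr)\to\sigma_{e^{tX}}(\sigma_Xa)$ as $h\to 0$; thus $g$ is differentiable with $g'(t)=\sigma_{e^{tX}}(\sigma_Xa)$, which is continuous in $t$ because $\sigma$ is strongly continuous. Hence $g$ is continuously differentiable and the fundamental theorem of calculus for Banach-space-valued functions gives $\sigma_{e^X}(a)-a=g(1)-g(0)=\int_0^1\sigma_{e^{tX}}(\sigma_Xa)\,dt$. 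Since each $\sigma_{e^{tX}}$ is isometric, $\pa\sigma_{e^X}(a)-a\pa\le\int_0^1\pa\sigma_Xa\pa\,dt=\pa\sigma_Xa\pa\le L(a)\pa X\pa$; dividing by $\pa X\pa$ and taking the supremum over $0\neq X$ finishes the proof.

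I do not anticipate a real obstacle: the lemma is essentially the standard ``Lipschitz constant equals supremum of derivative norms'' statement specialized to one-parameter subgroups. The only point needing a little care is the identification of $\partial_X\psi_a$ with the one-parameter-group derivative, and the upgrade of differentiability-at-$0$ to the $C^1$-property of $t\mapsto\sigma_{e^{tX}}(a)$ that the fundamental theorem of calculus requires; both are immediate from $a\in A_1$, the group law $e^{(t+h)X}=e^{tX}e^{hX}$, and boundedness of the operators $\sigma_x$.
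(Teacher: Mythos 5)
Your proposal is correct and follows essentially the same route as the paper: the lower bound comes from recognizing $\sigma_X(a)$ as the limit of the difference quotients $\nu^{-1}(\sigma_{e^{\nu X}}(a)-a)$, and the upper bound from the integral representation $\sigma_{e^{X}}(a)-a=\int_0^1\sigma_{e^{tX}}(\sigma_X(a))\,dt$ together with the isometry of each $\sigma_{e^{tX}}$. The only difference is cosmetic (you argue via two inequalities where the paper computes $\sup_{\nu>0}\nu^{-1}\pa\sigma_{e^{\nu X}}(a)-a\pa=\pa\sigma_X(a)\pa$ for unit vectors and then rewrites the double supremum), and you are somewhat more explicit about justifying the $C^1$-property of $t\mapsto\sigma_{e^{tX}}(a)$ needed for the fundamental theorem of calculus.
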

\begin{proof}The proof is similar to that of \cite[Proposition 8.6]{Rieffel00}.
Let $X\in \fg$ with $\pa X\pa =1$. One has
\begin{eqnarray*}
\sup_{\nu >0} \frac{\pa \sigma_{e^{\nu X}}(a)-a\pa}{\nu }\ge \lim_{\nu\to 0^+} \frac{\pa \sigma_{e^{\nu X}}(a)-a\pa}{\nu }=\pa \sigma_X(a)\pa.
\end{eqnarray*}
For any $\nu>0$, one also has
\begin{eqnarray*}
\pa \sigma_{e^{\nu X}}(a)-a\pa
&=&\pa \int^{\nu}_0\sigma_{e^{z X}}(\sigma_X(a))\, dz\pa
\le  \int^{\nu}_0\pa \sigma_{e^{z X}}(\sigma_X(a))\pa \, dz \\
&=& \int^{\nu}_0\pa \sigma_X(a)\pa \, dz
=\nu \pa \sigma_X(a)\pa.
\end{eqnarray*}
Therefore
$$ \sup_{\nu >0} \frac{\pa \sigma_{e^{\nu X}}(a)-a\pa}{\nu }=\pa \sigma_X(a)\pa.$$
Thus
\begin{eqnarray*}
\sup_{0\neq X\in \fg} \frac{\pa \sigma_{e^{X}}(a)-a\pa}{\pa X\pa}
&=&\sup_{X\in \fg, \pa X\pa=1} \sup_{\nu >0} \frac{\pa \sigma_{e^{\nu X}}(a)-a\pa}{\nu } \\
&=&\sup_{X\in \fg, \pa X\pa=1} \pa \sigma_X(a)\pa
=L(a).
\end{eqnarray*}
\end{proof}

\begin{lemma} \label{C1:lemma}
Let $\sigma$ be a strongly continuous action of $G$  on a Banach space $A$ as isometric automorphisms.
Then $A_1$ is a Banach space with the norm $\fp(a):=L(a)+\pa a\pa$. Suppose that $\sigma'$ is
a strongly continuous  isometric action of a topological group
$H$ on $A$, commuting with $\sigma$. Then $H$ preserves $A_1$, and the restriction
of $\sigma'$ on $A_1$ preserves the norm $\fp$ and is strongly continuous with respect to $\fp$.
\end{lemma}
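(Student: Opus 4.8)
The plan is to handle the two assertions separately: first that $(A_1,\fp)$ is a Banach space, then that a commuting isometric action $\sigma'$ of $H$ restricts well to $A_1$. For the first part I would take a $\fp$-Cauchy sequence $(a_n)$ in $A_1$. Since $\pa\cdot\pa\le\fp$, the sequence converges in $A$ to some $a$, and for each $X\in\fg$ the sequence $(\sigma_X a_n)$ converges in $A$ to some element $b_X$ (the map $X\mapsto\sigma_X a_n$ is Cauchy in $\Hom(\fg,A)$ in operator norm, hence converges uniformly on the unit sphere of $\fg$). The task is to show $a\in A_1$ with $\sigma_X a=b_X$. I would use the integral identity already exploited in Lemma~\ref{derivation via length:lemma}: for $a_n$ one has $\sigma_{e^{\nu X}}(a_n)-a_n=\int_0^\nu \sigma_{e^{zX}}(\sigma_X a_n)\,dz$; passing to the limit in $n$ (all convergences are in $\pa\cdot\pa$ and uniform in $z\in[0,\nu]$) gives $\sigma_{e^{\nu X}}(a)-a=\int_0^\nu\sigma_{e^{zX}}(b_X)\,dz$. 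Since $z\mapsto\sigma_{e^{zX}}(b_X)$ is continuous (strong continuity of $\sigma$), differentiating at $\nu=0$ shows the directional derivative of the orbit map $\psi_a$ along $X$ exists and equals $b_X$; joint continuity of $(x,h)\mapsto$ the derivative on charts follows because $\psi_a$ is, on any chart, a uniform limit of the $C^1$ maps $\psi_{a_n}$ together with their first derivatives. Hence $a\in A_1$, $\sigma_X a=b_X$, $L(a)=\lim L(a_n)$, and $\fp(a_n)\to\fp(a)$, so $(A_1,\fp)$ is complete.

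For the second part, let $h\in H$ and $a\in A_1$. Because $\sigma'_h$ commutes with every $\sigma_x$, the orbit map of $\sigma'_h(a)$ is $x\mapsto\sigma_x(\sigma'_h a)=\sigma'_h(\sigma_x a)=\sigma'_h\circ\psi_a(x)$; since $\sigma'_h$ is a bounded (indeed isometric) linear operator on $A$, composition with it preserves continuous differentiability, so $\sigma'_h(a)\in A_1$ and $\sigma_X(\sigma'_h a)=\sigma'_h(\sigma_X a)$ for all $X\in\fg$. Taking norms in $\Hom(\fg,A)$ and using that $\sigma'_h$ is isometric gives $L(\sigma'_h a)=L(a)$, and combined with $\pa\sigma'_h a\pa=\pa a\pa$ we get $\fp(\sigma'_h a)=\fp(a)$, i.e.\ $\sigma'$ restricts to an isometric action on $(A_1,\fp)$. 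Finally, for strong continuity with respect to $\fp$: fix $a\in A_1$; I would use Lemma~\ref{derivation via length:lemma} to write $L(\sigma'_h a-a)=\sup_{0\ne X}\pa\sigma_{e^X}(\sigma'_h a-a)-(\sigma'_h a-a)\pa/\pa X\pa$, and then rewrite $\sigma_{e^X}(\sigma'_h a)-\sigma'_h a=\sigma'_h(\sigma_{e^X}a-a)$ via commutativity and isometry of $\sigma'_h$, obtaining $L(\sigma'_h a-a)=\sup_{0\ne X}\pa\sigma'_h(\sigma_{e^X}a-a)-(\sigma_{e^X}a-a)\pa/\pa X\pa$. This does not immediately go to $0$ as $h\to e$ because of the supremum over $X$; the right approach is to first reduce to elements of the form $a=\int_G\varphi(x)\sigma_x(c)\,dx$ with $\varphi$ smooth of compact support, which (as noted in the text) are $\fp$-dense in $A_1$ — indeed for such $a$, $\sigma_X a=\int_G\varphi(x)\partial_X(\sigma_x c)\,dx$ can be rewritten by integration by parts as $\int_G(\widetilde{X}\varphi)(x)\sigma_x(c)\,dx$ for a suitable left-translate derivative $\widetilde X\varphi$, so $L(\sigma'_h a - a)\le C\pa\sigma'_h c - c\pa$ with $C$ depending only on $\varphi$, and strong continuity of $\sigma'$ on $A$ finishes it; then an $\varepsilon/3$-argument using the already-established isometry of $\sigma'$ on $(A_1,\fp)$ passes strong continuity to all of $A_1$.

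The main obstacle is the last point: proving $\fp$-strong continuity of $\sigma'$, since naive estimates leave a supremum over $\fg$ that one cannot control uniformly. The resolution is the density of the smoothed elements $\int_G\varphi(x)\sigma_x(c)\,dx$ in $(A_1,\fp)$ together with the integration-by-parts trick that converts the derivation $\sigma_X$ applied to such an element back into an honest element of $A$ (no derivative), reducing everything to strong continuity of $\sigma'$ on $A$; the $\fp$-isometry of $\sigma'$ established earlier is exactly what lets one promote continuity from a dense set to all of $A_1$.
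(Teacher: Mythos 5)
Your argument for completeness of $(A_1,\fp)$, and for $\sigma'$ preserving $A_1$ and $\fp$, matches the paper's proof in all essentials: the paper integrates $\sigma_{\varrho_z}(\sigma_{\varrho'_z}(a_n))$ along arbitrary $C^1$ curves rather than along one-parameter subgroups, but the limiting integral identity and the passage from it to continuous differentiability of $\psi_a$ are the same. Where you genuinely diverge is the last step, strong continuity of $\sigma'$ with respect to $\fp$. The paper's argument is a short direct one: since $\fg$ is finite dimensional, $\{\sigma_X(a)\mid \pa X\pa\le 1\}$ is a compact subset of $A$ (the image of a compact set under the bounded linear map $X\mapsto\sigma_X(a)$), and a strongly continuous family of isometries converges uniformly on compact sets; hence $\sup_{\pa X\pa\le 1}\pa\sigma'_h(\sigma_X(a))-\sigma'_{h'}(\sigma_X(a))\pa<\varepsilon$ for $h'$ near $h$, which controls $L(\sigma'_h(a)-\sigma'_{h'}(a))$ and kills the troublesome supremum over $X$ in one stroke. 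Your route --- reduce to smoothed elements $\int_G\varphi(x)\sigma_x(c)\,dx$, integrate by parts to turn $\sigma_X$ into a convolution kernel, then promote by an $\varepsilon/3$ argument --- also works, but it rests on the claim that such elements are $\fp$-dense in $A_1$, which is \emph{not} ``noted in the text'': the paper only observes that they are $\pa\cdot\pa$-dense in $A$. The $\fp$-density is true but needs its own proof, e.g.\ writing $\sigma_X\big(\int_G\varphi(x)\sigma_x(a)\,dx\big)=\int_G\varphi(x)\sigma_x(\sigma_{\Ad_{x^{-1}}(X)}(a))\,dx$ and estimating the difference from $\sigma_X(a)$ by $L(a)\sup_{x\in\supp\varphi}\pa\Ad_{x^{-1}}(X)-X\pa$ plus $\sup_{x\in\supp\varphi}\max_j\pa\sigma_x(\sigma_{X_j}(a))-\sigma_{X_j}(a)\pa$ for a basis $X_1,\dots,X_n$ of $\fg$; at that point you have re-derived exactly the uniformity over the unit ball of $\fg$ that the paper's compactness observation gives for free, so the detour buys nothing here. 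Either supply that density lemma explicitly or replace the whole reduction by the paper's direct compactness argument.
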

\begin{proof} Let $\{a_n\}_{n\in \Nb}$ be a Cauchy sequence in $A_1$ under the norm $\fp$.
Then as $n$ goes to infinity, $a_n$ converges to some $a\in A$, and $\sigma_X(a_n)$ converge to some $b_X$ in $A$ uniformly
on $X$ in bounded subsets of $\fg$.
Let $\varrho: [0, 1]\rightarrow G$ be a continuously differentiable curve in $G$.
Then $\lim_{z\rightarrow 0}\frac{\sigma_{\varrho_{\nu +z}}(a_n)-\sigma_{\varrho_{\nu}}(a_n)}{z}=
\sigma_{\varrho_{\nu}}(\sigma_{\varrho'_{\nu}}(a_n))$ for all $\nu \in [0, 1]$.
Thus
$$\sigma_{\varrho_{\nu}}(a_n)-\sigma_{\varrho_{0}}(a_n)=\int^{\nu}_0\sigma_{\varrho_{z}}(\sigma_{\varrho'_{z}}(a_n))\, dz.$$
Letting $n\to \infty$ we get
$$ \sigma_{\varrho_{\nu}}(a)-\sigma_{\varrho_{0}}(a)=\int^{\nu}_0\sigma_{\varrho_{z}}(b_{\varrho'_{z}})\, dz.$$
Therefore
$\lim_{z\rightarrow 0}\frac{\sigma_{\varrho_{ z}}(a)-\sigma_{\varrho_{0}}(a)}{z}=\sigma_{\varrho_0}(b_{\varrho'_{0}})$.
It follows easily that $a\in A_1$ and $\sigma_X(a)=b_X$ for all $X\in \fg$. Consequently,
$a_n$ converges to $a$ in $A_1$ under the norm $\fg$, and hence $A_1$ is a Banach space under the norm
$\fg$.

Clearly $\sigma'$ preserves $A_1$ and the norm $\fp$. For any $a\in A_1$, the set of $\sigma_X(a)$ for $X$ in the unit ball
of $\fg$ is compact. Then for any $h\in H$ and $\varepsilon>0$, when $h'\in H$ is close enough to
$h$, one has $\pa \sigma'_h(a)-\sigma'_{h'}(a)\pa <\varepsilon$ and $\pa \sigma_X(\sigma'_h(a))-\sigma_X(\sigma'_{h'}(a))\pa
=\pa \sigma'_h(\sigma_X(a))-\sigma'_{h'}(\sigma_X(a)) \pa<\varepsilon$ for all $X$ in the unit ball of $\fg$.
Consequently,  $\fp(\sigma'_h(a)-\sigma'_{h'}(a))
=L(\sigma'_h(a)-\sigma'_{h'}(a))+\pa \sigma'_h(a)-\sigma'_{h'}(a)\pa< 2\varepsilon$. Therefore the restriction of
$\sigma'$ on $A_1$ is strongly continuous with respect to $\fp$.
\end{proof}

By Lemma~\ref{cont:lemma} the actions $\alpha$ and $\beta$ on $C^*(\hat{G}/\Gamma, \rho)$ commute with each other and
are strongly continuous.
Denote by $C^1(\hat{G}/\Gamma, \rho)$ the space of once differentiable elements
of $C^*(\hat{G}/\Gamma, \rho)$ with respect to the action $\alpha$.
Recall the $B_s$ defined in (\ref{Bs:eqn}).

\begin{proposition} \label{der:prop}
Let $X_1, \cdots{}, X_n$ be a basis of $\fg$. For
$Y\in \fk$ say
\begin{eqnarray*}
\Ad_x(Y)=\sum_jF_{j, Y}(x)X_j,
\end{eqnarray*}
where $\Ad$ denotes the adjoint action of $G$ on $\fg$.
Then $F_{j, Y}\in B_0$.
Any $f\in C^1(\hat{G}/\Gamma, \rho)$ is once differentiable with respect to
the action $\beta$ and
\begin{eqnarray} \label{der:eq}
\beta_Y(f)=-\sum_jF_{j, Y}*\alpha_{X_j}(f).
\end{eqnarray}
\end{proposition}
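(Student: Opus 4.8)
The plan is to establish (\ref{der:eq}) first on the elements that are visibly functions on $G/\Gamma$, namely those in $C^1(\hat{G}/\Gamma,\rho)\cap B_s$, and then transfer it to all of $C^1(\hat{G}/\Gamma,\rho)$ by smoothing in the $K$-direction. I would begin with the coefficients. Since $G$ is a Lie group, $x\mapsto\Ad_x$ is smooth, so each $F_{j,Y}$ is a smooth function on $G$. Conditions (S1)--(S2) say that $\Gamma$, $K$ and $\rho(\hat{K})$ commute with $K$, hence every $\gamma\in\Gamma$, $k\in K$ and $\rho(t)$, $t\in\hat{K}$, commutes with the one-parameter subgroup $\{e^{uY}\}\subseteq K$, so that $\Ad_\gamma(Y)=\Ad_k(Y)=\Ad_{\rho(t)}(Y)=Y$. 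Therefore $F_{j,Y}$ descends to a continuous function on the compact space $G/\Gamma$, is right $K$-invariant — whence $F_{j,Y}=(F_{j,Y})_0\in B_0$ — and is right $\rho(\hat{K})$-invariant. This last property, together with $\sum_tg_t=g$ (the $K$-Fourier series of $g$ converges uniformly to $g$), makes the deformed product (\ref{b, 1:eq}) collapse to the pointwise product $F_{j,Y}*g=F_{j,Y}\,g$ for every $g\in C_{0,1}(G/\Gamma,\rho)$; note also that $Y\mapsto F_{j,Y}$ is linear.

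Next, let $f\in C^1(\hat{G}/\Gamma,\rho)\cap B_s$. As $\beta_k$ acts on $B_s$ by the scalar $\left<k,s\right>$, $f$ is $\beta$-smooth and $\beta_Y(f)=i\ell_s(Y)f$, where $\left<e^{uY},s\right>=e^{iu\ell_s(Y)}$. Since $\alpha$ and $\beta$ commute, $\alpha_{X_j}$ preserves $B_s$, which is closed in $C^*(\hat{G}/\Gamma,\rho)$ by Lemma~\ref{cont:lemma}; hence $\alpha_{X_j}(f)\in B_s\subseteq C_{0,1}(G/\Gamma,\rho)$, so by the previous paragraph both sides of (\ref{der:eq}) lie in $C_0(G/\Gamma)$ and it suffices to compare them pointwise. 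On $B_s$ the $C^*$-norm is the supremum norm, so $\alpha_{X_j}(f)(x)=\frac{d}{du}\big|_{u=0}f(e^{-uX_j}x)$, and by linearity of $Z\mapsto\alpha_Z$ this gives $\sum_jF_{j,Y}(x)\alpha_{X_j}(f)(x)=\alpha_{\Ad_x(Y)}(f)(x)=\frac{d}{du}\big|_{u=0}f(e^{-u\Ad_x(Y)}x)$; using $xe^{uY}=e^{u\Ad_x(Y)}x$ this equals $-\frac{d}{du}\big|_{u=0}f(xe^{uY})=-\beta_Y(f)(x)$, which is (\ref{der:eq}).

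For general $f\in C^1(\hat{G}/\Gamma,\rho)$ I would note that $T_Y:=-\sum_jF_{j,Y}*\alpha_{X_j}(\cdot)$ is bounded from $(C^1(\hat{G}/\Gamma,\rho),\fp)$ to $C^*(\hat{G}/\Gamma,\rho)$, and that by Lemma~\ref{C1:lemma} the action $\beta$ preserves $C^1(\hat{G}/\Gamma,\rho)$ and is strongly $\fp$-continuous there. Hence for an approximate identity $\{\varphi\}$ of smooth functions on $K$ the elements $f_\varphi:=\int_K\varphi(k)\beta_k(f)\,dk$ lie in $C^1(\hat{G}/\Gamma,\rho)$, are $\beta$-smooth, and converge to $f$ in $\fp$ (hence in $C^*$-norm). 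Being $\beta$-smooth, $f_\varphi$ is the absolutely convergent sum $\sum_sP_s(f_\varphi)$ of its spectral components $P_s(f_\varphi)\in C^1(\hat{G}/\Gamma,\rho)\cap B_s$, and $\alpha_{X_j}(f_\varphi)=\int_K\varphi(k)\beta_k(\alpha_{X_j}f)\,dk$ is $\beta$-smooth as well; applying the previous paragraph to each $P_s(f_\varphi)$ and summing over $s$ — the interchanges of the sum with $\alpha_{X_j}$, with $\beta_Y$, and with the $*$-product being valid precisely because of this $\beta$-smoothness — yields $\beta_Y(f_\varphi)=T_Y(f_\varphi)$. Since the generator $\delta_Y$ of the one-parameter group $\{\beta_{e^{uY}}\}$ on $C^*(\hat{G}/\Gamma,\rho)$ is closed and $\delta_Y(f_\varphi)=T_Y(f_\varphi)\to T_Y(f)$ while $f_\varphi\to f$, we get $f\in\Dom(\delta_Y)$ and $\delta_Y(f)=T_Y(f)$ for all $Y\in\fk$; as $K$ is a compact abelian Lie group and $Y\mapsto T_Y(f)$ is linear and norm-continuous, working in exponential coordinates (where $\exp(Y+uZ)=\exp(Y)\exp(uZ)$) shows the orbit map $k\mapsto\beta_k(f)$ is continuously differentiable, so $f$ is once differentiable for $\beta$ and (\ref{der:eq}) holds.

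I expect the crux to be this last step — proving (\ref{der:eq}) for the smoothings $f_\varphi$, which need not be smooth, or even differentiable, as functions on $G/\Gamma$. The resolution is that a $\beta$-smooth element has a $K$-Fourier series converging absolutely in $C^*(\hat{G}/\Gamma,\rho)$, and the same holds for $\alpha_{X_j}(f_\varphi)$; this is exactly what permits pushing the derivations $\alpha_{X_j}$, $\beta_Y$ and the deformed product $F_{j,Y}*(\cdot)$ through the infinite sum and reducing to the pointwise identity already established on the spectral subspaces.
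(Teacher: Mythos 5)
Your argument is correct, and its first half coincides with the paper's: you reduce to $f\in C^1(\hat{G}/\Gamma,\rho)\cap B_s$, where the $C^*$-norm is the supremum norm, and verify (\ref{der:eq}) pointwise via $\sum_jF_{j,Y}(x)X_j=\Ad_x(Y)$ and $e^{-u\Ad_x(Y)}x=xe^{-uY}$ (the paper phrases the same computation as $Y_{\#}=\sum_jF_{j,Y}X^{\#}_j$ for the invariant vector fields), together with the observation that $F_{j,Y}*g=F_{j,Y}\cdot g$ because $F_{j,Y}\in B_0$ is right $\rho(\hat{K})$-invariant. Where you genuinely diverge is the passage to general $f$. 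The paper first integrates the identity along curves $\varrho$ in $K$ to get $\beta_{\varrho_{\nu}}(f)-\beta_{\varrho_0}(f)=\int^{\nu}_0\beta_{\varrho_z}(-\sum_jF_{j,\varrho'_z}*\alpha_{X_j}(f))\,dz$, notes that both sides are continuous in $f$ from $(C^1(\hat{G}/\Gamma,\rho),\fp)$ to $C^*(\hat{G}/\Gamma,\rho)$, extends by density of $\oplus_s(C^1\cap B_s)$ in the $\fp$-norm (Lemma~\ref{C1:lemma}), and only then differentiates. You instead keep the identity in differentiated form, verify it on the $K$-smoothings $f_{\varphi}$ by pushing $\alpha_{X_j}$, $\beta_Y$ and the deformed product through the absolutely convergent Fourier series of a $\beta$-smooth element, and close with the closedness of the generator $\delta_Y$ of $u\mapsto\beta_{e^{uY}}$ plus an exponential-coordinates argument to upgrade membership in $\Dom(\delta_Y)$ for all $Y$ to continuous differentiability of the orbit map. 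Both routes rest on the same smoothing mechanism (the paper's ``standard techniques'' for density are exactly your Fej\'er-type averages), but the paper's integrated identity lets it avoid rapid decay of Fourier coefficients and any appeal to closed unbounded operators, at the cost of carrying the integral form; your version is closer to standard semigroup arguments and makes the interchange-of-limits issues explicit, which is where the real work is. You correctly identified that verifying (\ref{der:eq}) on elements that need not be differentiable functions on $G/\Gamma$ is the crux, and your resolution is sound.
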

\begin{proof}
Clearly $F_{j, Y}$ is a smooth function on $G$.
Since the subgroups $\Gamma$, $K$ and $\rho(\hat{K})$ commute with $K$, if
$y$ is in any of these subgroups, then
$\Ad_y(Y)=Y$, and hence
$$ \sum_jF_{j, Y}(x)X_j=\Ad_x(Y)=\Ad_x(\Ad_y(Y))=\Ad_{xy}(Y)=\sum_jF_{j, Y}(xy)X_j,$$
which means that $F_{j, Y}$ is invariant under the right translation of $y$.
Thus $F_{j, Y}\in C(G/K\Gamma)=C_0(G/K\Gamma)=B_0$. For each $X\in \fg$ denote by
$X^{\#}$ ($X_{\#}$ resp.) the corresponding right (left resp.)
translation invariant vector field on $G$. Then $Y_{\#}=\sum_jF_{j, Y}X^{\#}_j$.

Let $f\in C^1(\hat{G}/\Gamma, \rho)\cap B_s$ for some $s\in \hat{K}$.
By Lemma~\ref{cont:lemma} the norm on $B_s\subseteq C^*(\hat{G}/\Gamma, \rho)$ is exactly the
supremum norm. Thus
$f$ belongs to the space $C^1(G)$ of continuously differentiable functions
on $G$.
For any continuous vector field $Z$ on $G$ denote by $\partial_Z$ the corresponding
derivation map $C^1(G)\rightarrow C(G)$. Then
\begin{eqnarray*}
\partial_{Y_{\#}}(f)=\sum_jF_{j, Y}\partial_{X^{\#}_j}(f)=-\sum_jF_{j, Y}\alpha_{X_j}(f).
\end{eqnarray*}
Since $F_{j, Y}$ is invariant under the right
translation of $\Gamma$ and $\rho(K)$, we have $F_{j, Y}(x)g_t(x)=F_{j, Y}*g_t(x)$ for any $g_t\in B_t$
and $x\in G$. By Lemma~\ref{cont:lemma} the actions $\alpha$ and $\beta$ on $C^*(\hat{G}/\Gamma, \rho)$
commute with each other. Thus $\alpha$ preserves $B_s$, and hence $\alpha_X(f)\in B_s$ for every $X\in \fg$.
Therefore $\partial_{Y_{\#}}(f)=-\sum_jF_{j, Y}*\alpha_{X_j}(f)$.

Let $\varrho: [0, 1]\rightarrow K$ be a continuously differentiable curve in $K$.
Then
$$ \lim_{z\to 0}\frac{f(x\varrho_{\nu+z})-f(x\varrho_{\nu})}{z}=(\partial_{(\varrho'_{\nu})_{\#}}(f))(x\varrho_{\nu})=(-\sum_jF_{j, \varrho'_{\nu}}*\alpha_{X_j}(f))(x\varrho_{\nu})$$
for all $\nu \in [0, 1]$ and $x\in G$, and hence we have the
integral form
\begin{eqnarray} \label{der2:eq}
f(x\varrho_{\nu})-f(x\varrho_0)=\int^{\nu}_0(-\sum_jF_{j, \varrho'_{z}}*\alpha_{X_j}(f))(x\varrho_z)\, dz
\end{eqnarray}
for all $\nu \in [0, 1]$ and $x\in G$.
The left hand side of (\ref{der2:eq}) is the value of
$\beta_{\varrho_{\nu}}(f)-\beta_{\varrho_0}(f)$  at $x$,
while the right hand side of (\ref{der2:eq}) is the value of
$\int^{\nu}_0\beta_{\varrho_z}(-\sum_jF_{j, \varrho'_z}*\alpha_{X_j}(f))\, dz$
at $x$, where the integral is
taken in $B_s\subseteq C^*(\hat{G}/\Gamma, \rho)$. Therefore
\begin{eqnarray} \label{der3:eq}
\beta_{\varrho_{\nu}}(f)-\beta_{\varrho_0}(f)=\int^{\nu}_0\beta_{\varrho_z}(-\sum_{j}F_{j, \varrho'_z}*\alpha_{X_j}(f))\, dz
\end{eqnarray}
for all $\nu \in [0, 1]$.

Clearly (\ref{der3:eq}) also holds for $f\in \oplus_{s\in \hat{K}}(C^1(\hat{G}/\Gamma, \rho)\cap B_s)$.
By Lemma~\ref{C1:lemma} $C^1(\hat{G}/\Gamma, \rho)$ is a Banach space
with norm $\fp(\cdot)=L(\cdot)+\pa \cdot \pa$,  $\beta$ preserves $C^1(\hat{G}/\Gamma, \rho)$ and $\fp$, and the restriction of $\beta$
on $C^1(\hat{G}/\Gamma, \rho)$ is strongly continuous
on $C^1(\hat{G}/\Gamma, \rho)$ with respect to $\fp$.
By Lemma~\ref{cont:lemma} the spectral subspace of $ C^*(\hat{G}/\Gamma, \rho)$ corresponding
to $s\in \hat{K}$ for the action $\beta$ is equal to $B_s$.
It follows  that the spectral subspace of $ C^1(\hat{G}/\Gamma, \rho)$ corresponding
to $s\in \hat{K}$ for the restriction of $\beta$ on $C^1(\hat{G}/\Gamma, \rho)$ is exactly $C^1(\hat{G}/\Gamma, \rho)\cap B_s$.
Then standard techniques tell us that $\oplus_{s\in \hat{K}}(C^1(\hat{G}/\Gamma, \rho)\cap B_s)$
is dense in $C^1(\hat{G}/\Gamma, \rho)$ with respect to $\fp$. Notice that both sides of (\ref{der3:eq})
define continuous maps from $C^1(\hat{G}/\Gamma, \rho)$ to $C^*(\hat{G}/\Gamma, \rho)$.
Therefore (\ref{der3:eq}) holds for all $f\in C^1(\hat{G}/\Gamma, \rho)$.
Consequently,
$$\lim_{z\to 0}\frac{\beta_{\varrho_z}(f)-\beta_{\varrho_0}(f)}{z}=\beta_{\varrho_0}(-\sum_jF_{j, \varrho'_0}*\alpha_{X_j}(f))$$
for all $f\in C^1(\hat{G}/\Gamma, \rho)$. It follows easily that
$f$ is once differentiable with respect to $\beta$ and $\beta_Y(f)=-\sum_jF_{j, Y}*\alpha_{X_j}(f)$ for
all $f\in C^1(\hat{G}/\Gamma, \rho)$ and $Y\in \fk$.
\end{proof}

We shall need the following lemma (compare \cite[Proposition 2.5]{Rieffel07}).

\begin{lemma} \label{approximate:lemma}
Let $\sigma$ be a strongly continuous action of $G$ on a Banach space $A$ as isometric automorphisms.
Let $a\in A$.
Then for any $\varepsilon>0$, there is some $b\in A$ such that $b$ is smooth with respect to $\sigma$,
$\pa b\pa \le \pa a\pa$, $\pa b-a\pa \le \varepsilon$, and
$\sup_{0\neq X\in \fg}\frac{\pa \sigma_X(b)\pa}{\pa X\pa}\le \sup_{0\neq X\in \fg}\frac{\pa \sigma_{e^X}(a)-a\pa}{\pa X\pa}$. If $A$ has an isometric involution being invariant under $\sigma$, then
when $a$ is self-adjoint, we can choose $b$ also to be self-adjoint.
\end{lemma}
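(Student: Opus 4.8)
The standard tool here is mollification by a sequence of smooth compactly supported probability densities on $G$ that concentrate at the identity; this is exactly the device used in \cite{Rieffel07} in the compact group setting, and the point is to check that it still works for a general Lie group $G$. Concretely, I would fix a chart around the identity $e\in G$ and, for small $\delta>0$, choose a smooth function $\varphi_\delta\ge 0$ on $G$, supported in the ball $\{e^X:\pa X\pa\le \delta\}$, with $\int_G\varphi_\delta(x)\,dx=1$ (using the left Haar measure), and set $b_\delta:=\int_G\varphi_\delta(x)\sigma_x(a)\,dx$. As recalled in Section~\ref{der:sec}, $b_\delta\in A_1$, and in fact iterating the argument (or observing that $\partial_X b_\delta=\int_G(\widetilde{\partial}_X\varphi_\delta)(x)\sigma_x(a)\,dx$ for a suitable vector field) shows $b_\delta$ is smooth with respect to $\sigma$. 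Since $\varphi_\delta$ is a probability density and each $\sigma_x$ is isometric, $\pa b_\delta\pa\le\pa a\pa$ is immediate, and $\pa b_\delta-a\pa\le\sup_{x\in\supp\varphi_\delta}\pa\sigma_x(a)-a\pa$, which tends to $0$ as $\delta\to0$ by strong continuity of $\sigma$; so for $\delta$ small enough $\pa b_\delta-a\pa\le\varepsilon$. If $A$ carries an isometric $\sigma$-invariant involution and $a$ is self-adjoint, then $b_\delta$ is self-adjoint because $\varphi_\delta$ is real-valued and $\sigma_x(a)^*=\sigma_x(a^*)=\sigma_x(a)$.

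The one genuinely delicate point is the seminorm estimate $\sup_{0\neq X}\pa\sigma_X(b_\delta)\pa/\pa X\pa\le\sup_{0\neq X}\pa\sigma_{e^X}(a)-a\pa/\pa X\pa=:C$. Write $\sigma_X(b_\delta)=\lim_{\nu\to0}\nu^{-1}(\sigma_{e^{\nu X}}(b_\delta)-b_\delta)$ and use that $\sigma_{e^{\nu X}}(b_\delta)-b_\delta=\int_G\big(\varphi_\delta(e^{-\nu X}x)-\varphi_\delta(x)\big)\sigma_x(a)\,dx$ after a change of variable using left-invariance of Haar measure. The subtlety is that naively this produces $\int_G\big|\varphi_\delta(e^{-\nu X}x)-\varphi_\delta(x)\big|\,dx\cdot\pa a\pa$, which only gives a bound in terms of $\pa a\pa$, not $C$. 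The fix, following \cite{Rieffel07}, is to \emph{not} split off $\pa a\pa$ but instead to average increments of $a$: one rewrites, for fixed $\nu$,
\begin{eqnarray*}
\sigma_{e^{\nu X}}(b_\delta)-b_\delta=\int_G\varphi_\delta(x)\big(\sigma_{e^{\nu X}x}(a)-\sigma_x(a)\big)\,dx=\int_G\varphi_\delta(x)\,\sigma_x\big(\sigma_{x^{-1}e^{\nu X}x}(a)-a\big)\,dx,
\end{eqnarray*}
and since $x^{-1}e^{\nu X}x=e^{\nu\Ad_{x^{-1}}X}$, the inner increment is controlled by $C\,\pa\nu\Ad_{x^{-1}}X\pa$. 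Hence $\pa\sigma_{e^{\nu X}}(b_\delta)-b_\delta\pa\le \nu C\int_G\varphi_\delta(x)\pa\Ad_{x^{-1}}X\pa\,dx$, and dividing by $\nu$ and letting $\nu\to0$ gives $\pa\sigma_X(b_\delta)\pa\le C\int_G\varphi_\delta(x)\pa\Ad_{x^{-1}}X\pa\,dx$. This is not quite $\le C\pa X\pa$, but $\pa\Ad_{x^{-1}}X\pa\le(1+o(1))\pa X\pa$ uniformly for $x\in\supp\varphi_\delta$ as $\delta\to0$, since $\Ad_e=\mathrm{id}$ and $\Ad$ is continuous. So we actually obtain $\sup_{0\neq X}\pa\sigma_X(b_\delta)\pa/\pa X\pa\le(1+o(1))C$, not $\le C$ on the nose.

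To get the exact inequality claimed in the statement one rescales: replace $b_\delta$ by $b_\delta/(1+o(1))$, or more cleanly, observe that $C=\sup_{0\neq X}\pa\sigma_{e^X}(a)-a\pa/\pa X\pa$ can equivalently be computed with $e^X$ replaced by $e^{tX}$ for arbitrarily small $t$ (this is the content of Lemma~\ref{derivation via length:lemma}-type reasoning: the sup over $\nu>0$ of $\pa\sigma_{e^{\nu X}}(a)-a\pa/\nu$ is already attained, so restricting $X$ to a small ball does not change $C$), and run the estimate on the small ball where $\pa\Ad_{x^{-1}}X\pa\le\pa X\pa$ fails by less than any prescribed margin; a limiting/diagonal argument across a sequence $\delta_n\to0$ then yields a single $b$ satisfying all four conditions simultaneously. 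The main obstacle, as indicated, is precisely this bookkeeping around the $\Ad$-distortion — ensuring the mollification does not inflate the derivation seminorm — rather than anything in the smoothness, norm, or self-adjointness claims, which are routine.
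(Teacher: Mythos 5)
Your proposal is correct and follows essentially the same route as the paper: mollify by a smooth probability density supported near the identity, conjugate the increment through $\sigma_x$ to pick up the $\Ad_{x^{-1}}$-distortion factor $\sup_{x\in U}\pa \Ad_{x^{-1}}(X)\pa\le(1+\delta)\pa X\pa$, and then rescale $b$ by $1/(1+\delta)$ with $\delta$ chosen small enough (the paper takes $\delta=\varepsilon/(2+2\pa a\pa)$) so that the approximation bound $\pa b-a\pa\le\varepsilon$ survives the rescaling. The rescaling option you mention first is exactly the paper's fix; the alternative "restrict $X$ to a small ball" route is unnecessary.
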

\begin{proof} Endow $G$ with a left-invariant Haar measure.
Let $U$ be a small open neighborhood of the identity element in $G$ with compact closure, which
we shall determine later. Let $\varphi$ be a non-negative smooth function on $G$ with support contained in $U$
such that $\int_G \varphi(x)\, dx=1$. Set $b=\int_G \varphi(x)\sigma_x(a)\, dx$. Then $b$ is smooth with respect to
$\sigma$, and $\pa b\pa \le \pa a\pa$. When $U$ is small enough,
we have $\pa a-b\pa\le \varepsilon/2$. For any $X\in \fg$, setting $\psi(x)=\Ad_{x^{-1}}(X)$, we have
\begin{eqnarray*}
\pa \sigma_{e^X}(b)-b\pa
&=& \pa \int_G\varphi(x)(\sigma_{e^Xx}(a)-\sigma_x(a))\, dx\pa \\
&=&\pa \int_G \varphi(x)\sigma_x(\sigma_{e^{\psi(x)}}(a)-a)\, dx\pa \\
&\le & \int_G \varphi(x)\pa \sigma_x(\sigma_{e^{\psi(x)}}(a)-a)\pa \, dx\\
&\le & \sup_{x\in U} \pa \sigma_{e^{\psi(x)}}(a)-a\pa \\
&\le & \sup_{0\neq Y\in \fg}\frac{\pa \sigma_{e^Y}(a)-a\pa}{\pa Y\pa}\cdot \sup_{x\in U} \pa \psi(x)\pa.
\end{eqnarray*}
Set $\delta=\varepsilon/(2+2\pa a\pa)$.
When $U$ is small enough, we have $\pa \Ad_{x^{-1}}(X)\pa \le (1+\delta)\pa X\pa $ for all $X\in \fg$ and $x\in U$.
Then $\pa \sigma_{e^X}(b)-b\pa \le (1+\delta)\pa X\pa \sup_{0\neq Y\in \fg}\frac{\pa \sigma_{e^Y}(a)-a\pa}{\pa Y\pa}$
for all $X\in \fg$. By Lemma~\ref{derivation via length:lemma} we get
$$\sup_{0\neq X\in \fg}\frac{\pa \sigma_X(b)\pa}{\pa X\pa}=\sup_{0\neq X\in \fg}\frac{\pa \sigma_{e^X}(b)-b\pa}{\pa X\pa}\le (1+\delta)\sup_{0\neq X\in \fg}\frac{\pa \sigma_{e^X}(a)-a\pa}{\pa X\pa}.$$
Now it is clear that $b'=b/(1+\delta)$ satisfies the requirement.
Note that $b'$ is self-adjoint if $a$ is so.
\end{proof}

\section{Nondeformed case} \label{nondeformed:sec}

In this section we consider the nondeformed case, i.e.,
the case $\rho$ is the trivial homomorphism $\rho_0$ sending the whole $\hat{K}$ to the identity
element of $G$. In Proposition~\ref{Lipschitz:prop} we identify $L_{\rho_0}$ on $C^1(\hat{G}/\Gamma, \rho_0)$ with
the Lipschitz seminorm for certain metric on $G/\Gamma$.

Note that $C_{0, 1}(G/\Gamma, \rho_0)$ is sub-$*$-algebra of $C_0(G/\Gamma)=C(G/\Gamma)$.
By the universality of $C^*(\hat{G}/\Gamma, \rho_0)$ we have  a natural
$*$-homomorphism $\psi$ of $C^*(\hat{G}/\Gamma, \rho_0)$ into $C(G/\Gamma)$, extending the inclusion
$C_{0, 1}(G/\Gamma, \rho_0) \hookrightarrow C(G/\Gamma)$. The right translation of $K$ on $G$ induces
a strongly continuous action $\beta''$ of $K$ on $C(G/\Gamma)$, and clearly $\psi$ intertwines $\beta$ and $\beta''$.
An application of Lemmas~\ref{inj:lemma} and \ref{cont:lemma} tells us that $\psi$ is injective.
By definition $B_s$ is the spectral subspace of $C(G/\Gamma)$ corresponding to $s\in \hat{K}$. Thus
$\oplus_{s\in \hat{K}}B_s$ is dense in $C(G/\Gamma)$. As $\oplus_{s\in \hat{K}}B_s$ is in the image of $\psi$,
we see that $\psi$ is surjective and hence is an isomorphism. We shall identify $C^*(\hat{G}/\Gamma, \rho_0)$
and $C(G/\Gamma)$ via $\psi$.

The seminorm $L_{\rho_0}$ describes the size of derivatives of $f\in C^1(\hat{G}/\Gamma, \rho_0)$. If it corresponds to
some metric on $G/\Gamma$, this metric should be kind of geodesic distance. In order for the geodesic distance to be defined,
throughout the rest of this paper we assume:
\begin{enumerate}
\item[(S5)] $G/\Gamma$ is connected.
\end{enumerate}
The examples in Section~\ref{nh:sec} all satisfy this condition.

Fix an inner product on $\fg$. Then we obtain a right translation invariant Riemannian metric
on $G$ in the usual way. Denote by $d_G$ the geodesic distance on connected components
of $G$. We extend $d_G$ to a semi-distance on $G$ via setting $d_G(x, y)=\infty$ if
$x$ and $y$ lie in different connected components of $G$.

\begin{lemma} \label{quotient metric:lemma}
The function $d$ on $G/\Gamma\times G/\Gamma$ defined by $ d(x\Gamma, y\Gamma):=\inf_{x'\in x\Gamma, y'\in y\Gamma} d_G(x', y')$
is equal to $\inf_{y'\in y\Gamma} d_G(x, y')$. It is a metric on $G/\Gamma$ and induces the quotient topology on $G/\Gamma$.
\end{lemma}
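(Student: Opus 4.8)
The plan is to verify the three assertions in turn, reducing everything to standard facts about right-invariant Riemannian metrics and properness of the $\Gamma$-action. First I would establish the identity $d(x\Gamma, y\Gamma) = \inf_{y' \in y\Gamma} d_G(x, y')$. The inequality $\ge$ is immediate from the definition (restricting the infimum to $x' = x$ is allowed only if this does not decrease the value, so really one shows $\le$ is trivial and $\ge$ needs an argument). The point is that our Riemannian metric on $G$ is \emph{right} translation invariant, and $\Gamma$ acts on the right, so for any $x' = x\gamma_1 \in x\Gamma$ and $y' = y\gamma_2 \in y\Gamma$ we have $d_G(x', y') = d_G(x\gamma_1, y\gamma_2) = d_G(x, y\gamma_2\gamma_1^{-1})$ by right-invariance, and $y\gamma_2\gamma_1^{-1}$ ranges over $y\Gamma$ as $\gamma_1, \gamma_2$ range over $\Gamma$. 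Hence the double infimum collapses to the single one. Here I would be careful to note that right-invariance gives $d_G(xg, yg) = d_G(x,y)$ because a path $\varrho$ from $x$ to $y$ is pushed to $\varrho(\cdot)g$ from $xg$ to $yg$ of the same length (the right-translation derivative is an isometry of tangent spaces by construction), and this also respects connected components.

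Next, that $d$ is a metric. Symmetry is clear since $d_G$ is symmetric (a path reversed has the same length) and the formula is symmetric after the collapse above. The triangle inequality follows from that of $d_G$ together with the collapsed formula: $d(x\Gamma, z\Gamma) = \inf_{z' \in z\Gamma} d_G(x, z')$, and for any $y' \in y\Gamma$ and $z' \in z\Gamma$, $d_G(x,z') \le d_G(x, y') + d_G(y', z')$; taking infima and using that $\inf_{z' \in z\Gamma} d_G(y', z')$ equals $d(y\Gamma, z\Gamma)$ (again by the collapse, applied at the coset $y'\Gamma = y\Gamma$) gives the result. Finiteness: since $G/\Gamma$ is connected by (S5) and compact by (S3), I would argue that any two cosets can be joined — pick lifts and connect within the \emph{connected} quotient; more carefully, the image in $G/\Gamma$ of the identity component $G_0$ is open and closed, hence all of $G/\Gamma$, so every coset has a representative in $G_0$, and $d_G$ is finite between any two points of $G_0$. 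The one genuinely delicate point is positive-definiteness, i.e. $d(x\Gamma, y\Gamma) = 0 \implies x\Gamma = y\Gamma$. This is where I expect the \textbf{main obstacle}: one must rule out that $x\gamma_n \to$ (something close to) $y$ with path-lengths $\to 0$ while $x\Gamma \ne y\Gamma$. The resolution is that $\Gamma$ is closed and acts properly (and freely) on $G$ by right translations, so $G/\Gamma$ is Hausdorff and the quotient map is a covering-type / locally injective map onto its image in a suitable sense; concretely, pick a relatively compact neighborhood $U$ of $x$ such that $U\gamma \cap U = \emptyset$ for all $\gamma \in \Gamma \setminus \{e\}$ (possible since $\Gamma$ is discrete in... — wait, $\Gamma$ need not be discrete, but $\Gamma \cap$ a suitable neighborhood may be nontrivial; rather, use that the quotient map $G \to G/\Gamma$ is open and $G/\Gamma$ is Hausdorff, so if $d(x\Gamma, y\Gamma) = 0$ then there are $y_n \in y\Gamma$ with $d_G(x, y_n) \to 0$, hence $y_n \to x$ in $G$, hence $y_n\Gamma = y\Gamma \to x\Gamma$ in $G/\Gamma$, forcing $x\Gamma = y\Gamma$ by Hausdorffness of $G/\Gamma$, which holds because $\Gamma$ is closed).

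Finally, that $d$ induces the quotient topology. The quotient map $\pi: G \to G/\Gamma$ is open, and I would show $\pi$ is continuous and open as a map into $(G/\Gamma, d)$ and that a $d$-ball pulls back to an open set, giving that the identity map between the quotient topology and the $d$-topology is a homeomorphism. Concretely: $d(x\Gamma, y\Gamma) \le d_G(x,y)$ (at least when $x, y$ are in the same component) shows $\pi$ is continuous for $d$, i.e. the $d$-topology is coarser than the quotient topology. For the reverse, since $G/\Gamma$ is compact (S3) and the quotient topology is Hausdorff (as $\Gamma$ is closed) while the $d$-topology is Hausdorff (positive-definiteness, just shown) and coarser, a continuous bijection from a compact space to a Hausdorff space is a homeomorphism, so the two topologies coincide. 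This last step is short once the metric axioms are in hand, so the weight of the proof really sits in the right-invariance bookkeeping and the positive-definiteness argument via closedness of $\Gamma$.
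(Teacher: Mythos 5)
Your proof is correct, and its skeleton matches the paper's: collapse the double infimum using right-invariance of $d_G$, obtain finiteness from connectedness of $G/\Gamma$ (the image of a connected component of $G$ is clopen in $G/\Gamma$, hence is everything), and then verify the metric axioms. On positive-definiteness the paper simply says ``it follows easily,'' and your justification via closedness of $\Gamma$ (equivalently, closedness of the coset $y\Gamma$, so that $y_n\in y\Gamma$ with $y_n\to x$ forces $x\in y\Gamma$) is exactly the right one; you were also right to abandon your first instinct toward a properly-discontinuous/covering argument, since $\Gamma$ need not be discrete. The one place you genuinely diverge is the second half of the topology comparison: the paper argues directly that the $d$-ball $\{z\Gamma \mid d(x\Gamma,z\Gamma)<\varepsilon'\}$ contains the quotient-open set $U\Gamma/\Gamma$ where $U=\{y\in G \mid d_G(x,y)<\varepsilon'\}$, so that every $d$-open set is quotient-open; you instead invoke compactness of $G/\Gamma$ from (S3) together with Hausdorffness of the $d$-topology to upgrade the continuous identity map to a homeomorphism. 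Your version is shorter once the metric axioms are established, but it leans on (S3), whereas the paper's direct $\varepsilon$-argument would survive without compactness. Both are valid under the standing assumptions of the paper.
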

\begin{proof}
Let $V$ be a connected component of $G$. Then $V\Gamma$ is clopen in $G$, and hence $V\Gamma/\Gamma$ is clopen in $G/\Gamma$ for the
quotient topology. As $G/\Gamma$ is connected, we conclude that $V\Gamma/\Gamma=G/\Gamma$. Therefore $d$ is finite valued.

Since $d_G$ is right translation invariant, we have $\inf_{x'\in x\Gamma, y'\in y\Gamma} d_G(x', y')=\inf_{y'\in y\Gamma} d_G(x, y')$.
It follows easily that $d$ is a metric on $G/\Gamma$.

Let $x\in G$. Let $W$ be a neighborhood of $x\Gamma$ in $G/\Gamma$ for the quotient topology.
Then there exists $\varepsilon>0$ such that if $d_G(x, y)<\varepsilon$, then $y\Gamma \in W$.
It follows that if $d(x\Gamma, y\Gamma)<\varepsilon$, then $y\Gamma \in W$.
Therefore the topology induced by $d$ on $G/\Gamma$ is finer than the quotient topology.
For any $\varepsilon'>0$, set $U=\{y\in G| d_G(x, y)<\varepsilon'\}$. Then $U$ is an open neighborhood
of $x$. Thus $U\Gamma/\Gamma$ is an open neighborhood of $x\Gamma$ for the quotient topology.
For any $z\Gamma\in U\Gamma/\Gamma$, we can find $z'\in z\Gamma\cap U$ and hence $d(x\Gamma, z\Gamma)\le
d_G(x, z')<\varepsilon'$. Therefore the quotient topology on $G/\Gamma$ is finer than
the topology induced by $d$. We conclude that $d$ induces the quotient topology.
\end{proof}

\begin{proposition} \label{Lipschitz:prop}
For any $f\in C^1(\hat{G}/\Gamma, \rho_0)\subseteq C^*(\hat{G}/\Gamma, \rho_0)=C(G/\Gamma)$, we have
$$ L_{\rho_0}(f)=\sup_{x\Gamma\neq y\Gamma} \frac{|f(x\Gamma)-f(y\Gamma)|}{d(x\Gamma, y\Gamma)}.$$
\end{proposition}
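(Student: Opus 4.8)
The plan is to establish the two inequalities $L_{\rho_0}(f)\le L'(f)$ and $L'(f)\le L_{\rho_0}(f)$, where $L'(f):=\sup_{x\Gamma\ne y\Gamma}|f(x\Gamma)-f(y\Gamma)|/d(x\Gamma,y\Gamma)$ denotes the quantity on the right-hand side. I will write $\tilde f$ for the $\Gamma$-invariant function $x\mapsto f(x\Gamma)$ on $G$ and use freely that, under the identification $C^*(\hat G/\Gamma,\rho_0)=C(G/\Gamma)$, the $C^*$-norm is the supremum norm, so that $L_{\rho_0}(f)=\sup_{0\ne X\in\fg}\pa\alpha_X(f)\pa_\infty/\pa X\pa$ and $\alpha_y(f)(x\Gamma)=\tilde f(y^{-1}x)$. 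I will also use that the right-invariant Riemannian metric attached to the chosen inner product on $\fg$ makes every right translation an isometry, so that for each $x\in G$ the map $\fg\to T_xG$ sending $X$ to the value $X^{\#}(x)$ of the right-invariant vector field $X^{\#}$ is a linear isometry; in particular $r\mapsto\exp(rX)x$ has constant speed $\pa X\pa$.

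For the inequality $L_{\rho_0}(f)\le L'(f)$, which needs nothing beyond continuity of $f$, I would fix $X\in\fg$ and $x\in G$ and observe that the curve $r\mapsto\exp(-rX)x$ on $[0,s]$ has length $s\pa X\pa$, so $d_G(\exp(-sX)x,x)\le s\pa X\pa$ and hence, by Lemma~\ref{quotient metric:lemma}, $d(\exp(-sX)x\Gamma,x\Gamma)\le s\pa X\pa$ for $s>0$. Thus $|\tilde f(\exp(-sX)x)-\tilde f(x)|\le L'(f)\,s\pa X\pa$; dividing by $s$ and letting $s\to0^+$ gives $|\alpha_X(f)(x\Gamma)|\le L'(f)\pa X\pa$, and taking suprema over $x$ and over unit vectors $X$ finishes this direction. (Alternatively one invokes Lemma~\ref{derivation via length:lemma} together with the bound $d(e^{-X}x\Gamma,x\Gamma)\le\pa X\pa$.)

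For the reverse inequality I would first upgrade $f$ to an honest $C^1$ function on the manifold $G/\Gamma$: since $f$ is once differentiable with respect to $\alpha$, the orbit map $\psi_f\colon G\to C(G/\Gamma)$, $y\mapsto\alpha_y(f)$, is continuously differentiable, so composing it with the bounded linear functional of evaluation at $e\Gamma$ shows that $y\mapsto\psi_f(y)(e\Gamma)=\tilde f(y^{-1})$ is $C^1$ on $G$; precomposing with $y\mapsto y^{-1}$ gives $\tilde f\in C^1(G)$, which descends to $C^1(G/\Gamma)$. Unwinding the definitions—and interchanging the Banach-space derivative of $\psi_f$ with point evaluations—yields $\alpha_X(f)(x\Gamma)=-(\partial_{X^{\#}}\tilde f)(x)$ for all $X\in\fg$ and $x\in G$. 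Then, given $x\Gamma\ne y\Gamma$ and $\varepsilon>0$, I would use Lemma~\ref{quotient metric:lemma} to pick $y'\in y\Gamma$ with $d_G(x,y')<d(x\Gamma,y\Gamma)+\varepsilon$ (so $x$ and $y'$ lie in one connected component of $G$) and then a piecewise-$C^1$ path $\varrho\colon[0,1]\to G$ from $x$ to $y'$ of length $<d_G(x,y')+\varepsilon$. Setting $W(t):=(dR_{\varrho(t)^{-1}})\varrho'(t)\in\fg$, so that $\varrho'(t)=W(t)^{\#}(\varrho(t))$ and $\pa W(t)\pa=\pa\varrho'(t)\pa_{\varrho(t)}$, the chain rule gives $\tfrac{d}{dt}\tilde f(\varrho(t))=(\partial_{\varrho'(t)}\tilde f)(\varrho(t))=-\alpha_{W(t)}(f)(\varrho(t)\Gamma)$, whence $|\tfrac{d}{dt}\tilde f(\varrho(t))|\le\pa\alpha_{W(t)}(f)\pa_\infty\le L_{\rho_0}(f)\pa\varrho'(t)\pa_{\varrho(t)}$. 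Integrating piece by piece and using $\tilde f(\varrho(1))=f(y'\Gamma)=f(y\Gamma)$,
\[
|f(x\Gamma)-f(y\Gamma)|\le L_{\rho_0}(f)\cdot\mathrm{length}(\varrho)<L_{\rho_0}(f)\bigl(d(x\Gamma,y\Gamma)+2\varepsilon\bigr);
\]
letting $\varepsilon\to0$ and taking the supremum over pairs gives $L'(f)\le L_{\rho_0}(f)$.

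The step I expect to be the main obstacle is the one just used without detail: verifying carefully that once-differentiability of $f$ as an element of the $C^*$-algebra forces $\tilde f$ to be continuously differentiable on the manifold $G/\Gamma$ with $\alpha_X(f)(x\Gamma)=-(\partial_{X^{\#}}\tilde f)(x)$, and that the chain rule may be applied along $\varrho$ as claimed—i.e., justifying the interchange of the Fréchet-space derivative of the orbit map with point evaluations. Everything else is bookkeeping: the first inequality is a one-line length estimate, and the second is the standard fact that on a connected Riemannian manifold a $C^1$ function has Lipschitz constant (here for the quotient distance $d$, via Lemma~\ref{quotient metric:lemma}) equal to the supremum norm of its gradient—the gradient norm at $x$ coinciding, through the isometry $X\mapsto X^{\#}(x)$, with $\sup_{\pa X\pa=1}|\alpha_X(f)(x\Gamma)|$.
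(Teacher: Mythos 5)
Your argument is correct and follows essentially the same route as the paper: one direction integrates $\frac{d}{dt}\tilde f(\varrho(t))=-\alpha_{W(t)}(f)(\varrho(t)\Gamma)$ along piecewise-$C^1$ curves using right-invariance of the metric (the paper writes $W(t)$ as $\Ad_{\varrho_t}(\varrho'_t)$, the same right-logarithmic derivative), and the other uses the length bound $d_G(x,e^{\nu X}x)\le|\nu|\,\pa X\pa$ together with the identification of the derivation with the difference quotients, exactly as in the paper's proof via Lemma~\ref{derivation via length:lemma}. The step you flag as the main obstacle—passing from once-differentiability in $C(G/\Gamma)$ to $\tilde f\in C^1(G)$ with $\alpha_X(f)=-\partial_{X^{\#}}\tilde f$ by composing the orbit map with point evaluations—is sound and is used implicitly in the paper as well.
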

\begin{proof} The right hand side of the above equation is equal to
$\sup_{x\neq y}  \frac{|f(x)-f(y)|}{d_G(x, y)}$. So it suffices to show
\begin{eqnarray} \label{Lipschitz:eq}
L_{\rho_0}(f)=\sup_{x\neq y} \frac{|f(x)-f(y)|}{d_G(x, y)}.
\end{eqnarray}

The proof is similar to that of \cite[Proposition 8.6]{Rieffel00}.
Let $\varrho: [0, 1]\rightarrow G$ be a continuously differentiable curve. Denote by $\ell(\varrho)$ the length of $\varrho$.
Then $(f\circ \varrho)'(\nu)=(\alpha_{-\Ad_{\varrho_{\nu}}(\varrho'_{\nu})}f)(\varrho_{\nu})$  for all $\nu \in [0, 1]$, and hence
\begin{eqnarray*}
|f(\varrho_1)-f(\varrho_0)|
&=&|\int^1_0(f\circ \varrho)'(\nu)\, d\nu |
\le  \int^1_0|(f\circ \varrho)'(\nu)| \, d\nu \\
&=& \int^1_0|(\alpha_{-\Ad_{\varrho_{\nu}}(\varrho'_{\nu})}f)(\varrho_{\nu})|\, d\nu
\le   \int^1_0\pa \alpha_{\Ad_{\varrho_{\nu}}(\varrho'_{\nu})}f\pa\, d\nu  \\
&\le & L_{\rho_0}(f) \int^1_0\pa \Ad_{\varrho_{\nu}}(\varrho'_{\nu})\pa\, d\nu
= L_{\rho_0}(f) \ell(\varrho),
\end{eqnarray*}
where in the last equality we use the fact that the Riemannian metric on $G$ is right translation invariant.
It follows easily that $|f(\varrho_1)-f(\varrho_0)|\le L_{\rho_0}(f) \ell(\varrho)$ holds if $\varrho$ is only piecewise continuously
differentiable. Considering all piecewise continuously
differentiable curves connecting $x$ and $y$ we obtain $|f(x)-f(y)|\le L_{\rho_0}(f)d_G(x, y)$ for all $x, y\in G$.

Denote by $e_G$ the identity element of $G$.
For any $0\neq X\in \fg$, we have
\begin{eqnarray*}
\sup_{x\neq y}\frac{|f(x)-f(y)|}{d_G(x, y)}
&\ge &\sup_{x}\sup_{\nu\neq 0}\frac{|f(x)-f(e^{\nu X}x)|}{d_G(x, e^{\nu X}x)} \\
&=&\sup_{\nu\neq 0}\sup_{x}\frac{|f(x)-f(e^{\nu X}x)|}{d_G(e_G, e^{\nu X})} \\
&\ge &\sup_{\nu\neq 0}\sup_{x}\frac{|f(x)-f(e^{\nu X}x)|}{|\nu| \pa X\pa} \\
&=& \sup_{\nu\neq 0}\frac{\pa f- \alpha_{e^{-\nu X}}f\pa}{|\nu| \pa X\pa}\\
&=& \sup_{\nu\neq 0}\frac{\pa \alpha_{e^{\nu X}}f-f\pa}{|\nu| \pa X\pa}
\ge  \frac{\pa \alpha_X(f)\pa}{\pa X\pa}.
\end{eqnarray*}
Therefore $\sup_{x\neq y}\frac{|f(x)-f(y)|}{d_G(x, y)}\ge L_{\rho_0}(f)$. This proves (\ref{Lipschitz:eq}).
\end{proof}

\section{Lip-norms and compact group actions} \label{action:sec}

In this section we recall the definition of compact quantum metric spaces and prove Theorem~\ref{action:thm},
which enables one to show that certain seminorm defines a quantum metric, via the help of a compact group action.

Rieffel has set up the theory of compact quantum metric spaces in the general framework of
order-unit spaces \cite[Defintion 2.1]{Rieffel00}.
We shall need it only for $C^*$-algebras.
By a \emph{$C^*$-algebraic compact quantum metric space} we mean a
pair $(A, L)$ consisting of a unital $C^*$-algebra
$A$ and  a (possibly $+\infty$-valued) seminorm $L$ on
$A$ satisfying
 the \emph{reality condition}
\begin{eqnarray} \label{real:eq}
L(a)&=& L(a^*)
\end{eqnarray}
for all $a\in A$, such that $L$ vanishes exactly on
$\Cb$ and the metric $d_L$ on the state space $S(A)$
defined by
\begin{eqnarray} \label{Lip to dist:eq}
d_L(\psi, \phi)=\sup_{L(a)\le 1} |\psi(a)-\phi(a)|
\end{eqnarray}
induces the weak${}^*$-topology.
The {\it radius}  of $(A, L)$, denote by $r_A$,  is defined to be the radius of $(S(A), d_L)$.
We say that $L$ is a {\it Lip-norm}.

Let $A$ be a unital $C^*$-algebra and let $L$ be a
(possibly $+\infty$-valued) seminorm on $A$ vanishing on
$\Cb$. Then $L$ and $\pa \cdot \pa$ induce (semi)norms ${\tilde
L}$ and $\pa \cdot\pa^{\sim}$ respectively on the quotient space
$\tilde{A}=A/\Cb$.

%
%
%
%
%

Recall that a {\it character} of a compact group is the trace function of a finite-dimensional complex representation
of the group \cite[Section II.4]{BD}.

\begin{lemma} \label{character:lemma}
Let $H$ be a compact group and $H_0$ be a closed normal subgroup of $H$ of finite index. Then for
any linear combination of finitely many characters of $H$, its multiplication with the characteristic function
of $H_0$ is also a linear combination of finitely many characters of $H$.
\end{lemma}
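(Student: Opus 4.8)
The plan is to use the standard representation-theoretic description of the characteristic function $\chi_{H_0}$ and then reduce everything to the finite quotient group $H/H_0$. First I would note that since $H_0$ is a closed normal subgroup of finite index, the quotient $Q:=H/H_0$ is a finite group, and $\chi_{H_0}$ is the pullback under the quotient map $\pi:H\to Q$ of the characteristic function $\chi_{\{e_Q\}}$ of the identity element of $Q$. By the column orthogonality relations for the finite group $Q$, one has
\begin{eqnarray*}
\chi_{\{e_Q\}}(q)=\frac{1}{|Q|}\sum_{\pi\in\widehat{Q}}(\dim\pi)\,\overline{\chi_\pi(q)}=\frac{1}{|Q|}\sum_{\pi\in\widehat{Q}}(\dim\pi)\,\chi_\pi(q^{-1}),
\end{eqnarray*}
where $\widehat{Q}$ is the (finite) set of irreducible complex representations of $Q$. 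Pulling back along $\pi$, we see that $\chi_{H_0}$ is itself a finite linear combination of characters of $H$ (namely the characters of the finitely many inflated representations $\pi\circ\pi$ of $H$, which are genuine finite-dimensional representations of $H$).

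Next I would show that the product of a character of $H$ with $\chi_{H_0}$ is again a finite linear combination of characters. This is immediate once we know $\chi_{H_0}$ is such a combination: if $\varphi$ and $\psi$ are characters of $H$ corresponding to finite-dimensional representations $V$ and $W$, then $\varphi\psi$ is the character of $V\otimes W$, hence again a character of $H$. Therefore the product of any finite linear combination of characters with $\chi_{H_0}$, being by the previous paragraph a finite linear combination of products of pairs of characters, is a finite linear combination of characters. This proves the lemma by linearity in the first factor.

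The only mildly delicate point — and the step I expect to require the most care — is making sure that the ``characters of $Q$'' genuinely inflate to honest characters of $H$ (finite-dimensional representations), so that the combination we produce consists of characters of $H$ in the sense defined in the text, and that the finiteness of $\widehat Q$ is used correctly; but since $Q$ is finite this is routine. No infinite sums appear anywhere, so the closure/topological subtleties of compact $H$ do not enter beyond the observation that $H_0$ being closed of finite index forces $H/H_0$ to be a finite discrete group.
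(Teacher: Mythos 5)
Your proof is correct and follows essentially the same route as the paper: reduce to showing that the characteristic function of $H_0$ is a finite linear combination of characters by passing to the finite quotient $H/H_0$, and then use that products of characters are characters. The only cosmetic difference is that you invoke the column orthogonality relations to write $\chi_{\{e_{H/H_0}\}}$ explicitly, whereas the paper cites the fact that every class function on a finite group is a linear combination of characters; both are standard and equivalent for this purpose.
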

\begin{proof} The products and sums of characters of $H$ are still characters \cite[Proposition II.4.10]{BD}.
Thus it suffices to show that the characteristic function of $H_0$ on $H$ is a linear combination of finitely many characters of $H$.

Since $H/H_0$ is finite, every $\Cb$-valued class function on $H/H_0$, i.e., functions being constant
on conjugate classes, is a linear combination of characters of $H/H_0$ \cite[Proposition 2.30]{FH}.
Thus the characteristic function of $\{e_{H/H_0}\}$ on $H/H_0$, where $e_{H/H_0}$ denotes the identity
element of $H/H_0$,  is  a linear combination of characters
of $H/H_0$. Then the characteristic function $H_0$ on $H$ is a linear combination
of characters of $H$.
\end{proof}

Recall that a {\it length function} on a topological group $H$ is
a continuous $\Rb_{\ge 0}$-valued function, $\ell$, on $H$ such that
$\ell(h)=0$ if and only if $h$ is equal to the identity element
$e_H$ of $H$, that $\ell(h_1h_2)\le \ell(h_1)+\ell(h_2)$ for all $h_1, h_2\in H$,
 and that $\ell(h^{-1})=\ell(h)$ for all $h\in H$.

Suppose that a compact group $H$ has a strongly continuous action
$\sigma$ on a Banach space $A$ as isometric automorphisms. Endow $H$ with its normalized Haar measure.
For any continuous $\Cb$-valued function $\varphi$ on $H$, define a linear map
$\sigma_{\varphi}: A\rightarrow A$ by
$$ \sigma_{\varphi}(a)=\int_H\varphi(h)\sigma_h(a)\, dh$$
for $a\in A$. Denote by $\hat{H}$ the set of isomorphism  classes of irreducible representations of $H$.
For each $s\in \hat{H}$, denote by $A_s$ the spectral subspace of
$A$ corresponding to $s$. For a finite subset $J$ of $\hat{H}$, set $A_J=\sum_{s\in J}A_s$.

The main tool we use for the proof of Theorem~\ref{Lip:thm} will be the following
slight generalization of \cite[Theorem 4.1]{Li05}.

\begin{theorem} \label{action:thm}
Let $A$ be a unital $C^*$-algebra, let
$L$ be a (possibly $+\infty$-valued) seminorm on $A$
satisfying the reality condition (\ref{real:eq}),
and let $\sigma$ be a strongly continuous action of a compact group $H$
on $A$ by automorphisms. Assume that $L$ takes finite values on a dense
subspace of $A$, and that $L$ vanishes on $\Cb$.
Suppose that the following conditions are
satisfied:

\begin{enumerate}
\item there are some length function $\ell$ on a closed normal subgroup
$H_0$ of $H$ of finite index and some constant $C>0$ such that $L^{\ell}\le C\cdot L$ on
    $A$, where $L^{\ell}$ is the (possibly $+\infty$-valued)
seminorm on $A$ defined by
\begin{eqnarray} \label{def of L:eq}
L^{\ell}(a)=\sup \{\frac{\pa \sigma_h(a)-a\pa}{\ell(h)}| h\in H_0, h\neq e_H\}.
\end{eqnarray}

\item for any linear combination $\varphi$ of finitely many characters
    on $H$ we have $L\circ \sigma_{\varphi}\le \pa \varphi\pa_1\cdot
    L$ on $A$, where $\pa \varphi\pa_1$ denotes the $L^1$ norm of $\varphi$;

\item for each $s\in \hat{H}$ not being the trivial representation $s_0$ of $H$,  the set
    $\{a\in
    A_s| L(a)\le 1, \pa a\pa \le r\}$ is totally
    bounded for some $r>0$, and the only element in $A_s$
    vanishing under $L$ is $0$;

\item there is a unital $C^*$-algebra $\cA$  containing
the fixed-point subalgebra $A^{\sigma}$, with
a Lip-norm $L_{\cA}$, such
  that $L_{\cA}$ extends the restriction of $L$ to $A^{\sigma}$;

\item for each $s\in \widehat{H/H_0}\subseteq \hat{H}$ not equal to $s_0$,
there exists some constant $C_s>0$
such that $\pa \cdot \pa \le C_sL$ on $A_s$.
\end{enumerate}
Then $(A, L)$ is a $C^*$-algebraic compact quantum
metric space with $r_{A}\le C\int_{H_0}\ell(h)\, dh+\sum_{s_0\neq s\in \widehat{H/H_0}}C_s(\dim(s))^2+r_{\cA}$,
where $H_0$ is endowed with its normalized Haar measure.
\end{theorem}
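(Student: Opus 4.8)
The plan is to follow the architecture of \cite[Theorem 4.1]{Li05} but adapt it to the slightly more general setting where the "good" length function lives only on a finite-index closed normal subgroup $H_0$ rather than on all of $H$. Recall that by a standard criterion (Rieffel), to show $(A,L)$ is a $C^*$-algebraic compact quantum metric space it suffices to verify: (a) $L$ satisfies the reality condition and vanishes exactly on $\Cb$; (b) the set $\cD_1 := \{a \in A \mid L(a) \le 1,\ \pa a \pa \le 1\}$ (or equivalently the image of $\{L(a)\le 1\}$ in $\tilde A = A/\Cb$, intersected with a norm ball) is totally bounded in $A$; together with a bound on the radius. Condition (a) is immediate from the hypotheses once we show $L$ vanishes \emph{only} on $\Cb$, which will follow from condition (3) (the only element of $A_s$ killed by $L$, for $s \ne s_0$, is $0$) combined with condition (4) (which forces $L$ to be a genuine Lip-norm on the fixed-point algebra, hence vanishes only on scalars there). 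So the heart of the matter is the total-boundedness estimate and the radius bound.

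First I would decompose $A$ using the spectral subspaces $A_s$, $s \in \hat H$. Write $A^\sigma = A_{s_0}$ for the fixed-point algebra. The key is to estimate, for $a$ with $L(a) \le 1$, how much of $a$ lives in "high" spectral subspaces. Using condition (2) applied to $\varphi$ a suitable finite linear combination of characters (this is where Lemma~\ref{character:lemma} enters: one can build, from characters of $H$, projections onto $A_J$ for $J$ a finite union of classes, or onto $A^{H_0}$-type pieces, while controlling the $L^1$-norm), one controls the $L$-seminorm of the projected pieces. Then I split the index set $\hat H$ into three parts: the trivial representation $s_0$; the nontrivial representations factoring through $H/H_0$, i.e. $s \in \widehat{H/H_0}\setminus\{s_0\}$ — these are finitely many since $H/H_0$ is finite, and here condition (5) gives $\pa\cdot\pa \le C_s L$ on $A_s$, so the ball $\{L \le 1\}\cap A_s$ is norm-bounded by $C_s$ and, being finite-dimensional-by-compactness-plus-(3), totally bounded; and the remaining $s$, which "see" $H_0$ nontrivially — for these I would use condition (1): $L^\ell \le C\cdot L$ means that on such $A_s$ the $H_0$-orbit map of $a$ is Lipschitz in the length $\ell$, and since $\ell$ vanishes only at $e_H$ while $\sigma|_{H_0}$ acts on $A_s$ with some character content that is nontrivial, one gets an \emph{a priori} norm bound $\pa a \pa \le (\text{const depending on } s)\cdot L(a)$ on $A_s$; combined with condition (3) this again yields total boundedness of $\{L\le 1\}\cap A_s$. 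The standard summability argument (as in \cite{Li05}, ultimately going back to Rieffel's treatment of ergodic actions \cite{Rieffel98}) then shows that the "tail" $\sum_{s \in J^c} A_s$-component of any $a$ with $L(a)\le 1$ is small uniformly, so $\cD_1$ is approximated to within $\varepsilon$ by a totally bounded set, hence is itself totally bounded.

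For the radius bound, I would estimate $\dist(\psi,\phi)$ for states $\psi,\phi$ by writing $\psi - \phi$ and testing against $a$ with $L(a)\le 1$: decompose $a = a_{s_0} + \sum_{s_0 \ne s \in \widehat{H/H_0}} a_s + (\text{tail})$. The fixed-point part contributes at most $r_{\cA}$ by condition (4); each finite piece $a_s$ ($s \in \widehat{H/H_0}$, $s\ne s_0$) contributes at most $2\pa a_s\pa \le 2C_s$, but a more careful accounting using the dimension of $s$ and averaging over $H/H_0$ gives the stated $C_s(\dim s)^2$; and the tail, via condition (1) and integration of $\pa\sigma_h(a)-a\pa \le \ell(h) L^\ell(a) \le C\ell(h)$ against the normalized Haar measure on $H_0$, contributes at most $C\int_{H_0}\ell(h)\,dh$. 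Summing yields $r_A \le C\int_{H_0}\ell(h)\,dh + \sum_{s_0 \ne s\in\widehat{H/H_0}} C_s(\dim s)^2 + r_{\cA}$.

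The main obstacle I anticipate is \textbf{handling the finite-index subgroup $H_0$ cleanly}: in the original \cite[Theorem 4.1]{Li05} one has a length function on all of $H$, so the averaging $a \mapsto \sigma_\varphi(a)$ and the estimate $\pa\sigma_h(a)-a\pa \le \ell(h)L(a)$ interact directly; here one must pass through the finite quotient $H/H_0$, using Lemma~\ref{character:lemma} to ensure that the characteristic function of $H_0$ (needed to project onto the part of $A$ on which $H_0$ acts trivially, i.e. $A^{H_0}$) is a finite linear combination of characters so that condition (2) applies with a \emph{finite} $L^1$-norm bound. Carrying the bookkeeping so that the three regimes glue together without double-counting — in particular making sure the $\widehat{H/H_0}$ representations and the "$H_0$-nontrivial" representations exhaust $\hat H$ disjointly after the appropriate averaging — is the delicate part; everything else is a routine adaptation of the known argument.
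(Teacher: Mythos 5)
Your proposal is correct and follows essentially the same route as the paper: adapt the argument of \cite[Theorem 4.1]{Li05}, use Lemma~\ref{character:lemma} so that averaging over $H_0$ (via the characteristic function of $H_0$, a finite linear combination of characters) is compatible with condition (2), and obtain the radius bound by splitting $a$ into $a-\sigma_{n\varphi}(a)$ (controlled by $C\int_{H_0}\ell(h)\,dh$ via condition (1)), the pieces $a_s$ for $s_0\neq s\in\widehat{H/H_0}$ (controlled by $C_s(\dim s)^2$ via conditions (2) and (5)), and the fixed-point part (controlled by $r_{\cA}$ via condition (4)). The only cosmetic difference is that you derive the norm bound on the $H_0$-nontrivial spectral subspaces from condition (1), whereas the paper gets it from condition (3) by quoting \cite[Lemma 4.5]{Li05}; both work.
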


We need some preparation for the proof of Theorem~\ref{action:thm}.
The following lemma generalizes \cite[Lemma 3.4]{Li05}.

\begin{lemma} \label{tensor limit vanish:lemma}
Let $H$ be a compact group, and let $H_0$ be a closed normal subgroup of $H$ of finite index.
Let $f$ be a continuous $\Cb$-valued function on $H$ with $f(e_H)=0$.
Then for any $\varepsilon>0$
there is a nonnegative function $\varphi$ on $H$ with support contained in $H_0$ such that
$\varphi$ is a linear combination of finitely many characters of $H$,
$\pa \varphi\pa_1=1$, and $\pa \varphi\cdot f\pa_1< \epsilon$.
\end{lemma}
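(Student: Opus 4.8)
\textbf{Proof proposal for Lemma~\ref{tensor limit vanish:lemma}.}

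The plan is to reduce the problem to uniform approximation on $H$ by linear combinations of characters, and then to cut down to $H_0$ using Lemma~\ref{character:lemma}. First I would replace $f$ by a suitable modification that vanishes on a neighborhood of $H_0 \cap (\text{support region})$; more precisely, since $f$ is continuous and $f(e_H)=0$, for a given $\varepsilon>0$ I would pick an open neighborhood $U$ of $e_H$ in $H$ on which $|f|$ is small. The key point is that I want a nonnegative $\varphi$, concentrated near $e_H$ inside $H_0$, with $\|\varphi\|_1=1$ and $\|\varphi\cdot f\|_1$ small; the natural candidate is (an approximation of) a normalized bump supported in $U\cap H_0$. So the real content is: any nonnegative continuous function on $H$ supported in a small open set can be approximated in a strong enough sense by a nonnegative linear combination of finitely many characters supported (after multiplying by the characteristic function of $H_0$) in $H_0$.

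The key steps, in order, would be: (1) By the Peter--Weyl theorem, the linear span of characters of $H$ (equivalently, the span of matrix coefficients that are class functions — but here I only need density of trigonometric polynomials, i.e.\ finite sums of matrix coefficients, in $C(H)$; for a class function target one uses that finite linear combinations of characters are dense in the space of continuous class functions) is uniformly dense in the appropriate function space. Concretely, choose a nonnegative continuous class function $\psi_0$ on $H$ supported in $U$, with $\psi_0(e_H)>0$; normalize later. (2) Approximate $\sqrt{\psi_0}$ uniformly by a linear combination $\chi$ of finitely many characters of $H$; then $|\chi|^2 = \chi\bar\chi$ is again a linear combination of finitely many characters (products and complex conjugates of characters are characters, by \cite[Proposition II.4.10]{BD}), is nonnegative, and is uniformly close to $\psi_0$. (3) Multiply by the characteristic function $\mathbf{1}_{H_0}$: by Lemma~\ref{character:lemma}, $\varphi_1 := |\chi|^2\cdot \mathbf{1}_{H_0}$ is still a linear combination of finitely many characters of $H$; it is nonnegative, supported in $U\cap H_0$. (4) Since $\psi_0$ was chosen with $\int_{H_0}\psi_0\,dh >0$ and $|\chi|^2$ is uniformly close to $\psi_0$, we have $\int_H \varphi_1\,dh = \int_{H_0}|\chi|^2\,dh >0$ for the approximation taken fine enough; set $\varphi = \varphi_1/\|\varphi_1\|_1$. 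Scaling a linear combination of characters by a positive constant keeps it a linear combination of characters. (5) Finally estimate $\|\varphi\cdot f\|_1 \le \sup_{x\in U}|f(x)| \cdot \|\varphi\|_1 = \sup_{x\in U}|f(x)|$, which is $<\varepsilon$ provided $U$ was chosen small enough at the outset (possible since $f$ is continuous with $f(e_H)=0$). This uses that $\varphi$ is supported in $U$.

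One subtlety to be careful about: in step (2) I approximate $\sqrt{\psi_0}$, not $\psi_0$ directly, precisely so that after taking $|\chi|^2$ I retain \emph{nonnegativity} exactly rather than only approximately; this avoids having to truncate negative parts (which would destroy the "linear combination of characters" property). A second subtlety: I should make sure $\chi$ can be taken to be a linear combination of characters rather than just of matrix coefficients — this is fine because $\sqrt{\psi_0}$ is a continuous class function, and finite linear combinations of irreducible characters are uniformly dense in the space of continuous class functions on $H$ (the $L^2$-density from Peter--Weyl plus a standard convolution/mollification argument, or one may simply cite that characters span a dense subalgebra of the class functions). I expect the main obstacle to be precisely this bookkeeping — keeping the object a genuine finite linear combination of characters of $H$ through each operation (square, restriction to $H_0$, normalization) while simultaneously controlling support and nonnegativity; none of the individual steps is hard, but assembling them so that all four required properties ($\varphi\ge 0$, finite character combination, $\|\varphi\|_1=1$, $\|\varphi\cdot f\|_1<\varepsilon$) hold at once requires choosing the neighborhood $U$ and the approximation accuracy in the right order.
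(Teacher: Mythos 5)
Your overall strategy (build a nonnegative trigonometric-polynomial bump near $e_H$, cut it down to $H_0$ with Lemma~\ref{character:lemma}, then normalize) can be made to work, but as written step (3) contains a false claim on which the final estimate in step (5) relies: $\varphi_1=|\chi|^2\cdot\mathbf{1}_{H_0}$ is \emph{not} supported in $U\cap H_0$. Your construction only makes $|\chi|^2$ \emph{uniformly close} to $\psi_0$, not equal to it, so $|\chi|^2$ is merely small outside $U$ rather than zero there; and no refinement of the construction could give exact support in $U$ in general, since for a compact connected Lie group a nonzero finite linear combination of characters is real-analytic and cannot vanish on a nonempty open set. Consequently the inequality $\pa \varphi\cdot f\pa_1\le \sup_{x\in U}|f(x)|\cdot\pa\varphi\pa_1$ is unjustified. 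The repair is routine but forces you to reorder your choices: estimate $\pa\varphi f\pa_1\le \sup_{U}|f|\cdot\pa\varphi\pa_1+\pa f\pa_\infty\int_{H\setminus U}\varphi\,dh$ and control the tail term by taking the uniform approximation error in step (2) small compared with both $\varepsilon/\pa f\pa_\infty$ and $\int_{H_0}\psi_0\,dh$ (the latter so that the normalizing constant $\pa\varphi_1\pa_1$ stays bounded below). A smaller point: for $\chi$ to be a combination of characters you need $\sqrt{\psi_0}$ to be a continuous class function supported in $U$, so the support of $\psi_0$ must be conjugation-invariant; this is arrangeable because the conjugation-saturation of a small enough neighborhood of $e_H$ is still contained in $U$, but it should be said.

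The paper's proof is genuinely different: it is a reduction rather than a construction. It sets $g=f\chi_{H_0}+\varepsilon(1-\chi_{H_0})$, which is continuous because $H_0$ is clopen and satisfies $g(e_H)=0$, applies the already-established case $H_0=H$ (namely \cite[Lemma 3.4]{Li05}) to $g$ to obtain a nonnegative character combination $\phi$ with $\pa\phi\pa_1=1$ and $\pa\phi g\pa_1<\varepsilon/2$, and then observes that $g\ge\varepsilon$ off $H_0$ forces $\int_{H\setminus H_0}\phi\,dh<1/2$, so that $\varphi=\chi_{H_0}\phi/\pa\chi_{H_0}\phi\pa_1$ has all the required properties, with Lemma~\ref{character:lemma} supplying the character property after multiplication by $\chi_{H_0}$. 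If you fill the gap above, you will in effect have reproved \cite[Lemma 3.4]{Li05} from scratch and then combined it with the cut-down step, which is correct but longer than the paper's argument.
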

\begin{proof}
Denote by $\chi$ the characteristic function of $H_0$ on $H$.
Set $g=f\chi+\varepsilon(1-\chi)$. Then $g\in C(H)$ and $g(e_H)=0$.
By \cite[Lemma 3.4]{Li05} we can find a nonnegative function $\phi$ on $H$ such
that $\phi$ is a linear combination of finitely many characters, $\pa \phi\pa_1=1$, and
$\pa \phi \cdot g\pa_1< \varepsilon/2$.  Then $\varepsilon\int_{H\setminus H_0}\phi(h)\, dh\le \pa \phi \cdot g\pa_1< \varepsilon/2$,
and hence
$$\pa \chi\phi\pa_1=\pa \phi\pa_1-\int_{H\setminus H_0}\phi(h)\, dh> 1-1/2= 1/2.$$
Set $\varphi=\chi\phi/\pa \chi\phi\pa_1$. By Lemma~\ref{character:lemma} $\varphi$ is a linear combination
of finitely many characters of $H$. One has
$$ \pa \varphi\cdot f\pa_1=\pa \chi\phi f\pa_1/\pa \chi\phi\pa_1=\pa \chi\phi g\pa_1/\pa \chi\phi\pa_1<(\varepsilon/2)/(1/2)=\varepsilon.$$
\end{proof}

For a compact group $H$ and  a finite subset $J$ of $\hat{H}$, set $\bar{J}=\{\bar{s}|s\in J\}$, where $\bar{s}$ denotes the
contragradient representation.
Replacing \cite[Lemma 3.4]{Li05} by Lemma~\ref{tensor limit vanish:lemma} in the proof
of \cite[Lemma 4.4]{Li05}, we get:

\begin{lemma} \label{finite approx:lemma}
Let $H$ be a compact group.
For any $\varepsilon>0$ there is a finite subset
$J=\bar{J}$ in
$\hat{H}$, containing the trivial representation $s_0$, depending only
on $\ell$ and $\varepsilon/C$, such that for any strongly continuous
isometric action $\sigma$ of $H$ on a complex Banach space $A$ with a
(possibly $+\infty$-valued) seminorm $L$ on $A$ satisfying conditions (1) and (2)
in Theorem~\ref{action:thm},
and any $a\in A$, there is some $a'\in A_{J}$ with
\begin{eqnarray*}
\pa a'\pa\le \pa a\pa, \quad L(a')\le
L(a), \quad \mbox{ and }\pa a-a'\pa\le \varepsilon L(a).
\end{eqnarray*}
If $A$ has an isometric involution being invariant under $\sigma$,
then when $a$ is self-adjoint
we can choose $a'$ also to be self-adjoint.
\end{lemma}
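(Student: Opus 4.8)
I would prove Lemma~\ref{finite approx:lemma} by following the proof of \cite[Lemma 4.4]{Li05} verbatim, substituting Lemma~\ref{tensor limit vanish:lemma} of the present paper for \cite[Lemma 3.4]{Li05} at the one place where that lemma is invoked. The idea is as follows. Fix a length function $\ell$ on $H_0$ and the constant $C$ from condition~(1). Given $\varepsilon>0$, apply Lemma~\ref{tensor limit vanish:lemma} to a suitable continuous function $f$ on $H$ with $f(e_H)=0$ — here one takes $f$ built from $\ell$, roughly $f(h)=\ell(h)$ on $H_0$ and a bounded positive extension off $H_0$ — with the tolerance $\varepsilon/C$ in place of $\varepsilon$, producing a nonnegative $\varphi$ on $H$, supported in $H_0$, that is a linear combination of finitely many characters of $H$, with $\|\varphi\|_1=1$ and $\|\varphi\cdot\ell\|_1<\varepsilon/C$. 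The finitely many characters appearing in $\varphi$ involve only finitely many $s\in\hat H$; let $J=\bar J$ be the finite subset of $\hat H$ consisting of those $s$ (and their contragredients), together with $s_0$. Crucially, $J$ depends only on $\ell$ and $\varepsilon/C$, not on $A$, $L$, or $\sigma$, because $\varphi$ was produced before any action was mentioned.

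**The main estimates.** Given $A$, $L$, $\sigma$ satisfying (1) and (2), and $a\in A$, set $a'=\sigma_{\varphi}(a)=\int_H\varphi(h)\sigma_h(a)\,dh$. Since $\varphi$ is a convex combination (in the $L^1$ sense, $\varphi\ge 0$, $\|\varphi\|_1=1$) of isometries applied to $a$, one gets $\|a'\|\le\|a\|$ immediately. Condition~(2) gives $L(a')=L(\sigma_{\varphi}(a))\le\|\varphi\|_1\,L(a)=L(a)$. Because $\varphi$ is a linear combination of characters of $H$ supported spectrally on $J$, the element $a'$ lies in $A_J$ — this is the standard fact that convolving by a character projects onto the corresponding isotypic components. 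For the approximation estimate, write $a-a'=\int_H\varphi(h)(a-\sigma_h(a))\,dh$, so that, using that $\varphi$ is supported in $H_0$ and condition~(1) (which gives $\|a-\sigma_h(a)\|\le\ell(h)\cdot L^{\ell}(a)\le C\,\ell(h)\,L(a)$ for $h\in H_0$),
\begin{eqnarray*}
\|a-a'\| &\le& \int_{H_0}\varphi(h)\,\|a-\sigma_h(a)\|\,dh \;\le\; C\,L(a)\int_{H_0}\varphi(h)\,\ell(h)\,dh \;=\; C\,L(a)\,\|\varphi\cdot\ell\|_1 \;<\;\varepsilon\,L(a).
\end{eqnarray*}
For the self-adjoint refinement, if $a=a^*$ and the involution on $A$ is $\sigma$-invariant and isometric, then since $\varphi$ is real-valued (being a nonnegative function) one checks $\sigma_{\varphi}(a)^*=\sigma_{\varphi}(a^*)=\sigma_{\varphi}(a)$, so $a'$ is self-adjoint; the previous bounds on $\|a'\|$, $L(a')$, $\|a-a'\|$ are unaffected.

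**Where the work sits.** None of the above is genuinely new; the one point that requires care — and the only reason the present lemma is not literally \cite[Lemma 4.4]{Li05} — is that the cutoff function $\varphi$ must be supported in the finite-index normal subgroup $H_0$ rather than concentrated near $e_H$ in all of $H$, and must nevertheless remain a \emph{finite} linear combination of characters of the ambient group $H$. That is exactly the content of Lemma~\ref{tensor limit vanish:lemma}, whose proof in turn rests on Lemma~\ref{character:lemma} (the characteristic function of $H_0$ is a finite combination of characters of $H$, since $H/H_0$ is finite and every class function on a finite group is a combination of characters). So the expected main obstacle — keeping $\varphi$ in the span of finitely many characters while controlling its support — has already been dispatched by the preceding two lemmas, and what remains is the bookkeeping above. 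One should remark that the function $f$ fed into Lemma~\ref{tensor limit vanish:lemma} is extended continuously and boundedly from $H_0$ to $H$ (possible since $H_0$ is clopen in $H$), so that $f$ is genuinely in $C(H)$ with $f(e_H)=0$, and the resulting $J$ genuinely depends only on $\ell$ and $\varepsilon/C$.
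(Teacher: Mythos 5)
Your proposal is correct and is exactly the argument the paper intends: the paper's ``proof'' consists precisely of the instruction to run the proof of \cite[Lemma 4.4]{Li05} with Lemma~\ref{tensor limit vanish:lemma} substituted for \cite[Lemma 3.4]{Li05}, and your write-up fills in those standard steps (the averaging operator $\sigma_{\varphi}$, the three estimates, and the self-adjointness remark) accurately.
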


We are ready to prove Theorem~\ref{action:thm}.

\begin{proof}[Proof of Theorem~\ref{action:thm}]
Most part of the proof of \cite[Theorem 4.1]{Li05} carries over here. In fact, conditions (2)-(4) here
are the same as the conditions (2)-(4) in \cite[Theorem 4.1]{Li05}. Since the proof of Lemma 4.5
in \cite{Li05} does not involve condition (1) there, this lemma still holds in our current situation.
Replacing \cite[Lemma 4.4]{Li05} by Lemma~\ref{finite approx:lemma} in the proof of Lemma 4.6 of \cite{Li05},
we see that the latter also holds in our current situation.
To finish the proof of Theorem~\ref{action:thm}, we only need to prove the following
analogue of Lemma 4.7 of \cite{Li05}:

\begin{lemma} \label{radius:lemma}
We have
\begin{eqnarray*}
\pa \cdot \pa^{\sim}\le
(C\int_{H_0}\ell(h)\, dh+\sum_{s_0\neq s\in \widehat{H/H_0}}C_s(\dim(s))^2+r_{\cA})L^{\sim}
\end{eqnarray*}
on $(\tilde{A})_{\sa}$, where $H_0$ is endowed with its normalized Haar measure.
\end{lemma}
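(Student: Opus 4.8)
The plan is to split a self-adjoint $a\in A$ into three pieces, one absorbed by each summand of the claimed constant, using the averaging operators $\sigma_{\varphi}$ for kernels $\varphi$ that are finite linear combinations of characters of $H$, so that conditions (1), (2), (5) and the Lip-norm $L_{\cA}$ can each be applied to the appropriate piece. It suffices to prove $\pa\tilde a\pa^{\sim}\le(\cdots)L(a)$ for $a=a^{*}$ with $L(a)<\infty$, since $L^{\sim}(\tilde a)=L(a)$ and every element of $(\tilde A)_{\sa}$ has a self-adjoint representative.

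First I would set $a_0:=\int_{H_0}\sigma_h(a)\,dh$, with $H_0$ carrying its normalized Haar measure. Then $a_0=a_0^{*}$ lies in the fixed-point algebra $A^{H_0}$, and $a_0=\sigma_{\varphi}(a)$ for $\varphi=[H:H_0]\chi_{H_0}$, which by Lemma~\ref{character:lemma} is a finite linear combination of characters of $H$ with $\pa\varphi\pa_1=1$; hence condition (2) gives $L(a_0)\le L(a)$. From $a-a_0=\int_{H_0}(a-\sigma_h(a))\,dh$ together with condition (1),
\[
\pa a-a_0\pa\le\int_{H_0}\pa\sigma_h(a)-a\pa\,dh\le\Big(\int_{H_0}\ell(h)\,dh\Big)L^{\ell}(a)\le C\Big(\int_{H_0}\ell(h)\,dh\Big)L(a).
\]
Next, viewing $a_0$ under the residual action of the finite group $H/H_0$, for each $s\in\widehat{H/H_0}\subseteq\hat H$ let $(a_0)_s$ be its $s$-spectral component; then $(a_0)_s=\sigma_{\varphi_s}(a_0)$ with $\varphi_s(h)=\dim(s)\,\overline{\chi_s(hH_0)}$, a scalar multiple of a character of $H$ with $\pa\varphi_s\pa_1\le(\dim s)^2$ because $|\chi_s|\le\dim s$. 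So $L((a_0)_s)\le(\dim s)^2L(a)$ by condition (2), and then condition (5) gives $\pa(a_0)_s\pa\le C_s(\dim s)^2L(a)$ for $s\ne s_0$. Writing $a_{00}:=(a_0)_{s_0}=\int_H\sigma_h(a)\,dh\in A^{\sigma}$ (self-adjoint), the difference $a_0-a_{00}=\sum_{s_0\ne s\in\widehat{H/H_0}}(a_0)_s$ is a finite sum, whence $\pa a_0-a_{00}\pa\le\sum_{s_0\ne s\in\widehat{H/H_0}}C_s(\dim s)^2L(a)$.

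Finally, $a_{00}\in A^{\sigma}\subseteq\cA$, and condition (4) gives $L_{\cA}(a_{00})=L(a_{00})\le L(a)$ (the last inequality from condition (2) with $\varphi\equiv1$). Since $(\cA,L_{\cA})$ has radius $r_{\cA}$, the elementary inequality $\pa\tilde c\pa^{\sim}\le r_{\cA}L_{\cA}(c)$ holds for self-adjoint $c\in\cA$ --- it follows from $\mathrm{diam}(S(\cA),d_{L_{\cA}})=2\sup\{\pa\tilde c\pa^{\sim}\mid L_{\cA}(c)\le1\}$ and $\mathrm{rad}\ge\tfrac{1}{2}\mathrm{diam}$ --- so $\pa\tilde{a}_{00}\pa^{\sim}\le r_{\cA}L(a)$. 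Adding the three bounds via $\tilde a=\widetilde{(a-a_0)}+\widetilde{(a_0-a_{00})}+\tilde{a}_{00}$ and $\pa\tilde x\pa^{\sim}\le\pa x\pa$ yields the lemma. All of this is bookkeeping; the only points needing any care are checking that the averaging kernels are genuine finite combinations of characters with the asserted $L^1$-bounds (this is exactly where Lemma~\ref{character:lemma} and $|\chi_s|\le\dim s$ enter) and recording the radius-versus-quotient-norm inequality for $(\cA,L_{\cA})$, so I do not anticipate a real obstacle here --- the substantive content of Theorem~\ref{action:thm} sits in the other lemmas invoked in its proof.
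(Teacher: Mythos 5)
Your proof is correct and follows essentially the same route as the paper's: the same decomposition $a=(a-\sigma_{n\varphi}(a))+\sum_{s_0\neq s}(a_0)_s+(a_0)_{s_0}$ with $\varphi$ built from the characteristic function of $H_0$ via Lemma~\ref{character:lemma}, the same use of conditions (1), (2), (5) on the respective pieces, and the same appeal to the radius of $(\cA,L_{\cA})$ for the fixed-point component. The only cosmetic difference is that you justify $\pa\tilde c\pa^{\sim}\le r_{\cA}L_{\cA}(c)$ by a diameter argument where the paper cites Rieffel directly.
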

\begin{proof}
By Lemma~\ref{character:lemma} the characteristic function $\varphi$ of $H_0$ on $H$ is a linear combination
of characters of $H$.
Set $n=|H/H_0|$. Let $a\in A_{\sa}$. Then $\sigma_{n\varphi}(a)$ belongs to $A_{\sa}$ and is fixed by $\sigma|_{H_0}$.
We have
\begin{eqnarray*}
\pa a-\sigma_{n\varphi}(a)\pa
&=&\pa \int_{H_0}a\, dh-\int_{H_0}\sigma_h(a)\, dh\pa
\le \int_{H_0}\pa a-\sigma_h(a)\pa\, dh \\
&\le &L^{\ell}(a)\int_{H_0}\ell(h)\, dh
\le C\cdot L(a)\int_{H_0}\ell(h)\, dh,
\end{eqnarray*}
where the last inequality comes from the condition (1).
By the condition (2) we have
\begin{eqnarray*}
L(\sigma_{n\varphi}(a))\le \pa n\varphi\pa_1\cdot L(a)= L(a).
\end{eqnarray*}
Note that $A^{\sigma|_{H_0}}=\oplus_{s\in \widehat{H/H_0}}A_s$.
Say, $\sigma_{n\varphi}(a)=\sum_{s\in \widehat{H/H_0}}a_s$ with $a_s\in A_s$.
For each $s\in \widehat{H/H_0}$, denote by $\chi_s$ the corresponding character
of $H/H_0$, thought of as a character of $H$.
Then $a_s=\sigma_{\dim(s)\overline{\chi_s}}(\sigma_{n\varphi}(a))$ \cite[Lemma 3.2]{Li05}.
Thus
$$ L(a_s)=L(\sigma_{\dim(s)\overline{\chi_s}}(\sigma_{n\varphi}(a))\le \pa \dim(s)\overline{\chi_s}\pa_1L(\sigma_{n\varphi}(a))\le (\dim(s))^2L(a),$$
where the first inequality comes from the condition (2).
Note that $a_{s_0}\in A_{\sa}$. By the condition (5) we have
$$ \pa a_s\pa \le C_sL(a_s)\le C_s (\dim(s))^2L(a)$$
for each $s\in \widehat{H/H_0}$ not equal to $s_0$.
By the condition (4), we have
$$\pa b\pa^{\sim}\le r_{\cA} L^{\sim}(b)$$ for all $b\in (A_{s_0})_{\sa}=(A^{\sigma})_{\sa}$ \cite[Proposition 1.6, Theorem 1.9]{Rieffel98}
\cite[Proposition 2.11]{Li05}.  Thus
$$ \pa a_{s_0}\pa^{\sim}\le r_{\cA}L^{\sim}(a_{s_0})=r_{\cA}L(a_{s_0})\le r_{\cA}L(a).$$
Therefore we have
\begin{eqnarray*}
 \pa a\pa^{\sim}
&\le &\pa a-\sigma_{n\varphi}(a)\pa+\pa a_{s_0}\pa^{\sim}+ \sum_{s_0\neq s\in \widehat{H/H_0}}\pa a_s\pa \\
&\le & C\cdot L(a)\int_{H_0}\ell(h)\, dh+r_{\cA}L(a)+\sum_{s_0\neq s\in \widehat{H/H_0}}C_s(\dim(s))^2L(a)
\end{eqnarray*}
as desired.
\end{proof}
This finishes the proof of Theorem~\ref{action:thm}.
\end{proof}

\section{Proof of Theorem~\ref{Lip:thm}}  \label{Lip:sec}

In this section we prove Theorem~\ref{Lip:thm}.

Denote by $K_0$ the connected component of $K$ containing the identity element $e_K$.
 Take an inner product
on $\fk$ and use it to get a translation invariant
Riemannian metric on $K$ in the usual way. For each $x\in K_0$ set
$\ell(x)$ to be the geodesic distance form $e_K$ to $x$. Then
$\ell$ is a length function on $K_0$.

In order to prove Theorem~\ref{Lip:thm}, we just need to  verify the conditions
in Theorem~\ref{action:thm} for $(A, L, H, H_0, \sigma)=(C^*(\hat{G}/\Gamma, \rho), L_{\rho}, K, K_0, \beta)$.
Recall that we are given a norm on $\fg$, and
\begin{equation} \label{L:eqn}
L_{\rho}(a)=
\begin{cases}
\sup_{0\neq X\in \fg}\frac{\pa \alpha_X(a)\pa}{\pa X\pa}, &\text{if $a\in C^1(\hat{G}/\Gamma, \rho)$;}\\
\infty, &\text{otherwise,}
\end{cases}
\end{equation}
for $a\in C^*(\hat{G}/\Gamma, \rho)$.

By Lemma~\ref{cont:lemma} the actions $\alpha$ and $\beta$ on $C^*(\hat{G}/\Gamma, \rho)$ commute with each other.
Thus $\beta$ preserves $C^1(\hat{G}/\Gamma, \rho)$ and
$L_{\rho}$.


Choose the basis $X_1, \dots, X_{\dim(G)}$ of $\fg$ in
Proposition~\ref{der:prop} to be of norm $1$.
Denote by $C_1$
the supremum of $||F_{j, Y}||$ for all $1\le j\le \dim(G)$ and
$Y$ in the unit sphere of $\fk$ (with respect to
the inner product on $\fk$ above) in Proposition~\ref{der:prop}.

\begin{lemma} \label{C:lemma}
We have $L^{\ell}\le (dim(G)C_1)\cdot L_{\rho}$ on $C^*(\hat{G}/\Gamma, \rho)$.
\end{lemma}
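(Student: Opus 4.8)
The plan is to deduce the inequality by rewriting the $\beta$-derivatives of $a$ in terms of its $\alpha$-derivatives via Proposition~\ref{der:prop}. If $a\notin C^1(\hat{G}/\Gamma,\rho)$ then $L_\rho(a)=\infty$ and there is nothing to prove, so assume $a\in C^1(\hat{G}/\Gamma,\rho)$. By Proposition~\ref{der:prop}, $a$ is once differentiable with respect to $\beta$ and $\beta_Y(a)=-\sum_j F_{j,Y}*\alpha_{X_j}(a)$ for all $Y\in\fk$. Since $F_{j,Y}\in B_0$ and, by Lemma~\ref{cont:lemma}, the norm of $B_0$ inside $C^*(\hat{G}/\Gamma,\rho)$ is the supremum norm, submultiplicativity of the $C^*$-norm gives $\pa F_{j,Y}*\alpha_{X_j}(a)\pa\le\pa F_{j,Y}\pa\,\pa\alpha_{X_j}(a)\pa$. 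For $Y$ in the unit sphere of $\fk$ we have $\pa F_{j,Y}\pa\le C_1$ by the definition of $C_1$, while $\pa\alpha_{X_j}(a)\pa\le L_\rho(a)\pa X_j\pa=L_\rho(a)$ because the $X_j$ were chosen of norm $1$. Summing over $j=1,\dots,\dim(G)$ yields $\pa\beta_Y(a)\pa\le\dim(G)C_1\,L_\rho(a)$ whenever $\pa Y\pa=1$, and hence $\pa\beta_Y(a)\pa\le\dim(G)C_1\,L_\rho(a)\,\pa Y\pa$ for all $Y\in\fk$, since $Y\mapsto\beta_Y(a)$ is linear.

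Next I would pass from this infinitesimal estimate to the finite one appearing in $L^{\ell}$. Given $h\in K_0$ with $h\neq e_K$, since $K_0$ is a compact connected abelian Lie group there is $Y_h\in\fk$ with $e^{Y_h}=h$; and because the Riemannian metric on $K$ is the translation-invariant one attached to the chosen inner product on $\fk$, the one-parameter subgroup $z\mapsto e^{zY}$ has length $\pa Y\pa$, so $\ell(h)\le\pa Y_h\pa$, and choosing $Y_h$ of minimal norm among the logarithms of $h$ we get $\ell(h)=\pa Y_h\pa$. As $a$ is once differentiable with respect to $\beta$, one has $\frac{d}{dz}\beta_{e^{zY_h}}(a)=\beta_{e^{zY_h}}(\beta_{Y_h}(a))$, so, exactly as in the proof of Lemma~\ref{derivation via length:lemma},
\[\beta_h(a)-a=\int_0^1\beta_{e^{zY_h}}(\beta_{Y_h}(a))\,dz,\]
and since each $\beta_{e^{zY_h}}$ is isometric, $\pa\beta_h(a)-a\pa\le\pa\beta_{Y_h}(a)\pa\le\dim(G)C_1\,L_\rho(a)\,\pa Y_h\pa=\dim(G)C_1\,L_\rho(a)\,\ell(h)$. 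Dividing by $\ell(h)$ and taking the supremum over $h\in K_0\setminus\{e_K\}$ gives $L^{\ell}(a)\le\dim(G)C_1\,L_\rho(a)$, by formula (\ref{def of L:eq}) applied to $(\sigma,H_0)=(\beta,K_0)$.

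The argument is essentially bookkeeping; the only point needing a little care is the identification $\ell(h)=\pa Y_h\pa$ for an appropriate logarithm $Y_h$ of $h$, which is the standard description of minimizing geodesics through the identity of a compact abelian Lie group as one-parameter subgroups. Everything else — the passage through Proposition~\ref{der:prop}, the contractivity of left multiplication by elements of $B_0$ inside $C^*(\hat{G}/\Gamma,\rho)$, and the integral formula for $\beta_h(a)-a$ — is routine.
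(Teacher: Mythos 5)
Your proof is correct and follows essentially the same route as the paper: reduce to $a\in C^1(\hat{G}/\Gamma,\rho)$, use Proposition~\ref{der:prop} to write $\beta_Y(a)=-\sum_jF_{j,Y}*\alpha_{X_j}(a)$, and bound $\pa\beta_Y(a)\pa\le\dim(G)C_1L_\rho(a)$ for unit $Y$. The only difference is that where the paper simply cites \cite[Proposition 8.6]{Rieffel00} for the identity $L^{\ell}(a)=\sup_{\pa Y\pa=1}\pa\beta_Y(a)\pa$, you rederive the needed inequality directly via minimizing one-parameter subgroups and the integral formula, in the spirit of Lemma~\ref{derivation via length:lemma}; this is a harmless (and correct) expansion of a cited step.
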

\begin{proof}
It suffices to show $L^{\ell}\le (\dim(G)C_1)\cdot L_{\rho}$ on $C^1(\hat{G}/\Gamma, \rho)$.
By Proposition~\ref{der:prop} every $a\in C^1(\hat{G}/\Gamma, \rho)$ is
once differentiable with respect to the action $\beta$.
By \cite[Proposition 8.6]{Rieffel00} we have $L^{\ell}(a)=\sup_{Y\in \fk, ||Y||=1}||\beta_{Y}(a)||$.
Then from (\ref{der:eq}) in Proposition~\ref{der:prop} we get
$L^{\ell}(a)\le (\dim(G)C_1)L_{\rho}(a)$.
\end{proof}

\begin{lemma} \label{character1:lemma}
For any linear combination $\varphi$ of finitely many characters
    of $K$ we have $L_{\rho}\circ \beta_{\varphi}\le || \varphi ||_1\cdot
    L_{\rho}$ on $C^*(\hat{G}/\Gamma, \rho)$.
\end{lemma}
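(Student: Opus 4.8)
The plan is to exploit that $\beta_\varphi$ is an average of the isometric automorphisms $\beta_k$, which commute with the derivations $\alpha_X$ defining $L_\rho$. If $a\notin C^1(\hat{G}/\Gamma,\rho)$ then $L_\rho(a)=\infty$ and the asserted inequality is vacuous, so assume $a\in C^1(\hat{G}/\Gamma,\rho)$. The first real step is to check that $\beta_\varphi(a)\in C^1(\hat{G}/\Gamma,\rho)$. By Lemma~\ref{C1:lemma} (applied with $\sigma=\alpha$ and $\sigma'=\beta$), $C^1(\hat{G}/\Gamma,\rho)$ is a Banach space under the norm $\fp=L_\rho+\|\cdot\|$, the action $\beta$ preserves $C^1(\hat{G}/\Gamma,\rho)$ and $\fp$, and $\beta$ restricted to $C^1(\hat{G}/\Gamma,\rho)$ is strongly continuous with respect to $\fp$. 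Hence $k\mapsto\varphi(k)\beta_k(a)$ is a continuous $C^1(\hat{G}/\Gamma,\rho)$-valued function on the compact group $K$, so $\int_K\varphi(k)\beta_k(a)\,dk$ converges in $(C^1(\hat{G}/\Gamma,\rho),\fp)$; since the inclusion $C^1(\hat{G}/\Gamma,\rho)\hookrightarrow C^*(\hat{G}/\Gamma,\rho)$ is $\fp$-to-norm continuous, this integral equals $\beta_\varphi(a)$, and in particular $\beta_\varphi(a)\in C^1(\hat{G}/\Gamma,\rho)$.

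For the estimate, fix $X\in\fg$. The map $\alpha_X\colon C^1(\hat{G}/\Gamma,\rho)\to C^*(\hat{G}/\Gamma,\rho)$ is bounded with respect to $\fp$, since $\|\alpha_X(b)\|\le\|X\|\,L_\rho(b)$ by the definition of $L_\rho$; hence $\alpha_X$ may be pulled through the integral:
\[
\alpha_X(\beta_\varphi(a))=\int_K\varphi(k)\,\alpha_X(\beta_k(a))\,dk=\int_K\varphi(k)\,\beta_k(\alpha_X(a))\,dk,
\]
where the second equality uses that $\alpha$ and $\beta$ commute (Lemma~\ref{cont:lemma}) together with the fact that $\beta$ preserves $C^1(\hat{G}/\Gamma,\rho)$. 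Since each $\beta_k$ is isometric,
\[
\|\alpha_X(\beta_\varphi(a))\|\le\int_K|\varphi(k)|\,\|\beta_k(\alpha_X(a))\|\,dk=\|\varphi\|_1\,\|\alpha_X(a)\|.
\]
Dividing by $\|X\|$ for $X\neq0$ and taking the supremum over $X$ yields $L_\rho(\beta_\varphi(a))\le\|\varphi\|_1\,L_\rho(a)$.

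I do not anticipate a serious obstacle here; the one point I would present most carefully is the passage between the $C^*$-valued integral defining $\beta_\varphi(a)$ and the $\fp$-valued integral in $C^1(\hat{G}/\Gamma,\rho)$, together with the fact that $\alpha_X$ commutes with this integral, both of which reduce to Lemma~\ref{C1:lemma} and the commutation of $\alpha$ and $\beta$. It is worth noting that the hypothesis that $\varphi$ be a linear combination of finitely many characters of $K$ is not actually used anywhere in the argument, which works verbatim for an arbitrary $\varphi\in C(K)$; this restricted form is simply all that condition~(2) of Theorem~\ref{action:thm} requires.
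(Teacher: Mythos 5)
Your proof is correct, and it takes a somewhat different route from the paper's at the decisive step. Both arguments share the same setup: $\beta$ preserves $L_{\rho}$, and Lemma~\ref{C1:lemma} guarantees that the integral $\int_K\varphi(k)\beta_k(a)\,dk$ converges in $(C^1(\hat{G}/\Gamma,\rho),\fp)$, so that $\beta_{\varphi}(a)$ again lies in $C^1(\hat{G}/\Gamma,\rho)$. Where you diverge is in how the bound $L_{\rho}(\beta_{\varphi}(a))\le\pa\varphi\pa_1 L_{\rho}(a)$ is extracted: you pull the bounded operator $\alpha_X\colon (C^1,\fp)\to C^*(\hat{G}/\Gamma,\rho)$ through the vector-valued integral, commute it past $\beta_k$, and estimate the resulting integral termwise using that each $\beta_k$ is isometric. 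The paper instead first observes, via Lemma~\ref{derivation via length:lemma}, that $L_{\rho}$ is lower semi-continuous on $C^1(\hat{G}/\Gamma,\rho)$ in the relative $C^*$-norm topology, and then invokes \cite[Remark 4.2.(3)]{Li05}, which packages the standard Riemann-sum argument: approximate the integral by finite sums $\sum_i\varphi(k_i)\mu(E_i)\beta_{k_i}(a)$, each of which has $L_{\rho}$ at most $\sum_i|\varphi(k_i)|\mu(E_i)L_{\rho}(a)$, and pass to the limit using lower semi-continuity. Your version is more self-contained (no external citation) and makes the mechanism completely explicit; the paper's version is shorter and reuses machinery that is needed elsewhere anyway (the lower semi-continuity of $L_{\rho}$ reappears in Section~\ref{GH:sec} and the Appendix). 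Your closing observation that the character hypothesis is not used is consistent with the paper, whose proof likewise notes that $\beta_{\psi}$ is well-defined for an arbitrary continuous $\psi$ on $K$; the restricted statement is simply what condition~(2) of Theorem~\ref{action:thm} calls for.
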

\begin{proof}
We have remarked above that $\beta$ preserves $L_{\rho}$.
By Lemma~\ref{derivation via length:lemma} one has
\begin{eqnarray} \label{L-length:eq}
L_{\rho}(a)=\sup_{0\neq X\in \fg}\frac{\pa \alpha_{e^{X}}(a)-a\pa}{\pa X\pa}
\end{eqnarray}
for every $a\in C^1(\hat{G}/\Gamma, \rho)$.
It follows that $L_{\rho}$ is lower semi-continuous on $C^1(\hat{G}/\Gamma, \rho)$
equipped with the relative topology from $C^1(\hat{G}/\Gamma, \rho)\subseteq C^*(\hat{G}/\Gamma, \rho)$.
By Lemma~\ref{C1:lemma} the action $\beta$ is also
strongly continuous on $C^1(\hat{G}/\Gamma, \rho)$ with respect to
the norm defined in Lemma~\ref{C1:lemma}.
Then $\beta_{\psi}$ is also well-defined on $C^1(\hat{G}/\Gamma, \rho)$
for any continuous $\Cb$-valued function $\psi$ on
$K$. By \cite[Remark 4.2.(3)]{Li05}
we get Lemma~\ref{character1:lemma}.
\end{proof}

The conditions (1) and (2) in Theorem~\ref{action:thm} for
$(A, L, H, H_0, \sigma)=(C^*(\hat{G}/\Gamma, \rho), L_{\rho}, K, K_0, \beta)$
follow from Lemmas~\ref{C:lemma} and \ref{character1:lemma} respectively.

Fix an inner product on $\fg$, and denote by $L'_{\rho}$ the seminorm on
$C^*(\hat{G}/\Gamma, \rho)$ defined by (\ref{L:eqn}) but using this inner product norm instead.
Since $\fg$ is finite dimensional, any two norms
on $\fg$ are equivalent. Therefore there exists some constant $C_2>0$ not depending on
$\rho$ such that $L'_{\rho}\le C_2L_{\rho}$.

By Lemma~\ref{quotient metric:lemma} and Proposition~\ref{Lipschitz:prop}
the restriction of $L'_{\rho_0}$ on $C^1(\hat{G}/\Gamma, \rho_0)\subseteq C(G/\Gamma)$ is the
Lipschitz seminorm associated to some metric $d$ on $G/\Gamma$.
The Arzela-Ascoli theorem \cite[Theorem VI.3.8]{Conway} tells us that
the set $\{a\in C^*(\hat{G}/\Gamma, \rho_0)| L_{\rho_0}(a)\le r_1, \pa a\pa \le r_2\}$ is
totally bounded for any $r_1, r_2>0$. Since for each $s\in \hat{K}$ neither the seminorm $L_{\rho}$ nor the $C^*$-norm
on $B_s\subseteq C^*(\hat{G}/\Gamma, \rho)$  depends on $\rho$, the condition (3) in Theorem~\ref{action:thm} for
$(A, L, H, H_0, \sigma)=(C^*(\hat{G}/\Gamma, \rho), L_{\rho}, K, K_0, \beta)$
follows.

From the criterion of Lip-norms in \cite[Proposition 1.6, Theorem 1.9]{Rieffel98} (see also
\cite[Proposition 2.11]{Li05}) one sees that the Lipschitz seminorm associated to
the metric on any compact metric space is a Lip-norm on the $C^*$-algebra of continuous
functions on this space. Since $L'_{\rho_0}$ on $C^*(\hat{G}/\Gamma, \rho_0)=C(G/\Gamma)$ is no less than
the Lipschitz seminorm associated to the metric $d$ on $G/\Gamma$, from \cite[Proposition 1.6, Theorem 1.9]{Rieffel98}
one concludes that $L_{\rho_0}$ is also a Lip-norm on $C(G/\Gamma)$. Therefore
we may take $(\cA, L_{\cA})$ in condition (4) of Theorem~\ref{action:thm} to
be $(C(G/\Gamma), L_{\rho_0})$ for $(A, L, H, H_0, \sigma)=(C^*(\hat{G}/\Gamma, \rho), L_{\rho}, K, K_0, \beta)$.

Let $s\in \hat{K}$ not being the trivial representation of $K$, and let
$a\in B_s$.
Then $L'_{\rho_0}(a)\le C_2L_{\rho_0}(a)=C_2L_{\rho}(a)$.
Thus for any $\lambda$ in the range of $a$ on $G/\Gamma$ one has
$\pa a-\lambda 1_{C(G/\Gamma)}\pa_{C(G/\Gamma)}\le C_2C_3L_{\rho}(a)$,
where $C_3$ denotes the diameter of $G/\Gamma$ under the metric $d$.  We have
\begin{eqnarray*}
\pa a\pa_{C^*(\hat{G}/\Gamma, \rho)}&=&\pa a\pa_{C(G/\Gamma)}
=\pa \int_K\overline{\left<k, s\right>}\beta_k(a-\lambda 1_{C(G/\Gamma)})\, dk\pa_{C(G/\Gamma)}\\
&\le &\pa a-\lambda 1_{C(G/\Gamma)}\pa_{C(G/\Gamma)}\le C_2C_3L_{\rho}(a).
\end{eqnarray*}
This establishes the condition (5) of Theorem~\ref{action:thm} for
$(A, L, H, H_0, \sigma)=(C^*(\hat{G}/\Gamma, \rho), L_{\rho}, K, K_0, \beta)$.

We have shown that the conditions in Theorem~\ref{action:thm} hold for $(A, L, H, H_0, \sigma)=(C^*(\hat{G}/\Gamma, \rho), L_{\rho}, K, K_0, \beta)$.
Thus Theorem~\ref{Lip:thm} follows from Theorem~\ref{action:thm}.
\section{Quantum Gromov-Hausdorff distance} \label{GH:sec}

In this section we prove Theorem~\ref{continuity:thm}.

We recall first the definition of the distance $\distnu$ from \cite[Section 5]{KL}. To simplify the notation,
for fixed unital $C^*$-algebras $A_1$ and $A_2$, when we take infimum over unital $C^*$-algebras $B$ containing
both $A_1$ and $A_2$, we mean to take infimum over all unital isometric $*$-homomorphisms of $A_1$ and $A_2$
into some unital $C^*$-algebra $B$. Denote by $\dist^B_{\rH}$ the Hausdorff distance between subsets of $B$.
For a $C^*$-algebraic compact quantum metric spaces $(A, L_{A})$, set
$$ \cE(A)=\{a\in A_{\sa}| L_A(a)\le 1\}.$$
 For any $C^*$-algebraic compact quantum metric spaces $(A_1, L_{A_1})$ and $(A_2, L_{A_2})$, the distance
$\distnu(A_1, A_2)$ is defined as
$$ \distnu(A_1, A_2)=\inf \dist^B_{\rH}(\cE(A_1), \cE(A_2)),$$
where the infimum is taken over all unital $C^*$-algebras $B$ containing $A_1$ and $A_2$.

Throughout the rest of this section, we fix $G$, $\Gamma$, $K$ such that there exits
$\rho$ satisfying the conditions (S1)-(S5). We also fix a norm on $\fg$.
Denote by $\Omega$ the set of all $\rho$ satisfying the conditions (S1) and (S2),
equipped with the  weakest topology  making the maps $\Omega\rightarrow G$ sending $\rho$ to $\rho(s)$
to be continuous for each $s\in \hat{K}$.

Every closed subgroup of a Lie group is also a Lie group \cite[Theorem 3.42]{Warner}.
Thus $K$ is a compact abelian Lie group. Then $K$ is the product of a torus and a finite
abelian group \cite[Corollary 3.7]{BD}. Therefore $\hat{K}$ is finitely generated.
Let $s_1, \dots, s_n$ be a finite subset of $\hat{K}$ generating $\hat{K}$. Then
the map $\varphi: \Omega\rightarrow \prod_{j=1}^nG$ sending $\rho$ to $(\rho(s_1), \dots, \rho(s_n))$
is injective, and its image is closed. Furthermore, it is easily checked that
the topology on $\Omega$ is exactly the pullback of the relative topology
of $\varphi(\Omega)$ in $\prod_{j=1}^nG$.
Since $G$ is a Lie group, it is locally compact metrizable. Thus $\prod^n_{j=1}G$ and $\Omega$ are
also locally compact metrizable.

For clarity and convenience, we shall denote the actions $\alpha$ and $\beta$ on $C^*(\hat{G}/\Gamma, \rho)$ by $\alpha_{\rho}$
and $\beta_{\rho}$ respectively, and denote the $C^*$-norm on $C^*(\hat{G}/\Gamma, \rho)$ by $\pa \cdot \pa_{\rho}$.
Consider the (possibly $+\infty$-valued) auxiliary seminorm $L''_{\rho}$ on
$C^*(\hat{G}/\Gamma, \rho)$ defined by
$$ L''_{\rho}(a)=\sup_{0\neq X\in \fg}\frac{\pa \alpha_{\rho, e^X}(a)-a\pa_{\rho}}{\pa X\pa}.$$

\begin{lemma} \label{lower semicontinuity:lemma}
Let $W$ be a locally compact Hausdorff space with a continuous map $W\rightarrow \Omega$ sending
$w$ to $\rho_w$.
Let $f$ be a continuous section of the continuous field of $C^*$-algebras over $W$ in Proposition~\ref{cont field:prop}.
Then the function $w\mapsto L''_{\rho_w}(f_w)$
is lower semi-continuous on $W$.
\end{lemma}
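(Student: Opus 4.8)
The plan is as follows. Fix the continuous section $f$ and a nonzero vector $X\in\fg$, and write $y=e^X\in G$. I would first show that the single function $w\mapsto\pa\alpha_{\rho_w,y}(f_w)-f_w\pa_{\rho_w}$ is continuous on $W$. Granting this for every $X$, the defining formula presents $w\mapsto L''_{\rho_w}(f_w)$ as a pointwise supremum over $0\neq X\in\fg$ of such continuous functions, each rescaled by the positive constant $1/\pa X\pa$; and a pointwise supremum of continuous functions is always lower semi-continuous, since each superlevel set $\{w:L''_{\rho_w}(f_w)>c\}$ is a union of open sets. So the whole statement reduces to the continuity claim for a fixed $y$.

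The key point I would exploit is that the translation automorphisms act in a $\rho$-independent way on the dense core. As a Banach space each $B_s$, and hence $\oplus_{s\in\hat K}B_s$, is independent of $\rho$; on this core the automorphism $\alpha_{\rho,y}$ is simply given by $(\alpha_{\rho,y}g)(x)=g(y^{-1}x)$, which does not involve $\rho$; and since $\alpha$ and $\beta$ commute (Lemma~\ref{cont:lemma}), $\alpha_{\rho,y}$ carries each spectral subspace $B_s$ into itself, so it maps $\oplus_{s\in\hat K}B_s$ into itself by this $\rho$-independent formula, which I denote $g\mapsto\alpha_y(g)$. Hence, by Proposition~\ref{cont field:prop}, for each $g\in\oplus_{s\in\hat K}B_s$ the section $w\mapsto\alpha_y(g)-g$ is a continuous section of the continuous field of $C^*$-algebras, so $w\mapsto\pa\alpha_y(g)-g\pa_{\rho_w}$ is a continuous real-valued function of $w$. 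The remaining step is a routine $\varepsilon/3$ approximation: given $w_0$ and $\varepsilon>0$, choose $g\in\oplus_{s\in\hat K}B_s$ with $\pa f_{w_0}-g\pa_{\rho_{w_0}}$ small (possible by density); since $w\mapsto f_w-g$ is a continuous section, $\pa f_w-g\pa_{\rho_w}$ stays small near $w_0$; and since $\alpha_{\rho_w,y}$ is isometric with $\alpha_{\rho_w,y}(g)=\alpha_y(g)$, two applications of the triangle inequality give $\big|\,\pa\alpha_{\rho_w,y}(f_w)-f_w\pa_{\rho_w}-\pa\alpha_y(g)-g\pa_{\rho_w}\,\big|\le 2\pa f_w-g\pa_{\rho_w}$. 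Combining this with the continuity of $w\mapsto\pa\alpha_y(g)-g\pa_{\rho_w}$ from the previous step yields continuity of $w\mapsto\pa\alpha_{\rho_w,y}(f_w)-f_w\pa_{\rho_w}$ at $w_0$.

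The one genuinely delicate ingredient is the claim isolated in the second paragraph: that the deformed translation action, after being transported through the enveloping-$C^*$-algebra construction, restricts on the dense core $\oplus_{s\in\hat K}B_s$ to the manifestly $\rho$-independent formula $(\alpha_{\rho,y}g)(x)=g(y^{-1}x)$ and preserves that core. Everything else is soft: local uniform approximability of continuous sections, isometry of $\alpha$, and the fact (valid because $\hat K$ is amenable) that the fiber norms of a continuous section vary continuously. Let me also note that we work here with the everywhere-defined auxiliary seminorm $L''_{\rho_w}$ rather than with $L_{\rho_w}$ itself, precisely so that the supremum formula above is available for all sections with no differentiability hypothesis.
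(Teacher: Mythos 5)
Your proof is correct and follows essentially the same route as the paper: both rest on the observation that for each fixed $X$ the map $w\mapsto \pa \alpha_{\rho_w, e^X}(f_w)-f_w\pa_{\rho_w}$ is continuous (because $w\mapsto \alpha_{\rho_w, e^X}(f_w)$ is a continuous section of the field), after which lower semi-continuity of the supremum is immediate. The only difference is cosmetic: the paper asserts the continuity of that section as ``easily checked'' and runs the $\varepsilon$-argument for a near-optimal $X$ directly, whereas you spell out the density/approximation argument on the core $\oplus_{s\in\hat K}B_s$ and then invoke the general fact that a supremum of continuous functions is lower semi-continuous.
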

\begin{proof}
Let $w'\in W$. To show that the above function is lower semi-continuous at $w'$, we consider the case
$L''_{\rho_{w'}}(f_{w'})<\infty$.
The case $L''_{\rho_{w'}}(f_{w'})=\infty$
can be dealt with similarly.
Let $\varepsilon>0$. Take $0\neq X\in \fg$ such
that
$$L''_{\rho_{w'}}(f_{w'})\pa X\pa < \pa \alpha_{\rho_{w'}, e^X}(f_{w'})-f_{w'}\pa_{\rho_{w'}}+\varepsilon\pa X\pa.$$
It is easily checked that $w\mapsto \alpha_{\rho_w, e^X}(f_{w})$ is also a continuous section of the continuous field.
Then when $w$ is close enough to $w'$, we have
$$\pa \alpha_{\rho_{w'}, e^X}(f_{w'})-f_{w'}\pa_{\rho_{w'}}< \pa \alpha_{\rho_w, e^X}(f_{w})-f_{w}\pa_{\rho_{w}}+\varepsilon\pa X\pa$$
and hence
$$L''_{\rho_{w'}}(f_{w'})\pa X\pa < \pa \alpha_{\rho_w, e^X}(f_{w})-f_{w}\pa_{\rho_w}+2\varepsilon\pa X\pa
\le  (L''_{\rho_w}(f_{w})+2\varepsilon)\pa X\pa.$$
Therefore $L''_{\rho_{w'}}(f_{w'})\le L''_{\rho_w}(f_{w})+2\varepsilon$.
\end{proof}

Note that although the $*$-algebra structure of $C_{0, 1}(G/\Gamma, \rho)$ ($C_{\rb, 1}(G, \rho)$ resp.) depends on $\rho$, the Banach space structure, the left translation action of $G$ and the right translation action of $K$ on $C_{0, 1}(G/\Gamma, \rho)$ ($C_{\rb, 1}(G, \rho)$ resp.) do not depend
on $\rho$. Thus we may denote by $C_{0, 1}(G/\Gamma)$, $\alpha$ and $\beta$ this Banach space and these actions respectively.
Also denote by $C^1_{0, 1}(G/\Gamma)$ the set of once differentiable elements of $C_{0, 1}(G/\Gamma)$ with respect to $\alpha$.

\begin{lemma} \label{cont L:lemma}
For any $a$ in  $\oplus_{s\in \hat{K}}(B_s\cap C^1_{0, 1}(G/\Gamma))$, the function $\rho \mapsto L_{\rho}(a)$ is continuous on $\Omega$.
\end{lemma}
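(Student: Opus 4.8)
The plan is to reduce everything to a finite‑dimensional computation by exploiting the explicit formula (\ref{der:eq}) and the product formula (\ref{b, 1:eq}). First I would fix $a\in B_s\cap C^1_{0,1}(G/\Gamma)$ for a single $s\in\hat K$ (the general case follows by finite additivity of the splitting $\oplus_s B_s$, since $\alpha$ preserves each $B_s$ by Lemma~\ref{cont:lemma}). By Lemma~\ref{derivation via length:lemma},
\begin{eqnarray*}
L_\rho(a)=\sup_{0\neq X\in\fg}\frac{\pa\alpha_{\rho,e^X}(a)-a\pa_\rho}{\pa X\pa}=L''_\rho(a),
\end{eqnarray*}
so it suffices to show $\rho\mapsto L''_\rho(a)$ is continuous on $\Omega$. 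Applying Lemma~\ref{cont field:prop} to $W=\Omega$ with the tautological section $w\mapsto a$ (note $a$ lies in $\oplus_s B_s$, which is the defining family of continuous sections), Lemma~\ref{lower semicontinuity:lemma} already gives lower semicontinuity of $\rho\mapsto L''_\rho(a)$. So the real task is \emph{upper} semicontinuity.

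For upper semicontinuity I would bound $L''_\rho(a)$ above by something manifestly continuous in $\rho$. The natural candidate is $L_\rho(a)$ itself viewed through the derivation formula: as in Lemma~\ref{C:lemma}, every element of $C^1(\hat G/\Gamma,\rho)$ is once differentiable for $\beta$, and the finite‑dimensional data $\{F_{j,Y}\}$ of Proposition~\ref{der:prop} do not depend on $\rho$. More directly, since $a\in B_s$ and the norm on $B_s\subseteq C^*(\hat G/\Gamma,\rho)$ is the supremum norm independently of $\rho$ (Lemma~\ref{cont:lemma}), we have $\pa\alpha_{\rho,e^X}(a)-a\pa_\rho=\pa\alpha_{e^X}(a)-a\pa_\infty$, the sup norm of a function on $G/\Gamma$, which does \emph{not} depend on $\rho$ at all. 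Hence for $a\in B_s$ the quantity $L''_\rho(a)$ is literally constant in $\rho$, and continuity is immediate; the same holds for any $a\in\oplus_{s\in\hat K}(B_s\cap C^1_{0,1}(G/\Gamma))$ because such an $a$ is a \emph{finite} sum $a=\sum_{s\in F}a_s$ with $a_s\in B_s$, and then $\alpha_{\rho,e^X}(a)-a=\sum_{s\in F}(\alpha_{e^X}(a_s)-a_s)$ is a fixed element of the fixed Banach space $C_{0,1}(G/\Gamma)$ whose $C^*(\hat G/\Gamma,\rho)$‑norm we must still control.

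The main obstacle, then, is exactly that last point: for a sum over several spectral components the relevant norm $\pa\,\cdot\,\pa_\rho$ genuinely depends on $\rho$ (the multiplication (\ref{b, 1:eq}), and hence the $C^*$-norm, involves $\rho$). I would handle this by combining the two halves: lower semicontinuity from Lemma~\ref{lower semicontinuity:lemma} as above, and upper semicontinuity from the upper‑semicontinuity clause of Lemma~\ref{cont of Fell:lemma} (equivalently Lemma~\ref{cont field:prop}) applied to the section $w\mapsto\alpha_{\rho_w,e^X}(a)-a$, which is continuous in $w$ by the same argument as in the proof of Lemma~\ref{lower semicontinuity:lemma}. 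Concretely, given $\rho'\in\Omega$ and $\varepsilon>0$, one picks $X$ nearly attaining the sup for $L''_{\rho'}(a)$; near $\rho'$ the norm $\pa\alpha_{\rho,e^X}(a)-a\pa_\rho$ is close to $\pa\alpha_{\rho',e^X}(a)-a\pa_{\rho'}$ (continuity of that one section), giving $L''_\rho(a)\le L''_{\rho'}(a)+\varepsilon$ approximately — but to get it in the correct direction one instead uses that $\pa\cdot\pa_\rho$ is \emph{upper} semicontinuous in $\rho$ for each fixed section while $L''_{\rho}(a)$ is a sup of such, and a standard interchange‑of‑sup argument (as in the proof of Lemma~\ref{lower semicontinuity:lemma}, run with the roles reversed using the upper‑semicontinuity of section norms) closes the gap. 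Since we already have lower semicontinuity, the two together yield continuity of $\rho\mapsto L''_\rho(a)=L_\rho(a)$, as desired.
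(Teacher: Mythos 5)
Your reduction to a single spectral component is correct but trivial (for $a\in B_s$ both the norm and $L_\rho$ are literally independent of $\rho$, as the paper notes elsewhere), and you correctly identify that the whole content of the lemma is the case of a finite sum over several components. The problem is that your argument for that case has a genuine gap at the upper semicontinuity step. You write $L_\rho(a)=L''_\rho(a)=\sup_{0\neq X\in\fg}\pa\alpha_{\rho,e^X}(a)-a\pa_\rho/\pa X\pa$ and propose to get upper semicontinuity in $\rho$ from the fact that, for each \emph{fixed} $X$, the function $\rho\mapsto\pa\alpha_{\rho,e^X}(a)-a\pa_\rho$ is continuous (it is, since $w\mapsto\alpha_{\rho_w,e^X}(a)-a$ is a continuous section of the field of Proposition~\ref{cont field:prop}). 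But a supremum of continuous functions over a non-compact index set is only lower semicontinuous in general, and here the index set $\fg\setminus\{0\}$ is genuinely non-compact in the relevant way: after normalizing, the supremum over the scaling parameter $\nu>0$ is attained only in the limit $\nu\to 0^+$ (this is exactly the content of Lemma~\ref{derivation via length:lemma}), so $L''_\rho(a)$ is a pointwise limit of continuous functions of $\rho$, which need not be upper semicontinuous. The ``standard interchange-of-sup argument'' you invoke to ``close the gap'' does not exist without some uniformity in $X$, and you supply none. Note also that Lemma~\ref{lower semicontinuity:lemma}, which you cite for the easy half, is stated by the author precisely as a \emph{lower} semicontinuity result for this reason.

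The paper's proof avoids the difference quotients entirely and works with the infinitesimal form: $L_\rho(a)=\sup_{\pa X\pa=1}\pa\sum_{s\in F}\alpha_X(a_s)\pa_\rho$. Since $\alpha$ commutes with $\beta$, each $\alpha_X(a_s)$ lies in $B_s$, hence $\sum_{s\in F}\alpha_X(a_s)$ is a fixed element of $\oplus_{s\in F}B_s$ whose $\rho$-norm is continuous by Proposition~\ref{cont field:prop}; linearity of $X\mapsto\alpha_X(a_s)$ on the finite-dimensional space $\fg$ upgrades this to joint continuity of $(X,\rho)\mapsto\pa\sum_{s\in F}\alpha_X(a_s)\pa_\rho$, and then the supremum is over the \emph{compact} unit sphere, so it is continuous in $\rho$. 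That compactness is exactly the uniformity your argument is missing; if you replace $L''_\rho$ by this derivative expression your outline essentially becomes the paper's proof.
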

\begin{proof}
Say, $a=\sum_{s\in F}a_s$ for some finite subset $F$ of $\hat{K}$ and $a_s\in B_s\cap C^1_{0, 1}(G/\Gamma)$ for
each $s\in F$. Then $L_{\rho}(a)=\sup_{X\in \fg, \pa X\pa=1}\pa \sum_{s\in F}\alpha_X(a_s)\pa_{\rho}$ for each $\rho\in \Omega$.
Since $\alpha$ commutes with $\beta$, we have $\alpha_X(a_s)\in B_s$. By Proposition~\ref{cont field:prop}
the function $\rho\mapsto \pa \sum_{s\in F}\alpha_X(a_s)\pa_{\rho}$ is continuous on $\Omega$ for each $X\in \fg$.
Since $\fg$ is a finite-dimensional vector space and $\alpha_X(a_s)$ depends on $X$ linearly,
it follows easily that the function $(X, \rho)\mapsto \pa \sum_{s\in F}\alpha_X(a_s)\pa_{\rho}$ is continuous
on $\fg\times \Omega$. As the unit sphere of $\fg$ is compact, one concludes that the function
$\rho \mapsto \sup_{X\in \fg, \pa X\pa=1}\pa \sum_{s\in F}\alpha_X(a_s)\pa_{\rho}$ is continuous on $\Omega$.
\end{proof}

Fix $\rho'\in \Omega$. Let $Z$ be a compact neighborhood of $\rho'$ in $\Omega$.

Note that the linear span of $\rho\mapsto f(\rho)a\in C^*(\hat{G}/\Gamma, \rho)$
for $a$ in some $B_s$ and $f\in C(Z)$ is dense in
the $C^*$-algebra of continuous sections of the continuous field over $Z$ in Proposition~\ref{cont field:prop}.
Since $Z$ is a compact metrizable space, $C(Z)$ is separable.
As $G$ is a Lie group, it is separable. Then $G/\Gamma$ is separable, and hence is a compact metrizable space.
Thus $C(G/\Gamma)$ is separable, and hence $B_s$ is separable for each $s\in \hat{K}$. On the other hand,
since $\hat{K}$ is finitely generated, $\hat{K}$ is countable.
Therefore the $C^*$-algebra of continuous sections of the continuous field over $Z$ in Proposition~\ref{cont field:prop}
is separable.

By Proposition~\ref{nuclear:prop} each $C^*(\hat{G}/\Gamma, \rho)$ is nuclear.
Every separable continuous field of unital nuclear $C^*$-algebras over a compact metric space can be subtrivialized \cite[Theorem 3.2]{Blanchard}.
Thus we can find a unital $C^*$-algebra $B$ and unital embeddings $C^*(\hat{G}/\Gamma, \rho)\rightarrow B$ for
all $\rho \in Z$ such that, via identifying each $C^*(\hat{G}/\Gamma, \rho)$ with its image in $B$, the continuous sections of
the continuous field over $Z$ in Proposition~\ref{cont field:prop} are exactly the continuous maps $Z\rightarrow B$ whose images at each $\rho$ are in
$C^*(\hat{G}/\Gamma, \rho)$.

For any $C^*$-algebraic compact quantum metric space $(A, L_A)$ and any constant $R$ no less than the radius of $(A, L_A)$,
the set  $D_R(A):=\{a\in A_{\sa}|L_A(a)\le 1, \pa a\pa \le R\}$
is totally bounded and every $a\in \cE(A)$ can be written as $x+\lambda$ for some $x\in D_R(A)$ and $\lambda \in \Rb$
\cite[Proposition 1.6, Theorem 1.9]{Rieffel98}.
In Section~\ref{Lip:sec} we have seen that the conditions in Theorem~\ref{action:thm}
hold for $(A, L, H, H_0, \sigma)=(C^*(\hat{G}/\Gamma, \rho), L_{\rho}, K, K_0, \beta)$
with some $C, C_s$ and $(\cA, L_{\cA})$ not depending on $\rho$. Thus, by
Theorem~\ref{action:thm} there is some constant $R$ such that
the radius of $(C^*(\hat{G}/\Gamma, \rho_{\rho}), L_{\rho})$ is no bigger than $R$ for all $\rho \in \Omega$.
For any $\varepsilon>0$, by Lemmas~\ref{finite approx:lemma} and \ref{cont:lemma} there is a finite subset $F\subseteq \hat{K}$ satisfying that for
any $\rho \in \Omega$ and any $x\in \cE(C^*(\hat{G}/\Gamma, \rho))$ there is some $y\in \cE(C^*(\hat{G}/\Gamma, \rho))\cap \sum_{s\in F}B_s$
with $\pa y\pa_{\rho}\le \pa x\pa_{\rho}$ and $\pa x-y\pa_{\rho}<\varepsilon$.

\begin{lemma} \label{convergence1:lemma}
Let  $\varepsilon>0$. Then there is a neighborhood $U$ of $\rho'$ in $Z$ such that
for any $\rho \in U$ and any $a\in \cE(C^*(\hat{G}/\Gamma, \rho'))$ there is some
$b\in \cE(C^*(\hat{G}/\Gamma, \rho))$ with $\pa a-b\pa_B<\varepsilon$.
\end{lemma}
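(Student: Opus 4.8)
The plan is to combine a uniform (in $\rho$ and in the element) finite-dimensional approximation with the continuous field structure of Proposition~\ref{cont field:prop}, realized inside the $C^*$-algebra $B$. Throughout, let $R$ be the common upper bound for the radii of all $(C^*(\hat{G}/\Gamma, \rho), L_{\rho})$ furnished by Theorem~\ref{action:thm}, and for $w\in\oplus_{s\in\hat{K}}B_s$ write $\iota_{\rho}(w)\in B$ for the image of $w$ under $C^*(\hat{G}/\Gamma, \rho)\hookrightarrow B$; by the choice of $B$ the section $\rho\mapsto\iota_{\rho}(w)$ is continuous and $\pa\iota_{\rho}(w)\pa_B=\pa w\pa_{\rho}$. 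Since every element of $\cE(C^*(\hat{G}/\Gamma, \rho'))$ can be written $a_0+\lambda 1$ with $a_0\in D_R(C^*(\hat{G}/\Gamma, \rho'))$, $\lambda\in\Rb$ (\cite[Proposition 1.6, Theorem 1.9]{Rieffel98}), and the unit is the same element of $B$ for every $\rho$, it suffices to produce, for $\rho$ near $\rho'$, a self-adjoint $b_0\in\cE(C^*(\hat{G}/\Gamma, \rho))$ with $\pa a_0-b_0\pa_B<\varepsilon$.

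First I would apply Lemma~\ref{finite approx:lemma} (its hypotheses (1), (2) hold for $(C^*(\hat{G}/\Gamma, \rho'), L_{\rho'}, K, K_0, \beta)$ with constants independent of $\rho$, by Lemmas~\ref{C:lemma} and \ref{character1:lemma}) to get a finite set $F=\bar{F}\subseteq\hat{K}$ depending only on $\ell$ and $\varepsilon$, and a self-adjoint $y\in\sum_{s\in F}B_s$ with $\pa y\pa_{\rho'}\le R$, $L_{\rho'}(y)\le1$, $\pa a_0-y\pa_{\rho'}<\varepsilon/6$. On $\sum_{s\in F}B_s$ the norms $\pa\cdot\pa_{\rho}$ and $\pa\cdot\pa_{\infty,1}=\sum_{s\in F}\pa(\cdot)_s\pa_{\infty}$ are equivalent with constants independent of $\rho$ (Lemma~\ref{cont:lemma}); in particular $y\in C^1(\hat{G}/\Gamma, \rho')$, each component satisfies $L_{\rho'}(y_s)\le1$ and $\pa y_s\pa_{\infty}\le R$ (Lemma~\ref{character1:lemma}), and $L_{\rho}(y)\le|F|$ for every $\rho$. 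Thus, as $a_0$ ranges over $D_R(C^*(\hat{G}/\Gamma, \rho'))$, the element $y$ stays in a fixed set $D\subseteq\sum_{s\in F}B_s$ which is totally bounded for $\pa\cdot\pa_{\infty,1}$, being contained in a finite product of sets totally bounded by the Arzel\`a--Ascoli argument used for condition (3) in Section~\ref{Lip:sec}.

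Next comes the key smoothing step. Apply Lemma~\ref{approximate:lemma} to the action $\alpha$ on $C^*(\hat{G}/\Gamma, \rho')$, with element $y$ and parameter $\varepsilon/6$: using (\ref{L-length:eq}) and $y\in C^1(\hat{G}/\Gamma, \rho')$ this gives a self-adjoint $z$, smooth for $\alpha$, with $\pa z\pa_{\rho'}\le R$, $\pa z-y\pa_{\rho'}\le\varepsilon/6$ and $L_{\rho'}(z)=\sup_{0\neq X}\pa\alpha_X(z)\pa_{\rho'}/\pa X\pa\le L_{\rho'}(y)\le1$; from the proof, $z=(1+\delta_1)^{-1}\alpha_{\varphi}(y)$ for some nonnegative $\varphi\in C^\infty_c(G)$ supported near $e_G$ and some small $\delta_1\ge0$. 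Since $\alpha$ commutes with $\beta$, $\alpha_{\varphi}$ preserves each $B_s$, so $z\in\sum_{s\in F}B_s$, hence $z\in\oplus_{s\in F}(B_s\cap C^1_{0,1}(G/\Gamma))$ and $z\in C^1(\hat{G}/\Gamma, \rho)$ for all $\rho\in Z$ by the norm equivalence above. The crucial point is that, by a left-invariant change of variables, $\alpha_X(z)=\alpha_{\psi_X}(y)$ for a function $\psi_X\in C^\infty_c(G)$ depending linearly and continuously on $X\in\fg$; thus $\alpha_X(z)$ is produced from $y$ by convolution with a fixed smooth kernel rather than by differentiating $y$, and consequently $(y,X)\mapsto\alpha_{\psi_X}(y)=\sum_{s\in F}\alpha_{\psi_X}(y_s)$ is Lipschitz from $D\times\{X\in\fg:\pa X\pa=1\}$ into $(\oplus_{s\in F}B_s,\pa\cdot\pa_{\infty,1})$ with constant depending only on $\varphi$, $R$, $|F|$.

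Finally I would feed this into the continuous field. For $w\in\oplus_{s\in F}B_s$, $\rho\mapsto\pa\sum_{s\in F}w_s\pa_{\rho}=\pa\iota_{\rho}(w)\pa_B$ is continuous (Proposition~\ref{cont field:prop}, as $\alpha_{\psi_X}(y_s)\in B_s$) and Lipschitz in $w$ for $\pa\cdot\pa_{\infty,1}$ with constant $1$, uniformly in $\rho$; likewise $\rho\mapsto\iota_{\rho}(w)\in B$. Combining these with the Lipschitz dependence of $\alpha_{\psi_X}(y)$ on $(y,X)$, total boundedness of $D$, and compactness of the unit sphere of $\fg$, a routine finite-net argument gives: for every $\delta>0$ there is a neighborhood $U$ of $\rho'$ in $Z$ so that for all $\rho\in U$, uniformly over $y\in D$, $|L_{\rho}(z)-L_{\rho'}(z)|<\delta$ (here $L_{\rho}(z)=\sup_{\pa X\pa=1}\pa\alpha_{\psi_X}(y)\pa_{\rho}$) and $\pa\iota_{\rho}(z)-\iota_{\rho'}(z)\pa_B<\delta$. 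Taking $\delta<\min\{\varepsilon/6,\varepsilon/(6|F|R)\}$ and $b_0=(1+\delta)^{-1}\iota_{\rho}(z)$, we get $b_0=b_0^*$, $L_{\rho}(b_0)\le1$, so $b_0\in\cE(C^*(\hat{G}/\Gamma, \rho))$, and $\pa\iota_{\rho}(z)-b_0\pa_B\le\delta\pa z\pa_{\rho}\le\delta|F|R<\varepsilon/6$. Then
$$\pa a_0-b_0\pa_B\le\pa a_0-y\pa_{\rho'}+\pa y-z\pa_{\rho'}+\pa\iota_{\rho'}(z)-\iota_{\rho}(z)\pa_B+\pa\iota_{\rho}(z)-b_0\pa_B<\varepsilon,$$
and $b=b_0+\lambda 1$ works for the original $a$. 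I expect the main obstacle to be exactly what the smoothing resolves: $L_{\rho}$ is not continuous for the $C^*$-norm, so the pointwise continuity of $\rho\mapsto L_{\rho}(a)$ from Lemma~\ref{cont L:lemma} cannot be upgraded directly to something uniform over the infinite-dimensional set $D$; replacing $y$ by $\alpha_{\varphi}(y)$ turns the unbounded operators $\alpha_X$ into convolution by fixed smooth kernels, restoring the equicontinuity needed for the finite-net argument.
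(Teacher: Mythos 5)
Your argument is correct in substance, but it routes the key uniformity step differently from the paper, and at greater cost. The paper also begins with Lemma~\ref{finite approx:lemma} to land in $\sum_{s\in F}B_s$ with norm at most $R$, but then immediately replaces the resulting totally bounded set of approximants by a \emph{finite} $\varepsilon$-net $Y$; for each of the finitely many $z\in Y$ one has $L_{\rho'}(z)<\infty$, hence $z\in\oplus_{s\in F}(B_s\cap C^1_{0,1}(G/\Gamma))$, so Lemma~\ref{cont L:lemma} gives continuity of $\rho\mapsto L_{\rho}(z)$ and Proposition~\ref{cont field:prop} gives continuity of $\rho\mapsto z_{\rho}\in B$; finiteness of $Y$ then yields a single neighborhood $U$ and the same rescaling $z_{\rho}/(1+\delta)$ that you use. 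In particular, the difficulty you identify --- that pointwise continuity of $\rho\mapsto L_{\rho}(\cdot)$ cannot be upgraded to uniformity over the infinite set $D$ --- does not arise in the paper, because uniformity is only ever needed over a finite net; your smoothing step (replacing $y$ by $\alpha_{\varphi}(y)$ so that $\alpha_X$ becomes convolution against the fixed kernels $\psi_X$, then running finite-net arguments on the norm functions $\rho\mapsto\pa\cdot\pa_{\rho}$) is a legitimate but unnecessary detour, whose payoff is a genuinely stronger intermediate statement, namely uniform convergence of $L_{\rho}$ to $L_{\rho'}$ on the whole family $\{z_y\}_{y\in D}$. If you keep your route, one point must be made explicit: you need a \emph{single} mollifier $\varphi$ (and a single $\delta_1$) serving every $y\in D$ simultaneously, i.e.\ $\pa \alpha_{\varphi}(y)-y\pa$ small uniformly over $y\in D$; this does hold because $D$ is totally bounded and $\alpha$ acts isometrically, so the orbit maps $x\mapsto\alpha_x(y)$, $y\in D$, are equicontinuous at $e_G$, but as written Lemma~\ref{approximate:lemma} is invoked for one $y$ at a time and produces a $\varphi$ depending on $y$, which would break the later Lipschitz estimate for $(y,X)\mapsto\alpha_{\psi_X}(y)$. (Cosmetically, $L_{\rho}(z)$ should be $\sup_{\pa X\pa=1}\pa\alpha_{\psi_X}(y)\pa_{\rho}/(1+\delta_1)$, with the normalizing factor included.)
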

\begin{proof}
According to the discussion above we can find a finite subset $Y$ of $\cE(C^*(\hat{G}/\Gamma, \rho'))\cap \sum_{s\in F}B_s$ such that
for every $a\in  \cE(C^*(\hat{G}/\Gamma, \rho'))$ there are some $z\in Y$ and $\lambda \in \Rb$
with $\pa a-(z+\lambda)\pa_{\rho'}<\varepsilon$. For each $y\in Y$, write $y$ as $\sum_{s\in F}y_s$ with $y_s\in B_s$.
Since $L_{\rho'}(y)<\infty$, $y$ is once differentiable with respect to $\alpha_{\rho'}$.
It is easy to see that each $y_s$ is once differentiable with respect to $\alpha_{\rho'}$.
Thus, by Lemma~\ref{cont L:lemma} the function $\rho\mapsto L_{\rho}(y)$ is continuous on $\Omega$.
Then we can find a constant $\delta>0$  and a neighborhood $U$ of $\rho'$ in $Z$
such that $\delta\pa y_{\rho}\pa_{\rho}<\varepsilon$, $\pa y_{\rho'}-y_{\rho}\pa_B<\varepsilon$, and $L_{\rho}(y_{\rho})< 1+\delta$ for all $y\in Y$ and $\rho\in U$,
where $y_{\rho}$ denotes $y$ as an element in $C^*(\hat{G}/\Gamma, \rho)$.
Fix $\rho\in U$. Set $b=z_{\rho}/(1+\delta)$. Then $L_{\rho}(b+\lambda)=L_{\rho}(b)<1$, and
\begin{eqnarray*}
\pa a-(b+\lambda)\pa_B&\le &\pa a-(z_{\rho'}+\lambda)\pa_{\rho'}+\pa z_{\rho'}-z_{\rho}\pa_B+\pa z_{\rho}-b\pa_{\rho}\\
&<& \varepsilon+\varepsilon+ \varepsilon=3\varepsilon.
\end{eqnarray*}
\end{proof}

\begin{lemma} \label{convergence2:lemma}
Let $\varepsilon>0$. Then there is a neighborhood $U$ of $\rho'$ in $Z$ such that
for any $\rho \in U$ and any $a\in \cE(C^*(\hat{G}/\Gamma, \rho))$ there is some
$b\in \cE(C^*(\hat{G}/\Gamma, \rho'))$ with $\pa a-b\pa_B<\varepsilon$.
\end{lemma}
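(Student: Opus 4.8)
The plan is to follow the skeleton of the proof of Lemma~\ref{convergence1:lemma}, but to cope with the fact that now the element $a$ lives in the fibre over the \emph{varying} point $\rho$ — so no finite subset of $\cE(C^*(\hat G/\Gamma,\rho'))$ can be fixed in advance — by first pushing $a$ into a single, $\rho$-independent, $C^1$-precompact ``core'', transporting that core to $\rho'$ through the continuous field of Proposition~\ref{cont field:prop}, and finally scaling slightly to land back in $\cE$. Fix $\varepsilon>0$; it suffices to prove the statement with $\varepsilon$ replaced by $\varepsilon/C$ for a constant $C$ chosen at the end. By the proof of Theorem~\ref{Lip:thm} in Section~\ref{Lip:sec}, the radius of $(C^*(\hat G/\Gamma,\rho),L_\rho)$ is bounded by a constant $R$ independent of $\rho$, so by \cite[Proposition 1.6, Theorem 1.9]{Rieffel98} each $a\in\cE(C^*(\hat G/\Gamma,\rho))$ can be written $a=x+\lambda$ with $\lambda\in\Rb$ and $x\in D_R(C^*(\hat G/\Gamma,\rho))$. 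Fix once and for all a mollifier $\varphi\in C_c^{\infty}(G)$, $\varphi\ge 0$, $\int_G\varphi=1$, supported so close to the identity that Lemma~\ref{approximate:lemma} applied with it turns $x$ into a self-adjoint $\alpha$-smooth $z_0$ with $\pa z_0\pa_\rho\le\pa x\pa_\rho\le R$, $\pa x-z_0\pa_\rho\le\varepsilon$ and $L_\rho(z_0)\le L''_\rho(x)=L_\rho(x)\le 1$. Because $\alpha$ is the left-translation action and does not depend on $\rho$, differentiating $\alpha_g(x)$ against $\varphi$ under the integral gives a second-derivative bound $\sup_{\pa X\pa,\pa X'\pa\le 1}\pa\alpha_X\alpha_{X'}(z_0)\pa_\rho\le C_1$ with $C_1$ depending only on $\varphi$, $G$ and $R$, hence not on $\rho$.

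Next I would impose finite spectral support without leaving the core. Let $F=\bar F\subseteq\hat K$ be the finite set given by Lemma~\ref{finite approx:lemma} for $\varepsilon$ and the constant of Lemma~\ref{C:lemma}; applying Lemma~\ref{finite approx:lemma} to $z_0$ produces a self-adjoint $z\in\bigoplus_{s\in F}B_s$ with $\pa z\pa_\rho\le R$, $\pa z_0-z\pa_\rho\le\varepsilon$, $L_\rho(z)\le 1$. Since the construction there uses only averages $\beta_\psi$ over $K$ with $\psi$ ranging in a fixed family of character combinations of bounded $L^1$-norm, and $\beta$ commutes with $\alpha$ and $K$ does not depend on $\rho$, $z$ is again $\alpha$-smooth with $\sup_{\pa X\pa,\pa X'\pa\le 1}\pa\alpha_X\alpha_{X'}(z)\pa_\rho\le C_2$, $C_2$ independent of $\rho$. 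Passing to components $z_s=\int_K\overline{\left<k,s\right>}\beta_k(z)\,dk$ and using that on $B_s$ the norm is the supremum norm of a function on the compact manifold $G/\Gamma$ (Lemma~\ref{cont:lemma}), one gets $\sup_{\pa X\pa\le 1}\pa\alpha_X(z_s)\pa_\infty\le 1$, $\sup_{\pa X\pa,\pa X'\pa\le 1}\pa\alpha_X\alpha_{X'}(z_s)\pa_\infty\le C_2$, $\pa z_s\pa_\infty\le R$, all $\rho$-independent. Let $\mathcal W_s\subseteq B_s$ be the set of $v$ satisfying these three bounds; by the Arzela--Ascoli theorem \cite[Theorem VI.3.8]{Conway} $\mathcal W_s$ is precompact in the $C^1$-norm, equivalently in $\fp=L+\pa\cdot\pa$. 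Consequently the set $\mathcal W^{\sa}$ of self-adjoint $\sum_{s\in F}v_s$ with $v_s\in\mathcal W_s$ admits a finite subset $N\subseteq\mathcal W^{\sa}$ such that every element of $\mathcal W^{\sa}$ has each component within $\fp$-distance $\varepsilon$ of the corresponding component of some $w\in N$; and our $z$ lies in $\mathcal W^{\sa}$.

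Finally I would transport and scale. Each $w=\sum_{s\in F}w_s\in N$ is $\alpha$-smooth with $L_\rho(w)\le|F|$, so $\rho\mapsto L_\rho(w)$ is continuous by Lemma~\ref{cont L:lemma} and $\rho\mapsto w$ is a continuous section of the field of Proposition~\ref{cont field:prop}; as $N$ is finite, pick a neighbourhood $U$ of $\rho'$ in $Z$ with $|L_\rho(w)-L_{\rho'}(w)|<\varepsilon$ and $\pa w_\rho-w_{\rho'}\pa_B<\varepsilon$ for all $w\in N$, $\rho\in U$. Now fix $\rho\in U$ and $a=x+\lambda\in\cE(C^*(\hat G/\Gamma,\rho))$, build $z_0,z$ as above and choose $w\in N$ with $\fp(z_s-w_s)<\varepsilon$ for all $s\in F$. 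Then $\pa z-w\pa_\rho\le\sum_{s\in F}\pa z_s-w_s\pa_\infty<|F|\varepsilon$, and since $\pa\sum_{s}\alpha_X(u_s)\pa_\rho\le\sum_{s}\pa\alpha_X(u_s)\pa_\infty$ one also gets $L_\rho(z-w)<|F|\varepsilon$, so $L_\rho(w)<1+|F|\varepsilon$ and hence $L_{\rho'}(w)<1+\delta$ with $\delta:=(|F|+1)\varepsilon$. Set $b:=w_{\rho'}/(1+\delta)+\lambda$; this is self-adjoint (the involution formula is $\rho$-independent, $w$ is real-valued and $\lambda\in\Rb$) with $L_{\rho'}(b)=L_{\rho'}(w)/(1+\delta)<1$, so $b\in\cE(C^*(\hat G/\Gamma,\rho'))$, and
\[
\pa a-b\pa_B\le\pa x-z_0\pa_\rho+\pa z_0-z\pa_\rho+\pa z-w\pa_\rho+\pa w_\rho-w_{\rho'}\pa_B+\tfrac{\delta}{1+\delta}\pa w_{\rho'}\pa_{\rho'}\le(3+|F|)\varepsilon+\delta R,
\]
which is at most $C\varepsilon$ for $C=C(|F|,R)$; rescaling $\varepsilon$ at the outset completes the proof.

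The main obstacle — and the reason the smoothing step is indispensable — is that a bound $L_\rho(z)\le 1$ for an element $z$ that varies with $\rho$ gives no control on $L_{\rho'}(z)$: the derivations $\alpha_X$ are not $\pa\cdot\pa$-continuous, so a set that is merely norm-precompact cannot transfer a seminorm bound across the fibres of the continuous field. Forcing $z$ into a family with a uniform second-derivative bound makes that family precompact in the $C^1$-norm, and a finite $C^1$-net, combined with Lemma~\ref{cont L:lemma} and the continuous-field structure of Proposition~\ref{cont field:prop}, is exactly what allows $L_{\rho'}$ to be compared with $L_\rho$ uniformly over all the elements that can arise.
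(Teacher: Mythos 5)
Your argument is correct, but it is genuinely different from the one in the paper. The paper's proof is a compactness-by-contradiction argument: it reduces to elements of $\oplus_{s\in F}B_s$ with $\pa\cdot\pa_\rho\le R$, extracts from a putative bad sequence $a_n\in\cE(C^*(\hat{G}/\Gamma,\rho_n))$ a norm-convergent subsequence componentwise (using that both the norm and $L_\rho$ on each $B_s$ are $\rho$-independent and that $\{a\in B_s\mid L(a)\le 1,\ \pa a\pa\le R\}$ is totally bounded), transfers the Lip-norm bound to the limit via the lower semicontinuity of the difference-quotient seminorm $L''_\rho$ along continuous sections of the field (Lemma~\ref{lower semicontinuity:lemma}), and only then invokes Lemma~\ref{approximate:lemma} \emph{at} $\rho'$ to land in $\cE(C^*(\hat{G}/\Gamma,\rho'))$. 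You instead give a direct, quantitative argument: you mollify \emph{at} $\rho$ first, which costs nothing on $L_\rho$ but gains a uniform second-derivative bound (uniform because $\alpha$ is $\rho$-independent and $\alpha_X\alpha_{X'}(z_0)$ is obtained by differentiating the fixed mollifier; and the approximation error $\pa x-z_0\pa\le\varepsilon$ is uniform over $\cE$ because $L''_\rho(x)\le 1$ controls the modulus of continuity of the orbit map); this upgrades mere norm-precompactness to precompactness in the $C^1$-norm $\fp$, so a single finite $\fp$-net $N$, chosen independently of $\rho$, captures every element that can arise. You then only need continuity of $\rho\mapsto L_\rho(w)$ on the finitely many smooth, finitely spectrally supported $w\in N$ (Lemma~\ref{cont L:lemma}) plus a $(1+\delta)$-rescaling, and you avoid Lemma~\ref{lower semicontinuity:lemma} and the contradiction/subsequence step entirely. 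You correctly identify the crux — an $L_\rho$-bound on a $\rho$-dependent element gives no control on $L_{\rho'}$ — and your $C^2$-bound trick is a legitimate alternative to the paper's semicontinuity mechanism; what it buys is an explicit neighborhood $U$ and explicit constants, at the price of a longer construction. The only places where you are terse are (i) the justification that the fixed mollifier achieves $\pa x-z_0\pa\le\varepsilon$ uniformly (this needs the observation that $L''_\rho(x)\le 1$ gives a $\rho$- and $x$-independent modulus of continuity of $g\mapsto\alpha_g(x)$ near $e_G$), and (ii) the remark that the element produced by Lemma~\ref{finite approx:lemma} is $\beta_\varphi(z_0)$ for a single fixed nonnegative character combination with $\pa\varphi\pa_1=1$, so that the second-derivative bound survives; both are easily supplied and do not affect correctness.
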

\begin{proof} According to the discussion before Lemma~\ref{convergence1:lemma}, it suffices to
show that
there is a neighborhood $U$ of $\rho'$ in $Z$ such that
for any $\rho\in U$ and any $a\in \cE(C^*(\hat{G}/\Gamma, \rho))\cap \oplus_{s\in F}B_s$ satisfying $\pa a\pa_{\rho} \le R$ there is some
$b\in \cE(C^*(\hat{G}/\Gamma, \rho'))$ with $\pa a-b\pa_B<\varepsilon$.
Suppose that this fails. Then we can find
a sequence $\{\rho_n\}_{n\in \Nb}$ in $Z$ converging
to $\rho'$ and an $a_n\in \cE(C^*(\hat{G}/\Gamma, \rho_n))\cap \oplus_{s\in F}B_s$ satisfying $\pa a_n\pa_{\rho_n} \le R$ for each $n\in \Nb$
such that $\pa a_n-b\pa_B\ge \varepsilon$ for all $n\in \Nb$ and $b\in  \cE(C^*(\hat{G}/\Gamma, \rho'))$.
Write $a_n$ as $\sum_{s\in F}a_{n, s}$ with $a_{n, s}\in B_s$. Then $a_{n, s}=\int_K\overline{\left<k, s\right>}\beta_{\rho_n, k}(a_n)\, dk$.
Thus $\pa a_{n, s}\pa_{\rho_n}\le \pa a_n\pa_{\rho_n}\le R$ and $L_{\rho_n}(a_{n, s})\le L_{\rho_n}(a_n)\le 1$
by Lemma~\ref{character1:lemma}.
Since the restriction of $L_{\rho}$ on $B_s$ does not depend on $\rho$, and the set
$\{a\in B_s| L_{\rho}(a)\le 1, \pa a\pa\le R\}$ is totally bounded,
passing to a subsequence
if necessary, we may assume that $a_{n, s}$ converges to some $a_s$ in $B_s$ when $n\to \infty$ for each $s\in F$.
Set $a=\sum_{s\in F}a_s$.  Then $(a_n)_{\rho_n}$ converges to $a_{\rho'}$ in $B$ as $n\to \infty$, where $(a_n)_{\rho_n}$ and $a_{\rho'}$ denote
$a_n$ and $a$ as elements in $C^*(\hat{G}/\Gamma, \rho_n)$ and $C^*(\hat{G}/\Gamma, \rho')$ respectively.
In particular, $a$ is self-adjoint and $\pa a\pa_{\rho'} \le \lim_{n\to \infty}\pa a_n\pa_{\rho_n}\le R$.

By Lemma~\ref{derivation via length:lemma} we have $L''_{\rho_n}(a_n)=L_{\rho_n}(a_n)\le 1$ for all $n\in \Nb$.
On the one-point compactification $W=\Nb\cup\{\infty\}$ of $\Nb$, consider the continuous map $W\rightarrow \Omega$
sending $n\in \Nb$ to $\rho_n$ and $\infty$ to $\rho'$.
Then the section $f$ defined as $f_n=a_n\in C^*(\hat{G}/\Gamma, \rho_n)$ for $n\in \Nb$
and $f_{\infty}=a\in C^*(\hat{G}/\Gamma, \rho')$  is a continuous section of the continuous field on $W$ in Proposition~\ref{cont field:prop}.      Thus, by Lemma~\ref{lower semicontinuity:lemma} we have $L''_{\rho'}(a)\le \liminf_{n\to \infty}L''_{\rho_n}(a_n)\le 1$.
By Lemma~\ref{approximate:lemma} we can find some self-adjoint $b\in C^1(\hat{G}/\Gamma, \rho')$ with
$\pa b\pa_{\rho'} \le \pa a\pa_{\rho'}\le R$, $\pa b-a\pa_{\rho'}\le \varepsilon/2$, and $L_{\rho'}(b)\le L''_{\rho'}(a)\le 1$.
Then $b\in \cE(C^*(\hat{G}/\Gamma, \rho'))$, and
$$\pa b-a_n\pa_B
\to \pa b-a\pa_{\rho'} \le \varepsilon/2$$
as $n\to \infty$.
Therefore, when $n$ is large enough, we have $\pa b-a_n\pa_B<\varepsilon$, contradicting
our assumption. This finishes the proof of the lemma.
\end{proof}

From Lemmas~\ref{convergence1:lemma} and \ref{convergence2:lemma} we conclude that Theorem~\ref{continuity:thm} holds.

\appendix
\section{Comparison of $\distnu$ and $\prox$} \label{comparison:appendix}

In this appendix we compare the distance $\distnu$ and the proximity Rieffel introduced
in \cite{Rieffel08}.

A (possibly $+\infty$-valued) seminorm $L$ on a unital (possibly incomplete) $C^*$-norm algebra $A$
is called a {\it $C^*$-metric} \cite[Definition 4.1]{Rieffel08} if
\begin{enumerate}
\item $L$ is lower semi-continuous,  satisfies the reality condition (\ref{real:eq}),
and is {\it strongly-Leibniz} in the sense that $L(ab)\le L(a)\pa b\pa +\pa a\pa L(b)$
for all $a, b\in A$, $L(1_A)=0$,  and $L(a^{-1})\le \pa a^{-1}\pa^2L(a)$ for all
$a$ being invertible in $A$,
\item $L$ extended to  the completion $\bar{A}$ of $A$ by $L(a)=\infty$ for $a\in \bar{A}\setminus A$
is a Lip-norm on $\bar{A}$,
\item  the algebra $\{a\in A| L(a)<\infty\}$ is spectrally stable in $\bar{A}$.
\end{enumerate}
In such case, the pair $(A, L)$ is called a {\it compact $C^*$-metric space}.

The seminorm $L_{\rho}$ in Theorem~\ref{Lip:thm} may fail to be a $C^*$-metric
since it may fail to be lower semi-continuous. However, it is lower semi-continuous
on $C^1(\hat{G}/\Gamma, \rho)$ by Lemma~\ref{derivation via length:lemma}. Thus its restriction on the algebra of
smooth elements in $C^*(\hat{G}/\Gamma, \rho)$ with respect to $\alpha$ is a $C^*$-metric.
By \cite[Proposition 3.2]{Rieffel08} its closure $\bar{L}_{\rho}$ is
a $C^*$-metric on $C^*(\hat{G}/\Gamma, \rho)$. Lemma~\ref{approximate:lemma} tells us
that
$$ \bar{L}_{\rho}(a)=\sup_{0\neq X\in \fg}\frac{\pa \alpha_{e^X}(a)-a\pa}{\pa X\pa}$$
for all $a\in C^*(\hat{G}/\Gamma, \rho)$.

In \cite[Definition 5.6, Section 14]{Rieffel08} Rieffel introduced the notions
of {\it proximity} $\prox(A, B)$ and {\it complete proximity}
$\prox_s(A, B)$ between two compact $C^*$-metric spaces $(A, L_A)$ and $(B, L_B)$.
In general, one has $\prox_s(A, B)\ge \prox(A, B)$. For each $q\in \Nb$, denote
by $\UCP_q(A)$ the set of unital completely positive linear maps from
the completion $\bar{A}$ of $A$ to $M_q(\Cb)$. Define
$\prox^q(A, B)$ as the infimum of the Hausdorff distance
of $\UCP_q(A)$ and $\UCP_q(B)$ in $\UCP_q(A\oplus B)$ under
the metric $d^q_L$,  for $L$ running through $C^*$-metrics $L$
on $A\oplus B$ whose quotients on $A$ and $B$ agree with
$L_A$ and $L_B$ on $A_{\sa}$ and $B_{\sa}$ respectively.
Here the metric $d^q_L$ is defined
as
$$ d^q_L(\varphi, \psi)=\sup_{L(a, b)\le 1}\pa \varphi(a, b)-\psi(a, b)\pa.$$
Then $\prox_s(A, B)$ is defined as $\sup_{q}\prox^q(A, B)$.

Note that
the definition of $\distnu$ extends to  compact $C^*$-metric spaces $(A, L_A)$ and $(B, L_B)$
directly.

\begin{theorem} \label{compare:thm}
For any compact $C^*$-metric spaces $(A, L_A)$ and $(B, L_B)$, one
has
$$ \distnu(A, B)\ge \prox_s(A, B).$$
\end{theorem}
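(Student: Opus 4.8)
The plan is to prove that $\prox_s(A,B)\le r$ for every $r>\distnu(A,B)$ and then let $r$ decrease to $\distnu(A,B)$ (the case $\distnu(A,B)=\infty$ being vacuous). So I would fix such an $r$, use the definition of $\distnu$ to choose a unital $C^*$-algebra $C$ together with unital isometric $*$-homomorphisms $A\hookrightarrow C$ and $B\hookrightarrow C$ --- along which $A$, $B$, their completions $\bar A,\bar B$, and the sets $\cE(A),\cE(B)$ are henceforth regarded as sitting inside $C$ --- such that $\dist^C_{\rH}(\cE(A),\cE(B))<r$, and then on the unital $C^*$-norm algebra $A\oplus B$ define
\[
 L(a,b)=\max\Bigl\{L_A(a),\ L_B(b),\ \tfrac1r\|a-b\|_C\Bigr\}.
\]
The goal is then to verify three things: (i) $L$ is a $C^*$-metric on $A\oplus B$; (ii) the quotient of $L$ onto $A$ (resp.\ onto $B$) agrees with $L_A$ on $A_{\sa}$ (resp.\ with $L_B$ on $B_{\sa}$); and (iii) for every $q\in\Nb$, inside $\UCP_q(A\oplus B)$ every element of $\UCP_q(A)$ lies within $d^q_L$-distance $r$ of $\UCP_q(B)$, and vice versa. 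Granting (i)--(iii), $L$ is admissible in the definition of $\prox^q(A,B)$, so $\prox^q(A,B)\le r$ for all $q$, whence $\prox_s(A,B)=\sup_q\prox^q(A,B)\le r$.

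For (i) I would check that $L$ vanishes at $(1_A,1_B)$, satisfies the reality condition (because $L_A,L_B$ do and $\|\cdot\|_C$ is $*$-invariant), and is lower semi-continuous as a maximum of the lower semi-continuous seminorms $L_A,L_B$ and the continuous seminorm $(a,b)\mapsto\|a-b\|_C$. That last seminorm is strongly-Leibniz, since $\|aa'-bb'\|_C\le\|a\|\,\|a'-b'\|_C+\|a-b\|_C\,\|b'\|$ and $\|a^{-1}-b^{-1}\|_C=\|a^{-1}(b-a)b^{-1}\|_C\le\|(a,b)^{-1}\|^2\|a-b\|_C$; as a maximum of finitely many strongly-Leibniz seminorms is strongly-Leibniz, so is $L$. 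For the Lip-norm requirement I would invoke Rieffel's criterion \cite[Proposition 1.6, Theorem 1.9]{Rieffel98}, observing that $\{(a,b)\in(\bar A\oplus\bar B)_{\sa}:L(a,b)\le1,\ \|(a,b)\|\le R\}$ is contained in the product of the corresponding (totally bounded) balls for the compact $C^*$-metric spaces $(A,L_A)$ and $(B,L_B)$, and that a scalar realizing the radius of $A$ realizes, up to the additive constant $r$, that of $A\oplus B$; hence the $+\infty$-extension of $L$ off $A\oplus B$ is a Lip-norm on $\bar A\oplus\bar B$. Spectral stability is clear: $\{(a,b):L(a,b)<\infty\}=A_0\oplus B_0$ with $A_0=\{a\in A:L_A(a)<\infty\}$ and $B_0=\{b\in B:L_B(b)<\infty\}$ spectrally stable in $\bar A$ and $\bar B$, and a direct sum of spectrally stable subalgebras is spectrally stable.

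For (ii), by symmetry I would only treat $A$. The inequality $L(a,b)\ge L_A(a)$ forces the quotient onto $A$ to dominate $L_A$. For the reverse, let $a\in A_{\sa}$: if $a\in\Cb 1$ take $b=a$; if $L_A(a)=\infty$ there is nothing to do; otherwise set $t=L_A(a)\in(0,\infty)$, note $a/t\in\cE(A)$, pick $b'\in\cE(B)$ with $\|a/t-b'\|_C<r$, and check that $b:=tb'\in B_{\sa}$ has $L_B(b)=tL_B(b')\le t$ and $\tfrac1r\|a-b\|_C=\tfrac tr\|a/t-b'\|_C<t$, so $L(a,b)\le t=L_A(a)$. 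For (iii), again by symmetry it suffices to handle $\varphi\in\UCP_q(A)$: by Arveson's extension theorem \cite{BO} the unital completely positive map $\varphi\colon\bar A\to M_q(\Cb)$ extends to a unital completely positive $\tilde\varphi\colon C\to M_q(\Cb)$, and $\psi:=\tilde\varphi|_{\bar B}\in\UCP_q(B)$. Viewing $\varphi,\psi$ in $\UCP_q(A\oplus B)$ via $(a,b)\mapsto\varphi(a)$ and $(a,b)\mapsto\psi(b)$, whenever $L(a,b)\le1$ one has $\|a-b\|_C\le r$, so
\[
 \|\varphi(a)-\psi(b)\|=\|\tilde\varphi(a-b)\|\le\|a-b\|_C\le r,
\]
i.e.\ $d^q_L(\varphi,\psi)\le r$. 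This completes the argument.

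I expect the main obstacle to be step (i): confirming that the patched seminorm $L$ is genuinely a $C^*$-metric --- especially that its $+\infty$-extension to the completion is a bona fide Lip-norm (finite radius, totally bounded unit ball) and that strong Leibniz-ness and spectral stability survive the construction --- rather than merely a lower semi-continuous seminorm with the prescribed quotients. Once $L$ is known to be admissible, step (ii) is the familiar scaling argument powered by the bound $\dist^C_{\rH}(\cE(A),\cE(B))<r$, and step (iii) is immediate from Arveson's extension theorem together with the contractivity of unital completely positive maps.
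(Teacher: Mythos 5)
Your proposal is correct and follows essentially the same route as the paper: the same bridge seminorm $L(a,b)=\max\bigl(L_A(a),L_B(b),\|a-b\|/r\bigr)$ on $A\oplus B$ (the paper uses $r=c+\varepsilon$ with $c$ the Hausdorff distance), the same verification that it is an admissible $C^*$-metric with the correct quotients, and the same use of Arveson's extension theorem to pair off unital completely positive maps. The only difference is that you spell out the Lip-norm, strong-Leibniz, and quotient checks that the paper delegates to the proof of \cite[Proposition 4.7]{Li06}.
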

\begin{proof} The proof is similar to those of \cite[Proposition 4.7]{Li06}
and \cite[Theorem 3.7]{KL}. Let $\cA$ be a unital $C^*$-algebra containing
$\bar{A}$ and $\bar{B}$. Set $c=\dist^{\cA}_{\rH}(\cE(A), \cE(B))$.
Let $\varepsilon>0$. Define a seminorm $L$ on $A\oplus B$ by
$$ L(a, b)=\max(L_A(a), L_B(b), \frac{\pa a-b\pa}{c+\varepsilon}).$$
It was pointed in the proof of \cite[Proposition 4.7]{Li06} that $L$ extended to
$\overline{A\oplus B}=\bar{A}\oplus \bar{B}$ as in the condition (2) of the definition of $C^*$-metrics above
is a Lip-norm, and that the quotients of $L$ on $A$ and $B$ agree with $L_A$ and $L_B$ on
$A_{\sa}$ and $B_{\sa}$ respectively. It is readily checked that $L$ satisfies the conditions (1) and (3)
in the definition of $C^*$-metrics. Thus $L$ is a $C^*$-metric on $A\oplus B$.
For any $q\in \Nb$ and $\varphi\in \UCP_q(A)$, by Arveson's extension theorem \cite[Theorem 1.6.1]{BO} extend $\varphi$ to
a $\phi$ in $\UCP_q(\cA)$. Set $\psi$ to be the restriction of $\phi$ on $\bar{B}$.
For any $(a, b)\in \cE(A\oplus B)$ one has
$$ \pa \varphi(a, b)-\psi(a, b)\pa =\pa \varphi(a)-\psi(b)\pa=\pa \phi(a-b)\pa\le \pa a-b\pa\le c+\varepsilon.$$
Thus $d^q_L(\varphi, \psi)\le c+\varepsilon$. Similarly, for any $\psi'\in \UCP_q(B)$, we can
find some $\varphi'\in \UCP_q(A)$ with $d^q_L(\varphi', \psi')\le c+\varepsilon$. Therefore
$\prox^q(A, B)\le c+\varepsilon$. It follows that $\prox^q(A, B)\le \distnu(A, B)$, and hence $\prox_s(A, B)\le \distnu(A, B)$
as desired.
\end{proof}

It was pointed out in Section 5 of \cite{KL} that one has continuity
of quantum tori and $\theta$-deformation,
convergence of matrix algebras to integral coadjoint orbits of compact connected semisimple
Lie groups,
and approximation of quantum tori by finite quantum tori with respect to $\distnu$. It follows
from Theorem~\ref{compare:thm} that we also have such continuity, convergence and approximation
with respect to $\prox_s$ and $\prox$. In particular, this yields a new proof for
\cite[Theorem 14.1]{Rieffel08}.


\end{document}